\documentclass[12pt,reqno]{amsart}

\usepackage{a4wide}
\usepackage{amsmath}
\usepackage{psfrag}
\usepackage{etex,hyperref}
\usepackage{amsthm}
\usepackage{amssymb,amsxtra}
\usepackage{appendix}
\usepackage[usenames]{color}
\usepackage{stmaryrd}
\usepackage{mathrsfs}
\usepackage{upgreek}
\usepackage{ytableau}
\usepackage{arydshln}
\usepackage{comment}
\usepackage{tikz-cd}
\usepackage{cleveref}
\usepackage{dynkin-diagrams}
\usepackage{float}
\usepackage{paralist}
\usepackage{graphicx}
\usepackage[all]{xy}
\usepackage{tikz}
\usetikzlibrary{matrix,fit}
\usepackage{pdflscape}
\usepackage{geometry}
 \allowdisplaybreaks

\newcommand{\NN}{\mathbb{N}}
\newcommand{\Z}{\mathbb{Z}}
\newcommand{\wt}{\widetilde}
\newcommand{\cS}{\mathcal{S}}

\newcommand{\sym}[1]{\mathfrak{S}_{#1}}

\newcommand{\Proj}{\mathsf{P}}

\renewcommand{\P}{\mathscr{P}}

\newcommand{\ccl}{\mathscr{C}}

\newcommand{\B}{\mathbb{B}}
\newcommand{\D}{\mathbb{D}}
\newcommand{\R}{\mathscr{R}}

\newcommand{\F}{{F}}

\newcommand{\FF}{\mathbb{F}}

\newcommand{\E}{\mathbb{E}}
\newcommand{\N}{\mathrm{N}}

\newcommand{\A}{\mathbb{A}}
\newcommand{\HH}{\mathbb{H}}
\newcommand{\I}{\mathbb{I}}

\newcommand{\Ext}{\mathsf{Ext}}

\newcommand{\modcat}{\textsf{mod} }
\newcommand{\Span}{\mathsf{Span}}

\newcommand{\cont}{\text{{\tiny $\#$}}}

\newcommand{\Q}{\mathbb{Q}}

\newcommand{\Fp}{\mathbb{F}_p}

\newcommand{\am}{n}

\newcommand{\Rad}{{\mathsf{Rad}}}
\newcommand{\Des}{\mathscr{D}}

\newcommand{\Hom}{{\mathsf{Hom}}}

\newcommand{\Ker}{\mathsf{ker}}
\newcommand{\soc}{\mathsf{Soc}}

\renewcommand{\c}{c}

\renewcommand{\sb}{\scalebox{.8}{$\bullet$}}

\renewcommand{\u}{t_0}
\renewcommand{\t}{t}

\newcommand{\red}{\color{red}}
\newcommand{\cO}{\mathcal{O}}

\theoremstyle{definition}

\newtheorem{theorem}{Theorem}[section]
\newtheorem{lemma}[theorem]{Lemma}

\newtheorem{proposition}[theorem]{Proposition}
\newtheorem{corollary}[theorem]{Corollary}
\newtheorem{remark}[theorem]{Remark}
\newtheorem{conjecture}[theorem]{Conjecture}

\newtheorem{example}[theorem]{Example}

\newtheorem*{main}{Theorem}

\numberwithin{equation}{section}

\begin{document}
\title{The Representation Type of the Descent Algebras}

\author{Karin Erdmann}
\address[K. Erdmann]{Mathematical Institute, University of Oxford, Radcliffe Observatory Quarter, Oxford OX2 6GG, United Kingdom.}
\email{erdmann@maths.ox.ac.uk}

\author{Kay Jin Lim}
\address[K. J. Lim]{Division of Mathematical Sciences, Nanyang Technological University, SPMS-04-01, 21 Nanyang Link, Singapore 637371.}
\email{limkj@ntu.edu.sg}

\begin{abstract} Schocker classified the representation type of the descent algebra of type $\A$ over any field of characteristic zero. In an earlier paper, the authors extended this classification for type $\A$ to fields of positive characteristic. In this paper, we complete the classification for all other types except for $\E_8$. The proof for type $\B$ is entirely theoretical, while some small cases in type $\D$ and the exceptional types require computer computation to determine their Ext-quivers.
\end{abstract}

\subjclass[2010]{05E10, 16G60, 20F55}

\maketitle

\section{Introduction}

Every finite-dimensional algebra over an algebraically closed field can be classified by its representation type. It has finite representation type if it has only finitely many indecomposable modules up to isomorphism; otherwise, it has infinite representation type. In the case of infinite representation type, a result of Drozd \cite{Drozd:1980} shows that it can be further classified as tame or wild. This classification problem has been studied extensively in the literature \cite{Auslander/Reiten/Smalo:1995a, Dlab/Ringel:1976a, Drozd:1980, Erdmann:1990a, Gabriel:1972a, Ringel:1975a}. Much attention has been given to basic algebras, since every finite-dimensional algebra is Morita equivalent to such an algebra. Furthermore, by a result of Gabriel \cite{Gabriel:1979a}, every finite-dimensional basic algebra is isomorphic to a quotient of the path algebra defined by its Ext-quiver by some admissible ideal. Therefore, the Ext-quiver plays a crucial role in studying the representation type of an algebra.

In this paper, we focus on the Solomon descent algebra of a (finite) Coxeter group. The descent algebra was first defined by Solomon \cite{Solomon:1976a} as a subalgebra of the group algebra of the Coxeter group. Together with the work of Atkinson--van Willigenburg \cite{Atkinson/vanWilligenburg:1997a} and Atkinson--Pfeiffer--van Willigenburg \cite{Atkinson/Pfeiffer/vanWilligenburg:2002a}, it is now known that every descent algebra (over an arbitrary field) is basic, as well as the understanding of its Jacobson radical, the classification of its simple modules, and decomposition matrix. Moreover, the descent algebra has a distinguished  basis arising from the length function of the Coxeter system. The structure constants for the bases in types $\A$, $\B$, and $\D$ have combinatorial descriptions \cite{Bergeron/Bergeron:1992, Bergeron/vanWilligenburg:1998, Garsia/Reutenauer:1989a}.

Let $p$ be a prime, or $p=\infty$, the  characteristic of the field over which the descent algebra is defined.  In type $\A$, the algebra is related to the theory of free Lie algebras. In this connection, when $p=\infty$, the decomposition problem for the descent algebra was studied in \cite{Garsia/Reutenauer:1989a} and later extended to the other types in \cite{Bergeron/Bergeron:1992, Bergeron/Bergeron/Howlett/Taylor:1992}. This leads to another basis for the descent algebra when $p=\infty$, which has been used to obtain a complete set of primitive orthogonal idempotents. When $p<\infty$, no closed form is available, but primitive orthogonal idempotents can still be constructed \cite{Benson/Lim:2025a, Lim:2023a}. When $p=\infty$, another important basis for the type $\A$ descent algebra is the set of higher Lie idempotents introduced by Blessenohl--Laue \cite{Blessenohl/Laue:1996a}. Using these elements, they were able to describe the structure of the projective indecomposable modules and determine the Cartan numbers \cite{Blessenohl/Laue:2002a}. Together with a result of Atkinson--Pfeiffer--van Willigenburg \cite{Atkinson/Pfeiffer/vanWilligenburg:2002a}, the Cartan numbers for the type $\A$ descent algebras over arbitrary fields are known. Furthermore, the result of Blessenohl--Laue determines the Ext-quivers in that case, and in \cite{Schocker:2004a}, Schocker used this to determine their representation type. 

The Ext-quiver and the Cartan numbers play a crucial role in determining the representation type of an algebra. Extending Schocker’s work, Saliola \cite{Saliola:2008a} computed the Ext-quivers for the descent algebras of the Coxeter groups $W=\A_n$ and $W=\B_n$ over fields of characteristic $p$ such that $p\nmid |W|$. In general, especially when $p\mid |W|$, the Ext-quiver remains unknown. However, the construction of primitive orthogonal idempotents in \cite{Benson/Lim:2025a, Lim:2023a} is sufficient to compute the Ext-quivers for small cases. This method was used  in \cite{Erdmann/Lim:2025} by the authors to identify the representation type of the type $\A$ descent algebras by reducing the classification problem to algebras of small degree.

Continuing   the  classification of the representation type for type $\A$, we complete in this paper the classification for all other types (except for $\E_8$) over arbitrary fields. We summarise the results below, including the type $\A$ case obtained by Schocker \cite{Schocker:2004a} and by the authors \cite{Erdmann/Lim:2025}, as well as Theorems \ref{T:RepTypeB}, \ref{T:RepTypeD}, \ref{T:RepTypeI}, and \ref{T:RepTypeEFH} in this paper.

\begin{main}[Classification of the Representation Type of the Descent Algebra] Let $F$ be a field of characteristic $p$ where $p$ is either a prime or $p=\infty$ and $\Des_F(W)$ be the descent algebra of the Coxeter group $W$ over $F$. 
\begin{enumerate}[(1)]
\item Let $n\geq 2$. The algebra $\Des_F(\A_{n-1})$ has finite representation type if and only if either
\begin{enumerate}[(i)]
  \item $p=2$ and $n\leq 3$,
  \item $p=3$ and $n\leq 4$, or
  \item $p\geq 5$ and $n\leq 5$.
\end{enumerate} Otherwise, it has wild representation type.
\item Let $n\geq 2$. The algebra $\Des_F(\B_n)$ is
\begin{enumerate}[(i)]
\item finite type if $p\geq 3$ and $n\leq 4$,
\item tame if $p=2=n$,
\item wild otherwise.
\end{enumerate}
\item Let $n\geq 4$. The descent algebra $\Des_F(\D_n)$ is
\begin{enumerate}[(i)]
  \item tame if $p\geq 5$ and $n=4$,
  \item wild otherwise.
\end{enumerate}
\item The algebra $\Des_F(\I_n)$ is tame if $p=2\mid n$ and has finite representation type otherwise.
\item Let $X$ be either $\FF_4$, $\HH_3$, $\HH_4$, $\E_6$ or $\E_7$, and let $A=\Des_F(X)$.
\begin{enumerate}[(i)]
\item If $X=\FF_4$, then $A$ is tame if $p\geq 5$, and wild otherwise.
\item If $X=\HH_3$, then $A$ is wild if $p=2$, and tame otherwise. 
\item If $X=\HH_4$, then $A$ is wild.
\item If $X=\E_6$, then $A$ is wild.
\item If $X=\E_7$, then $A$ is wild.
\end{enumerate}  
\end{enumerate}
\end{main}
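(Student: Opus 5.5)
The proof splits by Coxeter type, and for each type I would follow the same scheme. First, identify a complete set of primitive orthogonal idempotents of the basic algebra $\Des_F(W)$. For $p=\infty$ use the Bergeron--Bergeron--Howlett--Taylor idempotents; for $p<\infty$ use the constructions of \cite{Benson/Lim:2025a, Lim:2023a}. The simples are indexed by the $p$-regular parabolic conjugacy classes, following Atkinson--Pfeiffer--van Willigenburg. Second, determine the Ext-quiver, or enough of it. Third, decide the representation type by comparing with known lists:
\begin{itemize}
\item wildness comes from a wild subquiver of the separated quiver, or from a wild idempotent subalgebra or quotient $eAe$ or $A/I$;
\item finite type and tameness come from exhibiting $A$ (or each block) as a known special biserial, gentle, or Gabriel-type algebra, and checking Cartan numbers against the dimension.
\end{itemize}
Part (1), type $\A$, is taken from \cite{Schocker:2004a, Erdmann/Lim:2025}, so only types $\B$, $\D$, $\I$ and the exceptional types remain.

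For type $\B_n$ I would argue theoretically and inductively. I would use the parabolic embedding $\Des_F(\B_{m})\hookrightarrow\Des_F(\B_n)$ for $m\le n$, and the fact that $\Des_F(\A_{n-1})$ appears as an idempotent-truncated piece, through compositions with no type-$\B$ part. The goal is a reduction lemma: if $eAe$ is wild for a suitable idempotent $e$, then $A$ is wild. Wildness of the small cases then propagates up in $n$. In characteristic $p\ge 3$, for $n\le 4$, I would use Saliola's quiver when $p\nmid|W|$ and check that the quiver with relations is a disjoint union of Dynkin-type or representation-finite biserial pieces. The cases $p=3$ with $n=3,4$ need separate handling through the modular idempotents. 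For $p=2=n$ I expect a small local or dihedral-like algebra that is tame, namely $\Des_F(\B_2)=\Des_F(\I_2(4))$. Part (4), the dihedral groups $\I_n$, is direct: the algebra has dimension $4$, the simples are few, and I would write down the quiver and relations explicitly. The case $p=2\mid n$ gives a tame local or two-point algebra, such as $F[x,y]/(x^2,y^2)$-like behaviour; all other cases are Nakayama or semisimple-plus-arrow algebras of finite type.

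For type $\D_n$ and the exceptional types, the Ext-quivers of the small cases ($\D_4,\D_5$ in each relevant characteristic, and $\FF_4,\HH_3,\HH_4,\E_6,\E_7$) would be computed by machine from the idempotents. For $n\ge 5$ in type $\D$, wildness follows by reduction to $\D_5$ via parabolic idempotent truncation, as in type $\B$. Wildness is then certified by finding a wild quiver, for example a vertex with many arrows or a non-Euclidean subgraph of the separated quiver. For the tame cases ($\D_4$ with $p\ge5$, $\FF_4$ with $p\ge5$, $\HH_3$ with $p\ne2$), I would identify the basic algebra as a special biserial algebra or as one degenerating to a tame concealed algebra of Euclidean type. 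Infinite type is witnessed by a Euclidean subquiver with radical-square-zero behaviour.

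The main obstacle is proving tameness rather than wildness, since it requires knowing the relations and not just the quiver. I would first try to show that radical cubes vanish, or that the algebra is radical-square-zero on the relevant blocks, by comparing Cartan numbers with path counts. Then I would apply the separated-quiver criterion: a radical-square-zero algebra is tame exactly when its separated quiver is a union of Dynkin and Euclidean diagrams. In the type $\B$ induction, the delicate step is constructing idempotents whose truncation is exactly a smaller descent algebra in positive characteristic.
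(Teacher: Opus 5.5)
\paragraph{Gap in the inductive step.} Your treatment of the small cases is close to the paper: Ext-quivers from the Benson--Lim idempotents, then radical-square-zero and separated-quiver arguments, and type $\I$ by hand. The gap is in how you handle the infinite families $\B_n$ and $\D_n$.

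A subalgebra embedding $\Des_F(\B_m)\hookrightarrow\Des_F(\B_n)$ tells you nothing about representation type, since every algebra embeds in a full matrix algebra, which is semisimple. In any case $FW_J\subseteq FW$ does not carry $\Des_F(W_J)$ into $\Des_F(W)$. So everything rests on corner identifications $\Des_F(\text{smaller})\cong e\,\Des_F(\text{larger})\,e$. Your lemma that a wild corner forces $A$ to be wild is fine, but you leave the existence of such $e$ open in positive characteristic, where the Ext-quiver is not known.

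Worse, the identifications you name are false.
\begin{itemize}
\item In characteristic $0$, Saliola's formula makes $\Des(\B_3)$ the path algebra on $\P(\le 3)$ with the single arrow $\varnothing\to(2,1)$; the dimensions check, $7+1=8$. Hence the corner at the three partitions of $3$ (no type-$\B$ part) is $F^3$, not $\Des(\A_2)$.
\item For $p\ge7$, vertex $10$ in Figure~\ref{F:TypeDn5p0} has an incoming arrow and arrows to three distinct vertices. In Figure~\ref{F:TypeDn6p0}, every vertex that has an incoming arrow reaches at most two other vertices by paths. An arrow $i\to j$ in the quiver of $eAe$ requires a path from $i$ to $j$ in the quiver of $A$. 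So $\Des_F(\D_5)$ is not a corner of $\Des_F(\D_6)$, and your reduction of $\D_n$ to $\D_5$ fails already at $n=6$.
\end{itemize}

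\paragraph{What the paper does instead.} The missing idea is to use quotients rather than corners. The Aguiar--Bergeron--Nyman surjections $\beta:\Des_F(\B_{n+1})\twoheadrightarrow\Des_F(\B_n)$ and $\gamma:\Des_F(\D_n)\twoheadrightarrow\Des_F(\B_{n-2})$ (Theorems~\ref{T:surjB} and~\ref{T:surjD}) are defined over $\Z$, so they exist in every characteristic. A wild (resp.\ infinite-type) quotient forces the algebra to be wild (resp.\ of infinite type), and finite type passes to quotients.

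This reduces type $\B$ to three base cases, none of which your plan identifies:
\begin{itemize}
\item $\B_3$ at $p=2$: a single vertex with three loops.
\item $\B_4$ at $p\ge 3$: radical square zero, with separated quiver of type $\A$.
\item $\B_5$ at $p\ge3$. For $p=3$ this needs the mod-$3$ coincidence $M_{32}\cong M_{21^3}$ together with Lemma~\ref{L:Extmodp} to produce a non-Euclidean separated quiver. For $p\ge5$ the block of $\varnothing$ is a wild path algebra.
\end{itemize}

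In type $\D$, the map $\gamma$ combined with the type $\B$ result gives wildness for $n\ge7$ when $p\ge3$. For $p=2$, even $n$ is covered by the $\ge n$ loops of Lemma~\ref{L:Qp2neven}, and odd $n$ by $\gamma$. So the machine computations must include $\D_6$, which your list omits.

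Finally, ``each relevant characteristic'' is an infinite list unless you invoke Corollary~\ref{C:BensonLimExtQuiver}. That result shows the Ext-quiver is the characteristic-zero one whenever $p\nmid n_W$, so only finitely many primes need separate computation.
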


To prove our results for the infinite families of Coxeter groups of types $\B$ and $\D$, we make use of the surjective algebra homomorphisms defined by Aguiar--Bergeron--Nyman \cite{Aguiar/Bergeron/Nyman:2004}. More precisely, in type $\B$, using the homomorphisms relating descent algebras of type $\B$ and the known Ext-quivers, we reduce the problem to  the cases $\B_n$ with $n\leq 5$. In type $\D$, the Ext-quiver is less well understood. However, using homomorphisms relating types $\B$ and $\D$, together with our classification for type $\B$, we again reduce to the cases $\D_n$ with $n\leq 7$. To handle these small cases, we use Magma to compute their Ext-quivers, based on the construction of idempotents from \cite{Benson/Lim:2025a}. For type $\I$, the descent algebra is only 4-dimensional, making direct computation feasible. For the finite types $\FF_4$, $\HH_3$, $\HH_4$, $\E_6$, and $\E_7$ (except $\E_8$), we again compute their Ext-quivers using Magma.

We organise the paper as follows. In the next section, we collate the necessary background on finite-dimensional algebras, especially the theory of basic algebras. In Section \ref{S:Descent}, we specialise to the descent algebras of Coxeter groups, with a particular focus on types $\B$ and $\D$. We introduce the relevant combinatorial descriptions of their simple modules and discuss the surjective algebra homomorphisms that will be used in subsequent sections. Their Ext-quivers are examined further in Section \ref{S:ExtBandD}. The classification of the representation type for types $\B$, $\D$, $\I$, and the finite types $\FF_4$, $\HH_3$, $\HH_4$, $\E_6$, and $\E_7$ is presented in Sections \ref{S:RepTypeB}, \ref{S:RepTypeD}, \ref{S:RepTypeI}, and \ref{S:RepTypeEFH}, respectively.

\section{Preliminaries}

We introduce some basic notation we use throughout the paper. Let $\NN,\NN_0$ denote the set of positive and non-negative integers respectively. For any integers $a\leq b$, we write $[a,b]$ for the set of integers $i$ satisfying $a\leq i\leq b$. We fix an algebraically closed field $F$  of characteristic $p$ where $p$ is either a prime or $p=\infty$. We remark that $\mathbb{F}_p$ is a splitting field for the descent algebras but we require the algebraically closed assumption for general theory of finite-dimensional algebra. For background 
on representation theory of algebras,  we refer to \cite{Auslander/Reiten/Smalo:1995a, Benson:1991}. 


\subsection{Quiver and Path algebra} A (finite) quiver $Q$ is a directed graph $Q=(Q_0,Q_1)$ where $Q_0$ and $Q_1$ are the finite vertex and finite arrow sets respectively.  For each arrow $\gamma\in Q_1$, we write $s(\gamma)$ and $t(\gamma)$ for the starting and terminating vertices of $\gamma$ respectively. The underlying graph of $Q$ is the undirected graph of $Q$ by forgetting the orientations of the arrows in $Q$. 

Given a quiver $Q$, a path of length $\ell\in\NN_0$ in $Q$ is a finite sequence $\alpha=\alpha_\ell\cdots\alpha_1$ where $\alpha_1,\ldots,\alpha_\ell$ are arrows and, for each $i\in [1,\ell-1]$, we have $t(\alpha_i)=s(\alpha_{i+1})$. In this case, we write $s(\alpha)=s(\alpha_1)$ and $t(\alpha)=\alpha_\ell$. For each $v\in Q_0$, we write $e_v$ for the path of length 0 where $s(e_v)=t(e_v)=v$. Let $\beta$ be another path in $Q$. If $t(\alpha)=s(\beta)$, then we write $\beta\alpha$ for the composition of these two paths. Notice that our convention of path composition is read from right to left. 

Given a  quiver $Q$ and a field $F$, the path algebra $FQ$ is defined as follows. As an $F$-vector space, $FQ$ has basis consisting of all paths in $Q$. The multiplication $\beta\cdot\alpha$ of two paths $\alpha$ and $\beta$ is defined as the path $\beta\alpha$ if $s(\beta)=t(\alpha)$ and 0 otherwise. As such, it is an  associative $F$-algebra with the identity $\sum_{v\in Q_0}e_v$. Furthermore, the set $\{e_v:v\in Q_0\}$ is a complete set of primitive orthogonal idempotents of $FQ$ and $FQ$ is finite-dimensional if and only if $Q$ does not contain an oriented cycle. 

\subsection{Ext-quiver}

Let $A$ be a finite-dimensional $F$-algebra. For each $A$-module $V$ (finite-dimensional over $F$) and $j\in\NN_0$, we write $\Rad^j(A)$ and $\Rad^j(V)$ for the $j$th Jacobson radical of $A$ and $j$th radical power of $V$ respectively. Moreover, we write $\Proj(V)$ for the projective cover of $V$ and $\soc(V)$ for the socle of $V$. Suppose that $S_1,\ldots,S_m$ are all the simple $A$-modules up to isomorphism and let $P_1,\ldots,P_m$ be their respective projective covers.  Furthermore, we let $e_1,\ldots,e_m$ be a complete set of primitive orthogonal idempotents of $A$ such that $P_i\cong Ae_i$ for each $i\in [1,m]$. The Ext-quiver $Q$ of $A$ is the quiver with vertices labelled by the simple $A$-modules, in this case, the numbers $1,\ldots,m$, and, for each $i,j\in [1,m]$, the number of arrows from $i$ to $j$ is given by the dimension of $\Ext^1_A(S_i,S_j)$. Recall that the space $\Ext^1_A(S_i,S_j)$ is isomorphic with \[\Hom_A(P_j,\Rad(P_i))/ \Hom_A(P_j,\Rad^2(P_i))\cong e_j\Rad(A)e_i/e_j\Rad^2(A)e_i.\] In other words, the number of arrows from $i$ to $j$ is the number of $S_j$ appearing as a direct summand in $\Rad(P_i)/\Rad^2(P_i)$.

For our purpose, we need to  change of base ring and understand its effect on the Ext-quiver. We record a lemma which shall be used later. The proof follows from \cite[Lemma 1.5.2]{Cline/Parshall/Scott:1996a} and we refer the reader to \cite[\S2.8]{Erdmann/Lim:2025} for further discussion. 

\begin{lemma}\label{L:Extmodp} Let $(K,\cO,k)$ be a $p$-modular system and $B$ be an $\cO$-algebra which is finitely generated and $\cO$-free. Suppose that $V,T$ are finitely generated $B$-modules that are also $\cO$-free. If $\Ext^1_{B\otimes_\cO K}(V\otimes_\cO K, T\otimes_\cO K)\neq 0$, then $\Ext^1_{B\otimes_\cO k}(V\otimes_\cO k, T\otimes_\cO k)\neq 0$.
\end{lemma}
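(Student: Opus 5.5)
The approach is to compare dimensions of $\Ext^1$ over $K$ and over $k$ through a free resolution over $\cO$, using semicontinuity. I work with $\Ext^1$ computed from a presentation. Since $B$ is finitely generated and $\cO$-free, and $\cO$ is a discrete valuation ring (hence Noetherian), I choose a resolution of $V$ by finitely generated free $B$-modules
\[
B^{a_2}\xrightarrow{d_2} B^{a_1}\xrightarrow{d_1} B^{a_0}\to V\to 0.
\]
Each term is $\cO$-free. Because $V$ is $\cO$-free, the sequence of kernels splits over $\cO$, so all syzygies are $\cO$-free and the resolution stays exact after applying $-\otimes_\cO K$ and $-\otimes_\cO k$. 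Moreover $B^{a}\otimes_\cO k=(B\otimes_\cO k)^{a}$, so the reduced sequences are free resolutions of $V\otimes K$ and $V\otimes k$ over $B_K$ and $B_k$ respectively.

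Applying $\Hom_B(-,T)$ gives a complex
\[
T^{a_0}\xrightarrow{d_1^*} T^{a_1}\xrightarrow{d_2^*} T^{a_2}
\]
of finitely generated free $\cO$-modules, since $T$ is $\cO$-free. Its base changes compute $\Ext^1$ over $B_K$ and over $B_k$, because $\Hom_{B}(B^a,T)\otimes_\cO R\cong \Hom_{B_R}(B_R^a,T_R)$ for $R=K,k$. So with $n=\operatorname{rank}_\cO T^{a_1}$ we have
\[
\dim \Ext^1_{B_R}(V_R,T_R)=n-\operatorname{rank}(d_2^*\otimes R)-\operatorname{rank}(d_1^*\otimes R).
\]

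The key step is the comparison of ranks. Choosing $\cO$-bases, each $d_i^*$ is a matrix over $\cO$. The rank over $k$ of its reduction is at most its rank over $K$, since a nonzero minor mod $\mathfrak m$ lifts to a nonzero minor over $\cO$. Hence $\dim_k\Ext^1_{B_k}\ge \dim_K\Ext^1_{B_K}$, and the claim follows.

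The main point to get right is the exactness of the reduced resolution together with the identification of the base-changed Hom complex. Both rest on $V$ and all syzygies being $\cO$-free, which in turn comes from $\cO$ being a PID and submodules of free modules being free. Alternatively, one can simply cite \cite[Lemma 1.5.2]{Cline/Parshall/Scott:1996a} as the paper indicates.
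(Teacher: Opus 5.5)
Your proof is correct, but it takes a different route from the paper. The paper gives no argument of its own and defers entirely to \cite[Lemma 1.5.2]{Cline/Parshall/Scott:1996a}, a general base-change statement for $\Ext$ over $\cO$-free algebras; you prove the claim from scratch.

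Your key device is a single truncated free resolution. Its $\Hom$-complex $T^{a_0}\to T^{a_1}\to T^{a_2}$ consists of finitely generated free $\cO$-modules, and after base change it computes $\Ext^1$ over $K$ and over $k$ at the same time. The only further input is that the rank of a matrix over $\cO$ cannot increase on reduction modulo the maximal ideal. This gives an elementary, self-contained proof of the stronger inequality $\dim_k\Ext^1_{B\otimes_\cO k}(V\otimes_\cO k,T\otimes_\cO k)\geq\dim_K\Ext^1_{B\otimes_\cO K}(V\otimes_\cO K,T\otimes_\cO K)$. The argument works verbatim for every $\Ext^n$ and does not use completeness of $\cO$.

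The citation route is more structural. Put $E^i=\Ext^i_B(V,T)$. Flat base change gives $E^1\otimes_\cO K\cong\Ext^1_{B\otimes_\cO K}(V\otimes_\cO K,T\otimes_\cO K)$. The sequence $0\to T\to T\to T\otimes_\cO k\to 0$, with the first map multiplication by a uniformiser, yields $0\to E^1\otimes_\cO k\to\Ext^1_{B\otimes_\cO k}(V\otimes_\cO k,T\otimes_\cO k)\to\mathrm{Tor}_1^\cO(E^2,k)\to 0$. This identifies the discrepancy between the two dimensions exactly as the torsion of $E^1$ and $E^2$. In your bookkeeping these two contributions are precisely the rank drops of $d_1^*$ and $d_2^*$, read off from their Smith normal forms. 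So the two proofs carry the same information, yours in matrix form.

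One small point: your argument uses that $B$ is finitely generated as an $\cO$-module, so that $B$ is Noetherian and the $T^{a_i}$ have finite rank. This is the intended reading of the hypothesis and holds for $B=\Des_\cO(W)$.
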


Furthermore, we want to exploit surjective algebra homomorphisms between different basic algebras, and we will apply the following general observation.

\begin{lemma}\label{L:surjExt}
Assume $\Lambda$ and $\Gamma$ are two $F$-algebras, with Ext-quivers $Q_{\Lambda}$ and $Q_{\Gamma}$. 
Suppose $\beta: \Lambda\to \Gamma$ is a surjective algebra homomorphism. Then $Q_{\Gamma}$ can be identified as a subquiver of 
$Q_{\Lambda}$.
\end{lemma}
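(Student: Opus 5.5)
Since $\beta$ is surjective, $\Gamma\cong\Lambda/I$ with $I=\ker\beta$, so I will work with $\Gamma=\Lambda/I$ and identify $\Gamma$-modules with the $\Lambda$-modules annihilated by $I$. The identification of quivers then comes from three steps: match vertices, compare $\mathrm{Rad}$ and $\mathrm{Rad}^2$, and compare the dimension counts $e_j\Rad(\cdot)e_i/e_j\Rad^2(\cdot)e_i$ recalled above.

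\emph{Vertices.} The first step is $\Rad(\Gamma)=\beta(\Rad(\Lambda))$. One inclusion: $\Gamma/\beta(\Rad\Lambda)$ is a quotient of the semisimple algebra $\Lambda/\Rad\Lambda$, hence semisimple, so $\Rad\Gamma\subseteq\beta(\Rad\Lambda)$. Conversely, $\beta(\Rad\Lambda)$ is a nilpotent ideal, so it lies in $\Rad\Gamma$. Taking powers gives $\Rad^2(\Gamma)=\beta(\Rad^2\Lambda)$. Consequently every simple $\Gamma$-module, inflated along $\beta$, is a simple $\Lambda$-module, and non-isomorphic simple $\Gamma$-modules inflate to non-isomorphic ones. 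Hence $Q_\Gamma$'s vertex set injects into that of $Q_\Lambda$. Concretely, choose primitive orthogonal idempotents $e_1,\dots,e_m$ of $\Lambda$ as in the setup. Each $\beta(e_i)$ is either $0$ or primitive: $\beta(e_i)\Gamma\beta(e_i)$ is a quotient of the local ring $e_i\Lambda e_i$, hence local or zero. The nonzero $\beta(e_i)$ form a complete set of primitive orthogonal idempotents of $\Gamma$, with $\Gamma\beta(e_i)$ the projective cover of the inflated simple $S_i$.

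\emph{Arrows.} For vertices $i,j$ with $\beta(e_i),\beta(e_j)\ne0$, the map $\beta$ induces a surjection
\[e_j\Rad(\Lambda)e_i\twoheadrightarrow \beta(e_j)\Rad(\Gamma)\beta(e_i)\]
by the first step. This map sends $e_j\Rad^2(\Lambda)e_i$ onto $\beta(e_j)\Rad^2(\Gamma)\beta(e_i)$. So it descends to a surjection
\[e_j\Rad(\Lambda)e_i/e_j\Rad^2(\Lambda)e_i\twoheadrightarrow \beta(e_j)\Rad(\Gamma)\beta(e_i)/\beta(e_j)\Rad^2(\Gamma)\beta(e_i).\]
Therefore $\dim\Ext^1_\Gamma(S_i,S_j)\le\dim\Ext^1_\Lambda(S_i,S_j)$. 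Alternatively, this is the standard fact that $\Ext^1_{\Lambda/I}\to\Ext^1_\Lambda$ is injective for modules annihilated by $I$: an extension of $\Gamma$-modules that splits over $\Lambda$ splits over $\Gamma$, since a $\Lambda$-splitting map between $\Gamma$-modules is $\Gamma$-linear. Hence the arrows of $Q_\Gamma$ between $i$ and $j$ can be chosen among those of $Q_\Lambda$, so $Q_\Gamma$ is a subquiver.

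\emph{Main subtlety.} The only delicate point is the equality $\Rad\Gamma=\beta(\Rad\Lambda)$ and the resulting compatibility of idempotents and radical layers. The semisimplicity and nilpotency argument above handles this. I would probably present the $\Ext^1$-injectivity argument as the main proof, since it avoids idempotent bookkeeping.
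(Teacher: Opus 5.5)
Your proof is correct. For the vertices you argue exactly as the paper does: inflation along $\beta$ preserves simplicity and non-isomorphism.

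For the arrows, the paper works at the level of modules. It shows that $\Gamma/\Rad^2(\Gamma)$, inflated to $\Lambda$, still has radical length two. Hence the composite $\Lambda\to\Gamma\to\Gamma/\Rad^2(\Gamma)$ kills $\Rad^2(\Lambda)$, so $\Gamma/\Rad^2(\Gamma)$ is a factor module of $\Lambda/\Rad^2(\Lambda)$. The paper then leaves the arrow-count inequality to be read off.

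Your idempotent argument is the same idea made explicit. You use:
\begin{itemize}
\item the identities $\Rad(\Gamma)=\beta(\Rad(\Lambda))$ and $\Rad^2(\Gamma)=\beta(\Rad^2(\Lambda))$;
\item the observation that the nonzero $\beta(e_i)$ are a complete set of primitive orthogonal idempotents of $\Gamma$.
\end{itemize}
These are precisely what is needed to pass from ``factor of $\Lambda/\Rad^2(\Lambda)$'' to the vertex-by-vertex surjection $e_j\Rad(\Lambda)e_i/e_j\Rad^2(\Lambda)e_i\twoheadrightarrow\beta(e_j)\Rad(\Gamma)\beta(e_i)/\beta(e_j)\Rad^2(\Gamma)\beta(e_i)$. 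So your version fills in a step the paper leaves implicit. It also identifies which vertices survive, namely those with $\beta(e_i)\neq 0$.

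The $\Ext^1$-injectivity argument you propose to lead with is a genuinely different and shorter route. It uses only two facts: inflation is exact, and a $\Lambda$-splitting of an extension of $\Gamma$-modules is automatically $\Gamma$-linear. It needs no information about radicals, idempotents, or basicness. What it does not give by itself is the explicit description of the projective indecomposable $\Gamma$-modules as the quotients $\Gamma\beta(e_i)$ of $\Lambda e_i$, but the statement does not require that.
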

\begin{proof}  The algebra map $\beta$ induces the inflation map $\beta^*$ from $\Gamma$-modules to $\Lambda$-modules, that is a module for $\Gamma$ is viewed via $\beta$ as a module for $\Lambda$. So $\beta^*(S)$ is simple for any simple $\Gamma$-module $S$. Furthermore, $\beta^*(S_1)$ and $\beta^*(S_2)$ are not isomorphic if $S_1$ and $S_2$ are not isomorphic. So we can identify the vertices of $Q_{\Gamma}$ with a subset of the vertices of $Q_{\Lambda}$.  


Recall that arrows of an Ext-quiver are in bijection with a basis of the quotient modulo the second power of the radical. We want to show that, as $\Lambda$-modules, $\Gamma/\Rad^2(\Gamma)$ is isomorphic to a factor of $\Lambda/\Rad^2(\Lambda)$.
Consider $U=\Gamma/\Rad^2(\Gamma)$ and view this as a module for $\Gamma$. 
Let $\eta: \Lambda \to U$ be the composition of $\beta$ followed by the natural map $\Gamma \to U$. Let $\Lambda'=\Ker(\eta)$.
The $\Gamma$-module $U$ has radical length two by definition. Then also the $\Lambda$-module $\beta^*(U)$ has radical length two (we use the first part of the proof).  Therefore $\Rad^2(\Lambda)$  is contained in the kernel of $\eta$. 
Now, $\beta^*(U)$ is isomorphic to $\Lambda/\Lambda'$, and this is then a factor module of $\Lambda/\Rad^2(\Lambda)$.
\end{proof}

\subsection{Representation type}

Let $A$ be a finite-dimensional algebra over the field $F$. The algebra $A$  has finite representation type if there are only finite number of  indecomposable $A$-modules, up to isomorphism. For example, semisimple algebras have finite representation type.  The algebra  has tame representation type if it is not of finite representation type but, up to isomorphism, for each $d\in\NN$, almost all (except finitely many) indecomposable $A$-modules of dimension $d$ belong to 
finitely many  one-parameter families. Let 
$F\langle X,Y\rangle$ be the free $F$-algebra on two variables $X, Y$. The algebra $A$ has wild representation type if there exists a finitely generated ($A$-$F\langle X,Y\rangle$)-bimodule $M$ such that $M$ is free as right $F\langle X,Y\rangle$-module and the functor $M\otimes_{F\langle X,Y\rangle}-$ from  $F\langle X,Y\rangle$-\modcat  to $A$-\modcat preserves indecomposability and isomorphism classes. We have the following theorem of Drozd (see also \cite{CB}).

\begin{theorem}[\cite{Drozd:1980}]\label{T:trichotomy} An algebra which is not of  finite type is either tame or wild but not both.
\end{theorem}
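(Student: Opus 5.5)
The plan is to treat this as an external, foundational result and rely on Drozd's original proof, in the reformulation of Crawley-Boevey \cite{CB}, rather than prove anything new. The statement splits into two parts of very different difficulty. The first is that an algebra cannot be both tame and wild. The second is that a non-finite-type algebra is at least one of the two.

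For \emph{not both}, I would argue by a dimension count on module varieties.

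1. Suppose $A$ is wild, with bimodule $M$ free of rank $r$ over $F\langle X,Y\rangle$.
2. For each $(\lambda,\mu)\in F^2$, take the $F\langle X,Y\rangle$-module $N_{\lambda,\mu}=F^2$ on which $X$ acts by $\left(\begin{smallmatrix}\lambda&1\\0&\lambda\end{smallmatrix}\right)$ and $Y$ acts by $\left(\begin{smallmatrix}\mu&0\\0&\mu\end{smallmatrix}\right)$. These modules are indecomposable and pairwise non-isomorphic.
3. The functor $M\otimes_{F\langle X,Y\rangle}-$ therefore yields a two-parameter family of pairwise non-isomorphic indecomposable $A$-modules, all of the fixed dimension $2r$.
4. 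Tameness would put all but finitely many of these into finitely many one-parameter families. Each such family meets only a one-dimensional set of isomorphism classes, so a two-dimensional family cannot be covered.
5. To make step 4 precise, I would compare dimensions of constructible subsets of the module variety $\mathrm{mod}_A(2r)$ modulo the $GL_{2r}$-action. Concretely, this means bounding the dimension of the union of orbits through the images of finitely many curves.

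For \emph{tame or wild}, I would follow the bocs reduction.

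1. Replace $A$-modules by representations of a layered bocs. Modules up to isomorphism correspond to minimal projective presentations $P_1\to P_0$. Morphisms between projectives are filtered by powers of $\Rad(A)$, so the category of such presentations is the representation category of a triangular bocs $\mathfrak{B}=(R,W)$ with $R$ a path algebra.
2. Fix a dimension vector $d$ and run Drozd's reduction algorithm. At each stage I pick a minimal solid arrow in the triangular filtration and apply one of four moves: edge reduction (normal forms of a matrix over two vertices), regularization (removing an arrow killed by a differential), unravelling of a loop, or localization.
3. Each move replaces the bocs by one whose representations of dimension at most $d$ cover the old ones, up to finitely many exceptions, and it strictly decreases a suitable invariant.
4. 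The process ends in one of two ways. Either every vertex of the resulting minimal bocs has minimal algebra $F$ or $F[x,f(x)^{-1}]$, which gives finitely many one-parameter families in dimension $d$, hence tameness. Or at some stage a vertex carries a free loop together with a further solid arrow from that vertex to itself. This configuration yields a strict embedding of representations of $F\langle X,Y\rangle$, hence wildness.

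The main obstacle is the second part, specifically showing that the reduction terminates with uniform control over dimension $d$. One must also verify that the only obstruction to reaching a minimal bocs is the wild configuration of two independent loops at a vertex. To handle this I would follow Crawley-Boevey's treatment: order arrows by the triangular filtration, and use the relation $\delta(a)\in$ (span of lower arrows) to show that regularization and edge reduction never increase the relevant dimension count. The delicate steps are localization of $F[x]$ at finitely many points and the treatment of non-reduced differentials, and there I would take over the arguments of \cite{Drozd:1980, CB} essentially verbatim.
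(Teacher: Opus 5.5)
Your plan agrees with the paper. The paper gives no argument of its own: it quotes Drozd's theorem \cite{Drozd:1980}, pointing to Crawley-Boevey's bocs-theoretic treatment \cite{CB}, which are exactly the sources you propose to follow. Your two-parameter-family dimension count for ``not both'' and your outline of the reduction algorithm are a faithful sketch of that standard proof, with the wild case being a solid loop $a$ with $\delta(a)=0$ at a vertex whose minimal algebra is $F[x,f(x)^{-1}]$.
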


For the purpose of identifying the  representation type, the following lemmas will be useful. 

\begin{lemma}\label{L: surj alg} Let $A,B$ be $F$-algebras. Suppose that there is a surjective algebra homomorphism $\phi:A\to B$. If $B$ has infinite (respectively, wild) representation type then $A$ has infinite (respectively, wild) representation type.
\end{lemma}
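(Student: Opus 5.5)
The plan is to use inflation along $\phi$. Let $\phi^*\colon B\text{-}\modcat\to A\text{-}\modcat$ be the functor that regards a $B$-module $V$ as an $A$-module via $a\cdot v=\phi(a)v$. Because $\phi$ is surjective, a subspace of $V$ is $A$-stable if and only if it is $B$-stable, since $\phi(A)=B$. Likewise, a linear map $V\to W$ between $B$-modules is $A$-linear if and only if it is $B$-linear. So $\phi^*$ is a full and faithful embedding, and it identifies $B$-\modcat with the full subcategory of $A$-modules annihilated by $\Ker\phi$. In particular $\phi^*$ preserves indecomposability, because endomorphism rings are unchanged. It also reflects and preserves isomorphism classes, and it preserves dimension over $F$. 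This is the same observation already used in the proof of Lemma \ref{L:surjExt}.

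For the infinite-type statement, suppose $B$ has infinitely many pairwise non-isomorphic indecomposable modules. Their images under $\phi^*$ are indecomposable $A$-modules, and they remain pairwise non-isomorphic because $\phi^*$ is fully faithful. Hence $A$ has infinite representation type.

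For the wild statement, let $M$ be a $(B$-$F\langle X,Y\rangle)$-bimodule witnessing wildness of $B$: it is finitely generated, free as a right $F\langle X,Y\rangle$-module, and $M\otimes_{F\langle X,Y\rangle}-$ preserves indecomposability and isomorphism classes. Let $M'=\phi^*M$ be $M$ with the left action restricted along $\phi$. This is an $(A$-$F\langle X,Y\rangle)$-bimodule, and its right structure is untouched, so it is still free as a right module. For every $F\langle X,Y\rangle$-module $N$ we have
\[
M'\otimes_{F\langle X,Y\rangle}N=\phi^*\bigl(M\otimes_{F\langle X,Y\rangle}N\bigr),
\]
so the functor attached to $M'$ is the composite $\phi^*\circ(M\otimes_{F\langle X,Y\rangle}-)$. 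By the first paragraph, this composite preserves indecomposability and isomorphism classes. Finite generation of $M'$ over $A$ and $F\langle X,Y\rangle$ follows from that of $M$, since $\phi$ is surjective: generators of $M$ as a bimodule also generate $M'$. So $A$ is wild.

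The only step needing care is the full faithfulness of $\phi^*$, in particular that non-isomorphic $B$-modules stay non-isomorphic over $A$. That is exactly where surjectivity of $\phi$ is used, and it is immediate from $\phi(A)=B$. Everything else is formal.
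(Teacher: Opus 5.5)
Your inflation argument is correct and complete. The paper states this lemma without proof, treating it as standard. The argument it implicitly relies on is exactly yours: the fully faithful inflation functor along the surjection, the same device the paper uses in the proof of Lemma~\ref{L:surjExt}.
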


\begin{lemma}[{\cite[Proposition 1.2]{Erdmann/Holm/Iyama/Schroer:2004a}}]\label{L:projinj} If $B=A/\soc(P)$ where $P$ is both a projective and injective $A$-module, then $A$ and $B$ have the same representation type.
\end{lemma}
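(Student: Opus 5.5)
The plan is to compare the indecomposable modules of $A$ and $B$ directly. Inflation along $A\to B$ identifies $B$-\modcat with the full subcategory of $A$-modules annihilated by the ideal $\soc(P)$. This inflation preserves indecomposability and isomorphism classes. So it suffices to show that every indecomposable $A$-module which is not annihilated by $\soc(P)$ is isomorphic to an indecomposable direct summand of $P$. There are only finitely many of those, so the two module categories will differ in only finitely many indecomposables up to isomorphism.

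I assume, as the statement implicitly requires, that $\soc(P)$ is a two-sided ideal. First I reduce to $P$ indecomposable. Write $P=\bigoplus_i P_i$ with each $P_i\cong Ae_i$ indecomposable projective-injective. Then $\soc(P)=\sum_i\soc(P_i)$. An indecomposable module not annihilated by $\soc(P)$ is not annihilated by some $\soc(P_i)$, and I will argue with that summand. Since $P_i$ is indecomposable and injective, $\soc(P_i)$ is simple.

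The key step is as follows. Let $M$ be indecomposable and take $x\in\soc(P_i)\subseteq Ae_i$ and $m\in M$ with $xm\neq 0$. Because $x=xe_i$, the element $m':=e_im$ is nonzero. Consider the homomorphism
\[f\colon Ae_i\to M,\qquad a\mapsto am'.\]
Then $f(x)=xm\neq 0$, so $f$ does not vanish on the simple socle of $Ae_i$, and hence $\Ker(f)=0$. Since $Ae_i$ is injective, $f$ splits, so $P_i$ is isomorphic to a direct summand of $M$. As $M$ is indecomposable, $M\cong P_i$.

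It remains to conclude for each representation type. $A$ has finite type if and only if $B$ does, since their sets of indecomposables differ by finitely many. For tameness, the definition only concerns all but finitely many indecomposables in each dimension. Adding or removing finitely many modules therefore preserves it, and $A$ is tame if and only if $B$ is. The wild case then follows from Drozd's dichotomy, Theorem \ref{T:trichotomy}, without constructing bimodules. Alternatively, one direction already follows from Lemma \ref{L: surj alg}.

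The main obstacle is the socle argument together with the bookkeeping when $P$ is decomposable, in particular justifying that $\soc(P)$ is an ideal. The rest is formal.
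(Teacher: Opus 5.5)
Your proposal is correct and is essentially the standard argument behind the cited \cite[Proposition 1.2]{Erdmann/Holm/Iyama/Schroer:2004a}; the paper itself gives no proof of its own. The key point, as you have it, is that a homomorphism $Ae_i\to M$ that is nonzero on the simple socle is a split monomorphism, so the indecomposable $A$-modules are those of $B$ together with the finitely many indecomposable summands of $P$. The one step you pass over quickly, that $A$ tame implies $B$ tame, is harmless: an $A$-$F[T]$-family containing infinitely many $B$-modules is already annihilated by $\soc(P)$, or alternatively you can combine Lemma~\ref{L: surj alg} with Theorem~\ref{T:trichotomy}.
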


In general, it is difficult to determine the representation type of an algebra. However, there are cases in which it is sufficient to exploit  the separated quivers of the Ext-quiver. Given a quiver $Q=(Q_0,Q_1)$. The separated quiver of $Q$ is the quiver with the vertex set $\{i:i\in Q_0\}\cup\{i':i\in Q_0\}$ and arrow set consisting of $i\to j'$ for each arrow $i\to j$ in $Q_1$. In the case when the algebra $A$ is 2-nilpotent, that is,  $\Rad^2(A)=0$, $A$ is stably equivalent to an algebra with Ext-quiver the separated quiver of the Ext-quiver of $A$. A general result of Krause \cite{Krause:1997} implies the following statement.  

\begin{theorem}\label{T:separatedquiver} Let $A$ be an  $\F$-algebra, $Q$ be the Ext-quiver of $A$ and suppose that $\Rad^2(A)=0$.
\begin{enumerate}[(i)]
\item {{\cite[X, Theorem 2.6]{Auslander/Reiten/Smalo:1995a}}} Then $A$ has finite type if and only if the separated quiver of $Q$ is a finite union of Dynkin diagrams.
\item {\cite{Dlab/Ringel:1976a,Donovan/Freislich:1973}} Then $A$ is tame if and only if the separated quiver of $Q$ is a finite union of Dynkin diagrams and (at least one) extended Dynkin diagrams.
\end{enumerate}
\end{theorem}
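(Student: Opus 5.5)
The plan is to reduce both statements to the classical classification of hereditary algebras by means of a stable equivalence. Write $S=A/\Rad(A)$, which is semisimple (indeed a product of copies of $F$, since $F$ is algebraically closed and we may pass to the basic algebra). Let $M=\Rad(A)$, regarded as an $S$-$S$-bimodule; this makes sense because $\Rad^2(A)=0$. Form the triangular matrix algebra
\[
\Lambda=\begin{pmatrix} S & 0\\ M & S\end{pmatrix}.
\]
Then $\Rad(\Lambda)=\begin{pmatrix}0&0\\ M&0\end{pmatrix}$ squares to zero, and $\Lambda/\Rad(\Lambda)\cong S\times S$. Hence $\Lambda$ is hereditary, being a triangular matrix algebra over semisimple rings. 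The first concrete check is that the Ext-quiver of $\Lambda$ is exactly the separated quiver of $Q$. The vertices are $i$ and $i'$, one copy for each factor of $S$. The arrows $i\to j'$ are counted by $\dim e_j M e_i=\dim e_j\Rad(A)e_i$, which equals the number of arrows $i\to j$ in $Q$ by the description of $\Ext^1$ recalled in the subsection on Ext-quivers.

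Next I would construct the stable equivalence explicitly, via the functor
\[
V\longmapsto \bigl(V/\Rad(V),\ \Rad(V),\ \mu\bigr),
\]
where $\mu\colon M\otimes_S V/\Rad(V)\to \Rad(V)$ is the multiplication map. This sends $A$-modules to $\Lambda$-modules, written as triples $(X,Y,\phi\colon M\otimes_S X\to Y)$. I would check the following points in order:
\begin{enumerate}[(a)]
\item The functor is full modulo maps factoring through semisimple and projective modules. It is also faithful on $\underline{\mathrm{mod}}\,A$.
\item Its essential image consists of exactly those triples with $\phi$ surjective and having no simple summand of the form $(0,Y,0)$. Every indecomposable $\Lambda$-module is either of this type or simple injective/projective.
\item Consequently, there is a bijection between the non-projective indecomposable $A$-modules and the non-projective indecomposable $\Lambda$-modules, up to finitely many simples. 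This bijection satisfies $\dim$ (image) $\le 2\dim V$, and conversely.
\end{enumerate}
This is the standard Auslander--Reiten argument, as in \cite[X]{Auslander/Reiten/Smalo:1995a}.

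From this the finite-type statement (i) follows at once. $A$ has finitely many indecomposables if and only if $\Lambda$ does. By Gabriel's theorem, the hereditary algebra $\Lambda$ has finite type if and only if each connected component of its Ext-quiver, namely the separated quiver, is a Dynkin diagram.

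For (ii), finiteness of the number of indecomposables does not suffice: one needs that the stable equivalence preserves tameness and wildness. This is the main obstacle, and it is where I would invoke Krause's result \cite{Krause:1997} that stable equivalences of finite-dimensional algebras preserve representation type (tame versus wild), combined with Drozd's dichotomy, Theorem \ref{T:trichotomy}. Alternatively, one could argue directly that the functor above, composed with a one-parameter family or a wild bimodule for $\Lambda$, produces one for $A$ with only a bounded dimension shift. It then remains to apply the Dlab--Ringel and Donovan--Freislich classification to $\Lambda$. A connected hereditary algebra is tame exactly when its quiver is extended Dynkin; it is of finite type when Dynkin and wild otherwise. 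Hence $\Lambda$, and therefore $A$, is tame if and only if every component of the separated quiver is Dynkin or extended Dynkin, with at least one extended Dynkin component.
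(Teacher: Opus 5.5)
Your proposal is correct and takes the same route the paper indicates: the stable equivalence from Chapter X of Auslander--Reiten--Smal\o{} between $A$ and the hereditary triangular algebra $\Lambda$, whose Ext-quiver is the separated quiver. From there, Gabriel's theorem gives (i), and Krause's theorem together with Drozd's dichotomy and the Dlab--Ringel/Donovan--Freislich classification gives (ii). Your bookkeeping in (a)--(c) can be sharpened slightly: your functor preserves dimension exactly and is full outright once you use the splitting $A=A/\Rad(A)\oplus\Rad(A)$, and the only indecomposable $\Lambda$-modules it misses are the simple projectives $(0,S_j,0)$, since the simple injectives $(S_i,0,0)$ are the images of the simple $A$-modules.
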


In this paper, a particular application of Lemma \ref{L: surj alg} and Theorem \ref{T:separatedquiver} is given as follows.

\begin{corollary}\label{C:wildquotient} If the separated quiver of $A$ (and hence of $A/\Rad^2(A)$) contains a component other than a Dynkin or extended Dynkin diagram, then $A$ is wild.
\end{corollary}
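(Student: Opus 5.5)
The argument passes to the quotient $B=A/\Rad^2(A)$ and then combines Lemma \ref{L: surj alg} with Theorem \ref{T:separatedquiver}. I make two observations first.

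\emph{The quotient has the same Ext-quiver.} The natural map $A\to B$ is a surjective algebra homomorphism, and $\Rad(B)=\Rad(A)/\Rad^2(A)$, so $\Rad^2(B)=0$. The vertices of both Ext-quivers are the simple modules, and these coincide, since simple $A$-modules are annihilated by $\Rad(A)\supseteq\Rad^2(A)$. For the arrows, use the description of $\Ext^1$ recalled above. Since the idempotents $e_i$ map to a complete set of primitive orthogonal idempotents of $B$, we get
\[
e_j\Rad(B)e_i/e_j\Rad^2(B)e_i \;=\; e_j\bigl(\Rad(A)/\Rad^2(A)\bigr)e_i \;\cong\; e_j\Rad(A)e_i/e_j\Rad^2(A)e_i .
\]
So the Ext-quivers of $A$ and $B$ are equal, and hence so are their separated quivers. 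This justifies the parenthetical remark in the statement.

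\emph{$B$ is wild.} By hypothesis, the separated quiver of $B$ has a component that is neither Dynkin nor extended Dynkin. By Theorem \ref{T:separatedquiver}(i), $B$ is not of finite type, because its separated quiver is not a finite union of Dynkin diagrams. By Theorem \ref{T:separatedquiver}(ii), $B$ is not tame, because tameness would force every component to be Dynkin or extended Dynkin. Theorem \ref{T:trichotomy} then says that $B$ is wild.

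\emph{Conclusion.} Applying Lemma \ref{L: surj alg} to the surjection $A\to B$ shows that $A$ is wild.

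The only step needing any care is the identification of Ext-quivers under passage to $A/\Rad^2(A)$; everything else is a direct citation. One could also appeal to Lemma \ref{L:surjExt}, but that lemma only yields an inclusion of quivers, so the direct computation above is cleaner.
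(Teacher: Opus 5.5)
Your proof is correct and follows the paper's argument: pass to the 2-nilpotent quotient $A/\Rad^2(A)$, use Theorem~\ref{T:separatedquiver} and Theorem~\ref{T:trichotomy} to see that it is wild, and lift wildness to $A$ via Lemma~\ref{L: surj alg}. The only difference is that you explicitly verify, via $e_j\Rad(A)e_i/e_j\Rad^2(A)e_i$, that $A$ and $A/\Rad^2(A)$ have the same Ext-quiver, a step the paper merely asserts (its text also writes $B=\Rad^2(A)$ where $A/\Rad^2(A)$ is meant).
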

\begin{proof} Let $B=\Rad^2(A)$. We consider the canonical surjection $\pi:A\to B$. The Ext-quivers of $A$ and $B$ are identical. Since $B$ is 2-nilpotent, $B$ is not finite nor tame by our assumption using Theorem \ref{T:separatedquiver}. So it is wild by Theorem \ref{T:trichotomy}. Hence $A$ is wild by Lemma \ref{L: surj alg}. 
\end{proof}

Furthermore, if the algebra is the path algebra $FQ$, the representation type depends on the quiver $Q$.  This is  described in the following two theorems.

\begin{theorem}[\cite{Gabriel:1972a}]\label{T: Gab} The path algebra $\F Q$ has finite type if and only if the underlying graph of $Q$ is a disjoint union of Dynkin diagrams of types $\mathbb{A}$, $\mathbb{D}$ or $\mathbb{E}$. In this case, up to isomorphism, the indecomposable $\F Q$-modules are in one-to-one correspondence with the positive roots of the associated root system.
\end{theorem}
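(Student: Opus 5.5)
Gabriel's theorem (Theorem \ref{T: Gab}) concerns the path algebra $FQ$ of a finite quiver without oriented cycles, so that $FQ$ is finite-dimensional and hereditary. The plan follows the classical route: first show that the representation type of $FQ$ depends only on the underlying graph, then use the Tits quadratic form to separate Dynkin from non-Dynkin graphs, and finally build the correspondence with positive roots via dimension vectors and reflection functors.

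Let $n=|Q_0|$. For a representation $V$ of $Q$, take the dimension vector $\underline{\dim}\,V=(\dim V_i)_{i\in Q_0}\in\NN_0^{n}$. The Tits form is
\[q_Q(x)=\sum_{i\in Q_0}x_i^2-\sum_{\gamma\in Q_1}x_{s(\gamma)}x_{t(\gamma)},\]
which depends only on the underlying graph. Since $FQ$ is hereditary and $\dim\Ext^1(S_i,S_j)$ equals the number of arrows $i\to j$, the Euler form gives
\[q_Q(\underline{\dim}\,V)=\dim\Hom(V,V)-\dim\Ext^1(V,V).\]

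\emph{Necessity.} Suppose the underlying graph is not a union of Dynkin diagrams. Then it contains an extended Dynkin subgraph $\Gamma$, so $q_Q$ is not positive definite: there is a vector $\delta\in\NN_0^n$, $\delta\neq 0$, with $q_Q(\delta)\le 0$. The representations of dimension vector $\delta$ form the affine space $\prod_{\gamma}\Hom(F^{\delta_{s(\gamma)}},F^{\delta_{t(\gamma)}})$. The group $\prod_i GL_{\delta_i}$ acts on it with orbits equal to isomorphism classes, and the scalars act trivially. Comparing dimensions gives
\[\dim\prod_i GL_{\delta_i}-1-\dim(\text{rep space})=q_Q(\delta)-1<0.\]
Hence there are infinitely many orbits, and so infinitely many isomorphism classes of modules of dimension vector $\delta$. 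Since only finitely many modules of bounded dimension can be built from finitely many indecomposables, $FQ$ has infinite type.

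\emph{Sufficiency and the bijection.} Now assume the graph is Dynkin, so $q_Q$ is positive definite. I would use the Bernstein--Gelfand--Ponomarev reflection functors $S_k^{\pm}$ at a sink or source $k$. They give a bijection between the indecomposables other than $S_k$ of $Q$ and those of $\sigma_kQ$, and they act on dimension vectors by the simple reflection $s_k$. Choose an admissible ordering of the vertices and form the Coxeter element $c$. Since the Weyl group is finite, every positive root is sent to a negative root by some power of $c$. Iterating the reflection functors shows that every indecomposable $V$ is carried to a simple module. Consequently $\underline{\dim}\,V$ is a positive root, and $V$ is determined by it, since the functors are invertible on the relevant indecomposables. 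Conversely, every positive root is reached from a simple root, which yields existence. Finiteness of the root system then gives finite type.

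The main obstacle is the reflection-functor step. One must check that $S_k^{\pm}$ preserve indecomposability away from $S_k$, are mutually inverse there, and realize $s_k$ on dimension vectors. One must also handle the combinatorics of admissible orderings, and show that the process terminates precisely because positive roots are finite in number. An alternative for the first half is Ringel's approach: $\Ext^1(V,V)=0$ forces the orbit to be open, so $V$ is determined by its dimension vector. In the paper itself I would simply cite \cite{Gabriel:1972a}, as it is a classical result.
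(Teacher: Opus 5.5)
The paper gives no proof of this statement. It is quoted from \cite{Gabriel:1972a} and used as a black box, together with Theorem \ref{T:tame} and Drozd's trichotomy, so your closing remark that you would simply cite it is exactly what the paper does. The outline you supply is a sound reconstruction of the standard argument.

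Your necessity half is the Tits orbit-dimension count, which is essentially how Gabriel argued. It needs $F$ to be infinite (here algebraically closed, as the paper assumes), because you use that finitely many orbits, each of dimension less than that of the irreducible representation space, cannot cover it. Your sufficiency half and the bijection with positive roots follow the Bernstein--Gelfand--Ponomarev reflection-functor proof rather than Gabriel's original one. That route gives a uniform, orientation-free argument which also constructs every indecomposable from a simple module. Ringel's alternative gives uniqueness cheaply via open orbits, but it needs the brick lemma and a separate existence argument.

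One step needs tightening. To show that every indecomposable $V$ is eventually carried to a simple module, you need more than a statement about roots. You must know that for \emph{every} nonzero $x\in\NN_0^n$ some $c^mx$ has a negative coordinate, because you apply this to $\underline{\dim}\,V$ before knowing it is a root. A statement about positive roots only is circular here. Finiteness of the set of positive roots does not by itself force termination either.

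The standard reason is that $c$ has finite order $h$ and no nonzero fixed vector. Write $c=s_n\cdots s_1$. If $cx=x$, then $s_1x=s_2\cdots s_nx$. The left side differs from $x$ only in the $e_1$-direction, and the right side only in the span of $e_2,\ldots,e_n$. So $s_1x=x$, and by induction $s_ix=x$ for all $i$. Hence $x$ lies in the radical of the symmetric form, which is zero when $q_Q$ is positive definite.

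Therefore $\sum_{i=0}^{h-1}c^ix$ is a $c$-fixed vector, hence $0$, and not all $c^ix$ can be non-negative. With this in place the rest of your sketch goes through as stated.
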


\begin{theorem}[\cite{Dlab/Ringel:1976a,Donovan/Freislich:1973}]\label{T:tame} Suppose that $Q$ is connected without oriented cycles. Then $\F Q$ is tame if and only if the underlying graph of $Q$ is a (simply laced) extended Dynkin diagram.
\end{theorem}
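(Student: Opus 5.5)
This is the classical Donovan--Freislich / Nazarova / Dlab--Ringel theorem. I would organise the proof around the Tits form of $Q$,
\[q_Q(x)=\sum_{i\in Q_0}x_i^2-\sum_{\gamma\in Q_1}x_{s(\gamma)}x_{t(\gamma)},\qquad x\in\Z^{Q_0},\]
which depends only on the underlying graph. A connected graph without loops is Dynkin exactly when $q_Q$ is positive definite. It is extended Dynkin exactly when $q_Q$ is positive semidefinite with radical $\Z\delta$, where $\delta$ is the sincere positive null root. Otherwise $q_Q$ is indefinite. Since $FQ$ is finite-dimensional (no oriented cycles), a module is just a representation of $Q$ with a dimension vector $d$.

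\textbf{Step 1: reductions.} If $Q'$ is a full subquiver of $Q$, then $FQ'$ is the quotient of $FQ$ by the ideal generated by $e_v$, $v\notin Q_0'$. So by Lemma \ref{L: surj alg}, wildness of $FQ'$ forces wildness of $FQ$. When the underlying graph is a tree, I would use the Bernstein--Gelfand--Ponomarev reflection functors. These give bijections between the non-simple indecomposables of $Q$ and of $\sigma_vQ$ that transform dimension vectors by simple reflections. Hence the representation type does not depend on the orientation. For the cycle $\widetilde{\A}_n$ with an acyclic orientation, reflections at sinks and sources reduce any orientation to a standard one, at least for the counting arguments below.

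\textbf{Step 2: ($\Leftarrow$) extended Dynkin implies tame.} Finite type is excluded by Theorem \ref{T: Gab}, so it remains to show tameness. Let $c$ be the Coxeter functor, the composite of reflections at all vertices in an admissible order. Every indecomposable $M$ falls into one of three classes:
\begin{itemize}
\item preprojective, $c^{-r}P$ for an indecomposable projective $P$;
\item preinjective, $c^{r}I$ for an indecomposable injective $I$;
\item regular, otherwise.
\end{itemize}
Preprojectives and preinjectives have real-root dimension vectors, which in each dimension are finite in number and each carry a unique indecomposable. This is because the defect $\langle\delta,-\rangle$ is nonzero on them and $c$ acts by translation modulo $\delta$. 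Regular modules form an abelian, uniserial category whose simple objects have dimension vectors at most $\delta$. The category decomposes into tubes indexed by $\mathbb{P}^1(F)$, all homogeneous except at most three. I would establish this by reducing to the Kronecker quiver for the homogeneous tubes. Consequently, in each dimension $d$ almost all indecomposables lie in the families indexed by $\mathbb{P}^1$, only for $d\in\NN\delta$, and there are finitely many such families. That is exactly tameness.

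\textbf{Step 3: ($\Rightarrow$) other graphs are wild.} If the graph is neither Dynkin nor extended Dynkin, it properly contains an extended Dynkin graph. It therefore contains a full subgraph from the finite list of minimal such graphs:
\begin{itemize}
\item a double edge attached to a vertex, or three edges between two vertices;
\item $\widetilde{\D}_n$ or $\widetilde{\E}_{6,7,8}$ extended by one vertex;
\item $\widetilde{\A}_n$ with an extra vertex attached.
\end{itemize}
By Step 1 it suffices to show each of these is wild. For each, I would write down an explicit $FQ$-$F\langle X,Y\rangle$-bimodule that is free of finite rank on the right, for instance realising the $3$-Kronecker quiver, or a star with $\delta$-sized arms carrying $X$ and $Y$. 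I would then check that it preserves indecomposability and isomorphism classes. By Theorem \ref{T:trichotomy} the algebra is then not tame.

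I expect the main obstacle to be Step 3, namely verifying the wild embeddings for the whole list of minimal hyperbolic graphs. I would try first to reduce them uniformly, via reflection functors and one-point extensions of a tame concealed algebra by a regular module of dimension $\delta$, to the single case of the $3$-Kronecker quiver.
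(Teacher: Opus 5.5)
The paper gives no proof of this statement. It is quoted from Dlab--Ringel and Donovan--Freislich and used as a black box, which is all the paper needs. Your outline reconstructs the classical argument of those sources: Coxeter functors, the defect and the $\mathbb{P}^1$-family of tubes for $\Leftarrow$, and reduction to minimal non-(extended) Dynkin graphs for $\Rightarrow$. Its logical architecture is sound and would make the result self-contained. However, both deep inputs (the tube structure and the wildness of the minimal graphs) are announced rather than proved, and some details need fixing:
\begin{itemize}
\item The reflection functor at a sink $v$ is a bijection on indecomposables not isomorphic to $S_v$, not on non-simple ones. For $u\to v$ it sends the simple $S_u$ to the indecomposable of dimension vector $(1,1)$.
\item Reflections cannot bring every acyclic orientation of $\widetilde{\A}_n$ to one standard orientation, because the number of clockwise arrows is invariant.
\item Your list of minimal graphs is exhaustive only for subgraphs that need not be full. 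In a triangle with one doubled edge, the only full subgraph containing the double edge is the whole graph. So Step 1 should also use $FQ/(\gamma)\cong F(Q\setminus\{\gamma\})$ for an arrow $\gamma$.
\end{itemize}

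The step that would actually fail is the uniform reduction you propose for Step 3. A one-point extension of a tame concealed algebra by a module of dimension vector $\delta$ from a homogeneous tube is a tubular extension, and it is frequently tame. Take the Kronecker algebra $1\rightrightarrows 2$ with arrows $a,b$. Every regular module of dimension vector $\delta=(1,1)$ is a regular simple $R$ on which $a,b$ act by scalars $\lambda_0,\lambda_1$, and $\mathrm{Hom}(R,V)=\{x\in V_1:(\lambda_1a-\lambda_0b)x=0\}$. So the extension is $v\to 1\rightrightarrows 2$ with a single zero relation, which is gentle, hence tame. Thus for the minimal graph ``double edge plus a vertex'' this route never reaches the $3$-Kronecker quiver. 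Note also that your minimal quivers are one-point (co)extensions of $FE$ by an indecomposable projective (injective) module, which is preprojective (preinjective), not regular.

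Moreover, the theorem only needs non-tameness, which is much easier than producing a wild bimodule. Suppose $v$ is joined to $u\in E$.
\begin{itemize}
\item Take homogeneous regular simples $R_\lambda,R_\mu$ of $E$ with $\lambda\neq\mu$, put $F$ at $v$, and let the arrow between $v$ and $u$ have nonzero components on both $(R_\lambda)_u$ and $(R_\mu)_u$.
\item Since $\mathrm{Hom}(R_\lambda,R_\mu)=0$, these are bricks of dimension vector $x=2\delta+e_v$, and they are non-isomorphic for distinct $\{\lambda,\mu\}$.
\item A dimension count in the representation variety shows that finitely many one-parameter families cannot exhaust them. A one-parameter family sweeps out a set of dimension at most $\dim\mathrm{GL}(x)$, because stabilisers contain the scalars. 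This family sweeps out a set of dimension $\dim\mathrm{GL}(x)+1$.
\item For the triple Kronecker quiver, dimension vector $(1,1)$ already gives a $\mathbb{P}^2$ of indecomposables.
\end{itemize}
This proves $\Rightarrow$ without constructing wild bimodules or invoking Drozd's theorem.
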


\subsection{Basic algebra}

An $F$-algebra is basic if all simple $A$-modules are one-dimensional. For example, every path algebra is basic. The notion of basic algebra is crucial because  every finite-dimensional $F$-algebra is Morita equivalent to a basic $F$-algebra. Furthermore, the structure of such a basic algebra can be described using the well-known Gabriel's theorem. More precisely, it is the quotient of the path algebra of its Ext-quiver by some admissible ideal. 


To describe the theorem of Gabriel, we need the following notation. The ideal of a path algebra $FQ$ generated by all arrows is denoted by $FQ^+$. In other words, $FQ^+$ has a basis consisting of all paths of positive lengths. Therefore, for each $j\in \NN_0$, $(FQ^+)^j$ is the ideal consisting of all paths of lengths at least $j$. 

\begin{theorem}[\cite{Gabriel:1979a}]\label{T:Gabriel} Let $A$ be a basic $F$-algebra and $Q$ be the Ext-quiver of $A$. Then $A\cong F Q/I$ where $I$ is an ideal of $FQ$ such that $(F Q^+)^n\subseteq I\subseteq (F Q^+)^2$ for some integer $n\geq 2$.
\end{theorem}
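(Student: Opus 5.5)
The plan is to construct a surjection $FQ\to A$ directly and show that its kernel lies between a power of $FQ^+$ and $(FQ^+)^2$. Since $A$ is basic and $F$ is algebraically closed, $A/\Rad(A)\cong F\times\cdots\times F$, one copy for each simple module $S_1,\ldots,S_m$. Fix a complete set of primitive orthogonal idempotents $e_1,\ldots,e_m$ with $P_i\cong Ae_i$. We have $\Ext^1_A(S_i,S_j)\cong e_j\Rad(A)e_i/e_j\Rad^2(A)e_i$. For each pair $(i,j)$, choose elements of $e_j\Rad(A)e_i$ whose images form a basis of this quotient. By definition of the Ext-quiver $Q$, these chosen elements correspond bijectively to the arrows $i\to j$.

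Next, define an algebra homomorphism $\varphi:FQ\to A$. Send $e_v$ to $e_v$, and send each arrow $\gamma$ to its chosen element $a_\gamma$. Because path algebras are free on the quiver data subject to idempotent relations, this gives a well-defined algebra map. It preserves products of paths: for composable arrows the element $a_{\gamma'}a_\gamma$ lies in $e_{t(\gamma')}Ae_{s(\gamma)}$, and non-composable products vanish by orthogonality.

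The main step is surjectivity, which I would prove by a Nakayama-type argument. By construction, $\Rad(A)=\sum_{i,j}e_j\Rad(A)e_i$ is spanned by the $a_\gamma$ together with $\Rad^2(A)$. Inductively, $\Rad^k(A)$ is spanned by images of paths of length $k$ together with $\Rad^{k+1}(A)$. Since $\Rad(A)$ is nilpotent, say $\Rad^n(A)=0$, we get $\Rad(A)=\varphi(FQ^+)$. In addition, $A=\bigoplus_i Fe_i\oplus\Rad(A)$, because $A$ is basic and so $A/\Rad(A)$ is spanned by the images of the $e_i$. Hence $\varphi$ is surjective. Set $I=\Ker\varphi$.

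Finally, I check the two containments for $I$. First, $\varphi((FQ^+)^n)\subseteq\Rad^n(A)=0$, which gives $(FQ^+)^n\subseteq I$; if $n<2$ we may replace $n$ by $2$. For $I\subseteq (FQ^+)^2$, take $x\in I$ and write $x=\sum c_ve_v+\sum c_\gamma\gamma+y$ with $y\in(FQ^+)^2$. Modulo $\Rad(A)$ the image of $x$ is $\sum c_ve_v$, so every $c_v=0$. Modulo $\Rad^2(A)$ the image is then $\sum c_\gamma a_\gamma$. Its components $e_j(\cdot)e_i$ are linear combinations of the chosen basis elements of $e_j\Rad(A)e_i/e_j\Rad^2(A)e_i$, so every $c_\gamma=0$. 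Thus $x=y\in(FQ^+)^2$.

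I expect the main obstacle to be surjectivity, specifically making the induction on radical layers precise. The decomposition into $e_j(\cdot)e_i$ components is exactly what guarantees that the chosen arrow elements generate each layer $\Rad^k(A)/\Rad^{k+1}(A)$.
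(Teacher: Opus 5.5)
Your proof is correct. The paper gives no proof of this statement: it quotes Gabriel \cite{Gabriel:1979a} and uses the result as a black box. For example, it is used to force $I=0$ when $Q$ has no paths of length two (Lemma \ref{L:DExt2}), and to get the isomorphism from a dimension count (Lemma \ref{L:n5}).

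What you wrote is the standard textbook argument:
\begin{enumerate}[(1)]
\item lift bases of $e_j\Rad(A)e_i/e_j\Rad^2(A)e_i$;
\item define $\varphi$ via the universal property of $FQ$ as the tensor algebra of the $F^{Q_0}$-bimodule $FQ_1$;
\item obtain $\Rad(A)=\varphi(FQ^+)$ from $\Rad^k(A)=\varphi((FQ^+)^k)+\Rad^{k+1}(A)$ together with nilpotence;
\item deduce $\Ker\varphi\subseteq (FQ^+)^2$ by comparing modulo $\Rad(A)$ and then, componentwise in $e_j(\cdot)e_i$, modulo $\Rad^2(A)$.
\end{enumerate}

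Two small checks, both of which you handle correctly. First, with the paper's right-to-left path convention and the identification $\Ext^1_A(S_i,S_j)\cong e_j\Rad(A)e_i/e_j\Rad^2(A)e_i$, sending an arrow $i\to j$ into $e_jAe_i$ is exactly what makes the quiver come out as $Q$ rather than its opposite. Second, the step $c_v=0$ uses the linear independence of the images $\bar e_v$ in $A/\Rad(A)\cong F^m$. This holds because they are $m$ orthogonal idempotents, each nonzero since a nonzero idempotent cannot lie in the nilpotent ideal $\Rad(A)$.

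Compared with the bare citation, your route makes explicit that the only inputs are $A/\Rad(A)\cong F^m$ (basicness) and the nilpotence of $\Rad(A)$ (finite dimensionality). With the paper's definition of basic (all simple modules one-dimensional), algebraic closure of $F$ is not even needed for this step.
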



\section{Descent Algebra}\label{S:Descent}

In this section, we review the Solomon descent algebra for a Coxeter group. We refer the reader to the references \cite{Atkinson/Pfeiffer/vanWilligenburg:2002a, Atkinson/vanWilligenburg:1997a,Geck/Pfeiffer:2000, Humphreys:1990, Solomon:1976a} for further details. After introducing the background knowledge for general finite Coxeter groups, we focus particularly on types $\B$ and $\D$. 

\subsection{General theory} Let $(W,S)$ be a finite Coxeter system and $\ell:W\to\NN$ be the length function with respect to $S$. For a subset $J\subseteq S$, let $W_J$ be the subgroup of $W$ generated by $J$, 
a  (standard) parabolic subgroup.  Let  $X_J$ be the distinguished set of left coset representatives of $W_J$ in $W$ consisting of minimal length representatives, that is, \[X_J=\{w\in W: \text{$\ell(ws)>\ell(w)$ for all $s\in J$}\}.\] The set $X_J^{-1}=\{w^{-1}:w\in X_J\}$ is thus the distinguished set of minimal length right coset representatives of the parabolic subgroup $W_J$ in $W$. For another subset $K\subseteq S$, $X_{JK}:=X_J^{-1}\cap X_K$ is therefore a set of double coset representatives of $(W_J,W_K)$ in $W$.

Let $J\subseteq S$ and we define the element $x_J=\sum_{w\in X_J} w \in\Z W$. Solomon \cite{Solomon:1976a} showed that, over $\Z$, for any two subsets $J,K$ of $S$,
\begin{equation}\label{Eq:DesM}
x_Jx_K=\sum_{L\subseteq S}a_{JKL}x_L
\end{equation} where $a_{JKL}$ is the number of elements $w\in X_{JK}$ such that $L=w^{-1}Jw\cap K$. In other words, the $\Z$-span of the set $\Omega:=\{x_J:J\subseteq S\}$ is a $\Z$-subalgebra of the group algebra $\Z W$ and it is called the descent algebra of $W$ (over $\Z$). We denote it by $\Des_\Z(W)$. Furthermore, $\Omega$ is a free $\Z$-basis for $\Des_\Z(W)$. Since the structure constants $a_{JKL}$ are integers, for any integral domain $R$, considering $a_{JKL}\cdot 1_R$, we obtain the descent algebra \[\Des_R(W):=R\otimes_\Z\Des_\Z(W)\] which is considered as a subalgebra of the group algebra $RW$.



For each $J\subseteq S$, let $\varphi_J$ be the permutation character of the induced module $\Z_{W_J}{\uparrow^W}$ from the trivial module for $W_J$ to $W$, which the value $\varphi_J(x)$ counts the fixed points of $x\in W$ in the action on the cosets  $W/W_J$. So $\varphi_J$ takes integer values on the elements of $W$. Let $\ccl(W)$ be the $\Z$-span of the set $\{\varphi_J:J\subseteq S\}$. We have the well-known Mackey formula
\begin{equation}\label{Eq:Mackey}
\varphi_J\varphi_K=\sum_{L\subseteq S}a_{JKL}\varphi_L
\end{equation} where $a_{JKL}$'s are precisely the integers we have obtained earlier in Equation \ref{Eq:DesM}. 
Equations \ref{Eq:DesM} and \ref{Eq:Mackey} show that there is  a homomorphism of $\Z$-algebras \[\theta:\Des_\Z(W)\to  \ccl(W)\] given by $\theta(x_J)=\varphi_J$ (see 
\cite{Solomon:1976a}). Solomon also showed that the kernel of $\theta$ is the radical of $\Des_{\Z}(W)$, and that it is spanned by the set of all $x_K-x_J$ where $J$ and $K$ are conjugate in $W$.

\medskip

Let $\overline{\ccl(W)}$ be the $F$-span of the set $\{\varphi^{F}_J:J\subseteq S\}$ where \[\varphi^{F}_J(x)=\varphi_J(x)\cdot 1_F\] for each $x\in W$.
 Tensoring with $F$ and reducing modulo $p$, the map $\theta$ gives rise to an $F$-algebra homomorphism $\theta_F:\Des_F(W)\to \overline{\ccl(W)}$ where $\theta_F(x_J)=\varphi^{F}_J$. The algebra $\overline{\ccl(W)}$ is
 always commutative semisimple.

 The following generalisation of Solomon's description of the radical $\Rad(\Des_F(W))$  was proved by  Atkinson--Pfeiffer--van Willigenburg.

\begin{theorem}[{\cite[Theorem 3]{Atkinson/Pfeiffer/vanWilligenburg:2002a} and \cite[Theorem 3]{Solomon:1976a}}]\label{T: Sol Radical} Let $(W,S)$ be a Coxeter system. Then $\Rad(\Des_F(W))$ is spanned by elements $x_J - x_K$ such that $J $
and $K$ are conjugate in $W$, and together with elements $x_L$ such that $p\mid [\N_W(W_L):W_L]$. Furthermore, $\Rad(\Des_F(W))=\Ker(\theta_F)$. In particular, the simple modules of $\Des_F(W)$  are 1-dimensional.
\end{theorem}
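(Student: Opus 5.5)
Since $\theta_F$ is an algebra homomorphism onto the commutative semisimple algebra $\overline{\ccl(W)}$, we have $\Rad(\Des_F(W))\subseteq\Ker(\theta_F)$ automatically: the image of the radical is a nilpotent ideal of a semisimple algebra, hence zero. So the plan is to prove the reverse inclusion $\Ker(\theta_F)\subseteq\Rad(\Des_F(W))$, and to identify the kernel with the span $N$ of the listed elements. Once $\Rad=\Ker(\theta_F)$ is known, $\Des_F(W)/\Rad\cong\overline{\ccl(W)}$ is commutative semisimple over the algebraically closed field $F$. It is therefore a product of copies of $F$, and so every simple module is $1$-dimensional.

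\textbf{Step 1: the listed elements lie in $\Ker(\theta_F)$.} If $J,K$ are conjugate in $W$, then $W_J,W_K$ are conjugate subgroups and $\varphi_J=\varphi_K$, so $\theta(x_J-x_K)=0$ already over $\Z$. For $x_L$ with $p\mid[\N_W(W_L):W_L]$, note that $\varphi_L(x)$ counts the cosets $gW_L$ fixed by $x$. Equivalently, it counts the $g$ with $g^{-1}xg\in W_L$, divided by $|W_L|$. The group $\N_W(W_L)/W_L$ acts freely on the right of this fixed-point set via $gW_L\mapsto gnW_L$. Hence $\varphi_L(x)$ is divisible by $[\N_W(W_L):W_L]$, so $\varphi^F_L=0$. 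Thus $N\subseteq\Ker(\theta_F)$.

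\textbf{Step 2: $N$ is a nilpotent ideal, hence $N\subseteq\Rad$.} I would show that $N$ is an ideal and that $N\subseteq\Ker\theta_F$ forces nilpotence. Nilpotence can be obtained through a filtration argument: order the subsets $J$ by $|J|$ and use Solomon's formula (\ref{Eq:DesM}). Each $L$ occurring in $x_Jx_K$ satisfies $L\subseteq K$, and $L=K$ only when $w^{-1}Jw\supseteq K$. This gives a triangularity: right multiplication by an element of $\Ker\theta$ lowers the "rank" $|L|$. Hence a product of $|S|+1$ kernel elements vanishes, an argument in the style of Solomon's original proof. I would need to check that this triangular argument survives reduction mod $p$, including the new generators $x_L$ with $p\mid[\N_W(W_L):W_L]$. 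The key computation is the coefficient $a_{LKK}$ of $x_K$ in $x_Lx_K$ when $K$ is conjugate to $L$. That coefficient counts $w\in X_{LK}$ with $w^{-1}Lw=K$, which is a multiple of $[\N_W(W_L):W_L]$ and so vanishes mod $p$.

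\textbf{Step 3: dimension count.} Choose one representative $J$ from each conjugacy class. Then $\Des_F(W)/N$ is spanned by the $x_J$ with $p\nmid[\N_W(W_J):W_J]$. On the other side, I would show that the corresponding $\varphi^F_J$ are linearly independent in $\overline{\ccl(W)}$. For this, evaluate them at Coxeter elements $c_K$ of the $W_K$ over the same representatives. This gives a triangular matrix with diagonal entries $\varphi_J(c_J)=[\N_W(W_J):W_J]$, a standard fact I would take from Solomon/Geck--Pfeiffer, and these entries are nonzero in $F$. Therefore $\dim\Des_F(W)/N\le\dim\overline{\ccl(W)}=\dim\Des_F(W)/\Ker\theta_F$, which combined with $N\subseteq\Ker\theta_F$ gives $N=\Ker\theta_F$. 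With Step 2 this yields $\Rad=\Ker\theta_F=N$.

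The main obstacle is Step 2, establishing that $N$ is a nilpotent ideal modulo $p$. Steps 1 and 3 are character computations, while the triangularity argument in Step 2 requires care with which $x_L$ appear in products.
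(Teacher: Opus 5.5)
Your overall architecture is the standard one: Solomon's argument as extended by Atkinson--Pfeiffer--van Willigenburg, which the paper only cites and does not prove. The ingredients are that $\Rad\subseteq\Ker(\theta_F)$ because the image is semisimple, that the span $N$ of the listed elements lies in $\Ker(\theta_F)$, that $\Ker(\theta_F)$ is nilpotent, and that the table of marks gives $\dim\Des_F(W)/N\le|\R_p|\le\dim\overline{\ccl(W)}$. Your opening reduction, Step 1 (the free action of $\N_W(W_L)/W_L$ on fixed cosets) and Step 3 are fine.

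\textbf{The gap is in Step 2}, which carries all the content of $\Ker(\theta_F)\subseteq\Rad$. You know that every $L$ in $x_Jx_K$ satisfies $L\subseteq K$, with $L=K$ exactly for those $w\in X_{JK}$ with $K\subseteq w^{-1}Jw$. That alone does not explain why a kernel element should kill the diagonal coefficient. The missing ingredient is the identity $a_{JKK}=\varphi_J(c_K)$ for \emph{all} $J,K$. This is Lemma \ref{L:beta}, and it rests on the fact that $c_K$ fixes a coset $gW_J$ only if all of $W_K$ does. You never invoke it.

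The substitute you propose, computing $a_{LKK}$ only for $K$ conjugate to $L$, does not suffice. The filtration argument needs the coefficient of $x_K$ in $x_Lx_K$ to vanish in $F$ for every $K$; already $K=\emptyset$ gives the nonzero integer $a_{L\emptyset\emptyset}=[W:W_L]$. Likewise, for $x_J-x_{J'}$ you need $a_{JKK}=a_{J'KK}$ for every $K$, which is again the identity.

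With the identity, Step 2 becomes short and needs no generator-by-generator check. For $y=\sum_Jc_Jx_J\in\Ker(\theta_F)$, the coefficient of $x_K$ in $yx_K$ is $\sum_Jc_J\varphi^F_J(c_K)=\theta_F(y)(c_K)=0$. Hence $y\,X(m)\subseteq X(m+1)$, where $X(m)=\Span\{x_L:|L|\le|S|-m\}$, and so $(\Ker(\theta_F))^{|S|+1}=0$ directly over $F$. This is exactly the argument the paper runs in the proof of Lemma \ref{L:p2ExtQ}. There is then no ``reduction mod $p$'' issue. Nor do you need to show separately that $N$ is an ideal, since that follows from $N=\Ker(\theta_F)$ in Step 3.

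Note also that it is \emph{left} multiplication by kernel elements that lowers the filtration, not right. In $\A_2$ one has $x_{\{1\}}(x_{\{1\}}-x_{\{2\}})=x_{\{1\}}-x_{\{2\}}$, while $(x_{\{1\}}-x_{\{2\}})x_{\{1\}}=0$.
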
 
 
\bigskip

The task is now to understand the simple modules, and the essential input are the permutation characters $\varphi_J$ for $J\subseteq S$, which counts the number of fixed points on the coset spaces $W/W_J$. 
For subsets $J$ and $K$ of $S$, let 
$\beta_{JK} = |{\rm Fix}_{W/W_J}(W_K)|$.  The following shows that these can be computed using a Coxeter element $c_K$ of $W_K$,  which is the product of all elements of $K$ in some order.

\begin{lemma}
[{\cite[Lemma 1]{Atkinson/Pfeiffer/vanWilligenburg:2002a}}]\label{L:beta}
Let $(W,S)$ be a Coxeter system and $J,K$ be subsets of $S$. We have \ 
\begin{align*}
\beta_{JK}& = [\N_W(W_J):W_J]\cdot |\{W^\omega_J:\omega\in W,\ W_K\subseteq W_J^\omega\}| \cr
&= |\{ w\in X_J^{-1}\cap X_K: J^w\cap K=K\}|\cr
&= a_{JKK}\cr
&= \varphi_J(c_K).
\end{align*}
Furthermore, $\beta_{JJ} =  [\N_W(W_J):W_J]\neq 0$ and $\beta_{JJ}$ divides $\beta_{JK}$ for each $K\subseteq S$. Also, if $\beta_{JK}\neq 0$, then $K\subseteq_W J$. 
\end{lemma}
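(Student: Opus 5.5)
The plan is to work throughout with the action of $W$ on the coset space $W/W_J$. Then $\beta_{JK}$ is the number of cosets $\omega W_J$ with $W_K\subseteq \omega W_J\omega^{-1}$. I will write $W_J^\omega$ for $\omega W_J\omega^{-1}$ and adjust the conjugation convention at the end to match the statement.

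\emph{First equality.} Consider the surjection $\omega W_J\mapsto W_J^\omega$ from fixed cosets to the set of conjugates of $W_J$ containing $W_K$. Two cosets $\omega W_J$ and $\omega' W_J$ have the same image exactly when $\omega^{-1}\omega'\in \N_W(W_J)$. So every fibre has size $[\N_W(W_J):W_J]$, which gives the first line.

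\emph{Second and third equalities.} A coset $\omega W_J$ is fixed by $W_K$ iff the double coset $W_K\omega W_J$ is a single left coset of $W_J$. Write the double coset through its distinguished representative $w$, the unique element of minimal length in $W_K\omega W_J$. I will use Kilmoyer's theorem: $W_K\cap wW_Jw^{-1}=W_{K\cap wJw^{-1}}$, hence
\[|W_K w W_J|=|W_K|\,|W_J|/|W_{K\cap wJw^{-1}}|.\]
Thus the double coset is a single $W_J$-coset iff $K\subseteq wJw^{-1}$. This counts the distinguished representatives $w$ with $J^w\cap K=K$, which is the second line (after matching $X_J^{-1}\cap X_K$ with the paper's convention for $J^w$). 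By the definition in Equation \ref{Eq:DesM}, that count is exactly $a_{JKK}$.

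\emph{Fourth equality.} Here $\varphi_J(c_K)$ is the number of cosets fixed by the single element $c_K$. Every coset fixed by $W_K$ is fixed by $c_K$. Conversely, suppose $c_K\in \omega W_J\omega^{-1}$. Then $c_K$ lies in $W_K\cap \omega W_J\omega^{-1}$, and I need to show this subgroup is all of $W_K$. Reducing to the distinguished representative $w$ of $W_K\omega W_J$ and applying Kilmoyer's theorem again, the intersection is conjugate inside $W_K$ to the standard parabolic subgroup $W_{K\cap wJw^{-1}}$ of $W_K$. The key input is that a Coxeter element of $W_K$ lies in no proper parabolic subgroup of $W_K$. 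This holds because every element of $W_{K'}$ with $K'\subsetneq K$ fixes a nonzero vector in the reflection representation of $W_K$, and so does every conjugate of such an element. On the other hand, $c_K$ has no eigenvalue $1$ on that space, since $c_K-1$ is invertible by the classical computation (Humphreys, Coxeter elements). Hence $K\cap wJw^{-1}=K$, the intersection is $W_K$, and the coset is fixed by $W_K$.

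\emph{Consequences.} Take $K=J$ in the first formula. The only conjugate $W_J^\omega$ containing $W_J$ is $W_J$ itself, by comparing orders. So $\beta_{JJ}=[\N_W(W_J):W_J]\neq0$, and the first formula shows $\beta_{JJ}$ divides every $\beta_{JK}$. Finally, if $\beta_{JK}\neq0$, the second formula gives some $w$ with $K\subseteq wJw^{-1}$, i.e.\ $K\subseteq_W J$.

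I expect the main obstacle to be the fourth equality, specifically the fact that Coxeter elements are elliptic. Everything else is double-coset bookkeeping once Kilmoyer's intersection theorem is invoked.
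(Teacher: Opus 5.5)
Your proof is correct. The paper gives no argument of its own for this lemma; it quotes it from Atkinson--Pfeiffer--van Willigenburg. Your route is the standard proof of that result: counting fixed cosets over the conjugates of $W_J$, using Kilmoyer's theorem for distinguished double coset representatives, and using the fact that a Coxeter element of $W_K$ lies in no proper parabolic subgroup of $W_K$.
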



Let $\R$ be a set of representatives for the conjugacy classes of subsets of $S$. Then the parabolic table of marks is defined as the matrix $M^c(W) = (\beta_{JK})_{J, K\in \R}$. By \cite{Atkinson/Pfeiffer/vanWilligenburg:2002a}, with suitable ordering on $\R$, $M^c(W)$ is lower triangular of rank $|\R|$. We fix such a total order for $\R$ throughout.  For the number $p$ (where $p$ is either a prime or $\infty$), let $\R_p$ be the subset of $\R$ consisting of $J$ such that $p\nmid [N_W(W_J):W_J]$.  Then the size of $\R_p$ is the rank of $M^c(W)$ modulo $p$. Notice that $\R_\infty=\R$.

Recall the map $\theta_F$ defined earlier, and  we set $\theta= \theta_{\Q}$. For each $K\in \R_p$, define the map 
$\lambda_K: \Des_{F}(W) \to  F$ by, for any $x\in \Des_F(W)$,  
\[\lambda_K(x) = \theta_F(x)(c_K).\] Since $\theta_F$ is a homomorphism, it follows that $\lambda_K$ is a homomorphism, and hence gives rise to a 1-dimensional representation of $\Des_F(W)$. 

The map $\lambda_K$ is completely determined by its values on the basis elements $x_J$, since
$\lambda_K(x_J) = \theta_F(x_J)(c_K) = \varphi_J^F(c_K) = \beta_{JK}\cdot 1_F$, and these values of $\lambda_K$ form the column of the matrix $M^c(W)$ indexed by $K$. We denote the one-dimensional simple module over $F$ induced by $\lambda_K$ by $M_{K, F}$. We obtain the following:

\begin{lemma}[{\cite[Lemmas 4]{Atkinson/Pfeiffer/vanWilligenburg:2002a}}]\label{L: matrix M_F} For each $K\in \R_p$, the simple $\Des_F(W)$-module $M_{K,F}$ is one-dimensional with $x\in \Des_F(W)$ acts via the multiplication by $\theta_F(x)(c_K)\in F$ where $c_K$ is a Coxeter element of $W_K$. 
\end{lemma}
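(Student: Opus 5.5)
The plan is to deduce the statement directly from Theorem \ref{T: Sol Radical} together with the properties of the parabolic table of marks in Lemma \ref{L:beta}.

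\emph{Step 1: $M_{K,F}$ is a well-defined one-dimensional module.} Fix $K\in\R_p$. By construction $\theta_F:\Des_F(W)\to\overline{\ccl(W)}$ is an $F$-algebra homomorphism. Evaluation at $c_K$ is an algebra map $\overline{\ccl(W)}\to F$, since $\overline{\ccl(W)}$ consists of $F$-valued functions on $W$ with pointwise operations. Hence $\lambda_K$ is an algebra homomorphism. It is unital: $\theta_F(x_\emptyset)$ is the permutation character of the regular module, which does not obviously give $1$. So instead I would use $x_S=1$, which gives $\varphi_S=1$ and $\lambda_K(x_S)=1$. Thus $F$ with $x$ acting as multiplication by $\lambda_K(x)=\theta_F(x)(c_K)$ is a one-dimensional, hence simple, $\Des_F(W)$-module. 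This is $M_{K,F}$, and the statement that $x$ acts by $\theta_F(x)(c_K)$ holds by definition.

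\emph{Step 2: the modules $M_{K,F}$, $K\in\R_p$, are the simple modules and are pairwise non-isomorphic.} Non-isomorphism amounts to showing that the characters $\lambda_K$ are distinct. Their values on the basis $x_J$ are the columns $(\beta_{JK}\cdot 1_F)_J$ of $M^c(W)$ reduced mod $p$. The matrix $M^c(W)$ is lower triangular and $\beta_{KK}=[\N_W(W_K):W_K]$ is nonzero mod $p$ exactly for $K\in\R_p$. Restricting to the rows and columns indexed by $\R_p$ therefore gives a triangular matrix with unit diagonal entries, so these columns are linearly independent and in particular distinct.

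For completeness, every simple module is one-dimensional by Theorem \ref{T: Sol Radical}, and it factors through $\Des_F(W)/\Ker\theta_F\cong\operatorname{im}\theta_F=\overline{\ccl(W)}$. That algebra is commutative semisimple, and its dimension equals the rank of $M^c(W)$ mod $p$, namely $|\R_p|$. So there are exactly $|\R_p|$ simple modules, and the $M_{K,F}$ exhaust them.

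\emph{Main obstacle.} The delicate point is the triangularity argument in Step 2, which needs the ordering of $\R$ fixed earlier and the divisibility $\beta_{JJ}\mid\beta_{JK}$ from Lemma \ref{L:beta}. This rules out coincidences of columns after reduction mod $p$.
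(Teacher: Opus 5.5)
Your Step 1 is exactly the paper's argument: the paragraph preceding the lemma defines $\lambda_K$ as $\theta_F$ followed by evaluation at $c_K$, notes it is an algebra homomorphism because $\theta_F$ is, and takes $M_{K,F}$ to be the resulting one-dimensional module, so the statement is essentially definitional (your check that $x_S=1$ maps to $1$ is a welcome extra). Step 2 goes beyond what is stated, since the paper takes the parametrisation of simples from Atkinson--Pfeiffer--van Willigenburg. If you keep it, the equality $\dim\overline{\ccl(W)}=\mathrm{rank}(M^c(W)\bmod p)$ needs an argument, because $M^c(W)$ only records values at the $c_K$; it follows from the spanning set of $\Ker\theta_F$ in Theorem \ref{T: Sol Radical}, which gives $\dim\le|\R_p|$, together with your $|\R_p|$ distinct characters, which give $\ge$.
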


For any $K\in\R$, let $M_{K,\Z}$ be the $\Z$-free of rank one $\Des_\Z(W)$-module where $x\in \Des_\Z(W)$ acts via the multiplication by $\theta(x)(c_K)\in\Z$. So $M_{K,\Z}\otimes_\Z F\cong M_{K,F}$ if $K\in \R_p$. Also, by Lemma \ref{L: matrix M_F}, $M_{K,\Z}\otimes_\Z F\cong M_{K',\Z}\otimes_\Z F$ if and only if $\varphi^F_J(c_K)=\varphi^F_J(c_{K'})$ for all $J\subseteq S$.

The total order for $\R_\infty=\R$ gives rise to a total order for the subset $\R_p$. The decomposition matrix $D$ of the descent algebra of $W$ is the matrix, row-wise (respectively, column-wise) labelled by $\R_\infty$ (respectively, $\R_p)$, with the entries \[d_{J,K}=\left \{\begin{array}{ll}1&\text{if $\varphi^F_L(c_J)=\varphi^F_L(c_K)$ for all $L\subseteq S$,}\\ 0&\text{otherwise,}
\end{array}\right .\] where $J\in \R_\infty$ and $K\in\R_p$. The Cartan matrix for the descent algebra is related to the decomposition matrix given by the following result of Atkinson--Pfeiffer--van Willigenburg. 

\begin{theorem}[{\cite[Theorem 8]{Atkinson/Pfeiffer/vanWilligenburg:2002a}}]\label{T: APW} Fix a total order for the set $\R$ (and hence for the subset $\R_p$). Let $C$ be the Cartan matrix of $\Des_\Q(W)$ and $D$ be the decomposition matrix. Then the Cartan matrix of $\Des_{\mathbb{F}_p}(W)$ is $\wt{C}=D^\top CD$.
\end{theorem}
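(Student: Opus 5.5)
The plan is to use a $p$-modular system and the standard Brauer-reciprocity argument, adapted to the basic algebra $\Des(W)$. The conditions $d_{J,K}=1$, $d_{J',K}=1$ force $\varphi^F_L(c_J)=\varphi^F_L(c_{J'})$ for all $L$, so $J=J'$ by the rank statement for $M^c(W)$ modulo $p$. Hence each row of $D$ has at most one nonzero entry. I read $C$ with $C_{JK}=\dim_{\Q} e_J\Des_{\Q}(W)e_K$, which is the multiplicity of $M_{J,\Q}$ in the projective cover of $M_{K,\Q}$, and similarly for $\wt C$ over $\mathbb{F}_p$.

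\textbf{Setup.} Let $(K,\cO,k)$ be a $p$-modular system with $\cO$ complete, for instance $\cO=\Z_p$, $K=\Q_p$, $k=\mathbb{F}_p$. Put $B=\Des_\cO(W)$, which is $\cO$-free with basis $\Omega$, and set $B_K=B\otimes_\cO K$ and $B_k=B\otimes_\cO k$. Both are basic, since all simples are one-dimensional by Theorem~\ref{T: Sol Radical}. Cartan numbers do not change under field extension, because $\Q$ and $\mathbb{F}_p$ are splitting fields and the $\lambda_K$ are defined over the prime field. So it suffices to compute over $K$ and $k$.

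\textbf{Step 1: lifting idempotents.} For each $L\in\R_p$, let $\bar f_L$ be a primitive idempotent of $B_k$ with $B_k\bar f_L$ the projective cover of $M_{L,k}$. Since $\cO$ is complete, I lift the orthogonal decomposition $1=\sum_L \bar f_L$ to orthogonal idempotents $f_L\in B$. Then $\hat P_L:=Bf_L$ is an $\cO$-free projective lattice with $\hat P_L\otimes k\cong B_k\bar f_L$.

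Next I decompose $f_L$ in $B_K$ as a sum of primitive idempotents. The number of those belonging to $M_{J,K}$ equals $\dim_K f_LM_{J,K}$. Since $f_L$ is an idempotent acting on the rank-one lattice $M_{J,\cO}$, this equals $\dim_k \bar f_L(M_{J,\cO}\otimes k)$. Now $M_{J,\cO}\otimes k$ is the one-dimensional module on which $x$ acts by $\varphi^k_{(\cdot)}(c_J)$, so it is $M_{L',k}$ for the unique $L'$ with $d_{J,L'}=1$. Hence the dimension is $d_{J,L}$. Consequently $f_L=\sum_{J}\sum_{i=1}^{d_{J,L}} e_{J}^{(i)}$ in $B_K$, with each $e_J^{(i)}$ conjugate to $e_J$.

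\textbf{Step 2: computing ranks.} Since $f_{L'}Bf_L$ is an $\cO$-direct summand of $B$ (Peirce decomposition), it is $\cO$-free, and its rank equals both $\dim_k \bar f_{L'}B_k\bar f_L=\wt C_{L'L}$ and $\dim_K f_{L'}B_Kf_L$. Expanding both idempotents as in Step 1 gives
\[\dim_K f_{L'}B_Kf_L=\sum_{J',J} d_{J',L'}\,\dim_K e_{J'}B_Ke_J\,d_{J,L}=(D^\top C D)_{L'L},\]
which is the claimed formula.

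\textbf{Main obstacle.} The delicate step is the identification in Step 1: that the reduction mod $p$ of $M_{J,\cO}$ is exactly the simple $M_{L,k}$ indexed by the unique $L$ with $d_{J,L}=1$. This uses the definition of $D$ together with Lemma~\ref{L: matrix M_F}: two one-dimensional modules of $B_k$ coincide precisely when all $\varphi_L$ agree mod $p$ on the corresponding Coxeter elements. Every $J\in\R$ reduces to some $M_{L,k}$ with $L\in\R_p$, since all simples of $B_k$ are of the form $M_{L,k}$. I would also check that the conventions for $C$ (row index the simple, column index the projective) match the transposition in $D^\top CD$; with the convention above, the formula comes out symmetric in the required way.
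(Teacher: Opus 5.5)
The paper gives no proof of this statement; it is quoted from Atkinson--Pfeiffer--van Willigenburg. Your argument is correct, and it is the standard Brauer-reciprocity style proof of that result: lift a complete set of primitive orthogonal idempotents of $\Des_{\mathbb{F}_p}(W)$ to $\Des_{\Z_p}(W)$, split each lift in $\Des_{\Q_p}(W)$ according to the reductions $M_{J,\Z_p}\otimes\mathbb{F}_p$, and compare ranks of the $\cO$-free Peirce components.

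There is one slip in your opening paragraph. The conditions $d_{J,K}=d_{J',K}=1$ do not force $J=J'$. Indeed, for any $J\in\R\setminus\R_p$ and the $K\in\R_p$ with $d_{J,K}=1$, you also have $d_{K,K}=1$. The reason is that the rank statement only makes the columns of $M^c(W)$ indexed by $\R_p$ independent modulo $p$, not all columns indexed by $\R$.

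The fact you actually use is that each row of $D$ contains exactly one $1$. This follows by running the same argument with two column indices $K,K'\in\R_p$ in a single row. Alternatively, it follows directly from the pairwise non-isomorphism of the $M_{K,\mathbb{F}_p}$, $K\in\R_p$, which is what you rely on in Step 1.
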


Since the simple modules for the descent algebra have $\Z$-forms of rank one and they are defined over arbitrary field, we remark that the equation $\wt{C}=D^\top CD$ in Theorem \ref{T: APW} applies when $C,\wt{C}$ are the Cartan matrices for the descent algebras of $W$ defined over arbitrary fields of characteristic $\infty$ and $p$ respectively. 

\medskip

In this paper, we use the Ext-quivers of the descent algebras to determine their representation type. For type $\A$ and $p=\infty$ case, the Ext-quiver has been obtained by Schocker \cite{Schocker:2004a}. Subsequently, Saliola determined the Ext-quivers for type $\A$ and $\B$ cases provided that $p\nmid |W|$. Otherwise, in general, determination of the Ext-quiver remains open. However, a recent work of Benson and the second author \cite{Benson/Lim:2025a} shows the following. In the corollary, $n_W$ is certain number defined in \cite[Definition 9.9]{Benson/Lim:2025a} which we shall not repeat here.

\begin{corollary}\label{C:BensonLimExtQuiver} Let $(W,S)$ be a Coxeter system and let $Q_\infty$ be the Ext-quiver of the descent algebra $\Des_{\mathbb{C}}(W)$. If $p\nmid n_W$, then the Ext-quiver of $\Des_F(W)$ is $Q_\infty$. 
\end{corollary}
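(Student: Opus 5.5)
This corollary comes from \cite{Benson/Lim:2025a}. The idea is to lift idempotents from a $p$-local integral form of the descent algebra, where $p\nmid n_W$, and then use Lemma \ref{L:Extmodp} together with a dimension count.

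First fix a $p$-modular system $(K,\cO,k)$ with $k\subseteq F$ and $K$ of characteristic zero, and set $B=\Des_\cO(W)$. This is $\cO$-free, and $B\otimes_\cO K=\Des_K(W)$, $B\otimes_\cO k=\Des_k(W)$. The Ext-quiver of $\Des_{\mathbb{C}}(W)$ agrees with that over $K$, since the descent algebra is split basic over $\Q$. The key input from \cite{Benson/Lim:2025a} is the construction of primitive orthogonal idempotents $e_K$, $K\in\R$, of $\Des_\Q(W)$ whose denominators divide $n_W$. When $p\nmid n_W$ these lie in $B$, so they reduce to orthogonal idempotents of $\Des_k(W)$. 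Since the denominators avoid $p$, I would expect $n_W$ to be divisible by every prime dividing some $[\N_W(W_J):W_J]$. Then $\R_p=\R$, the decomposition matrix $D$ is the identity, and Theorem \ref{T: APW} gives $\wt C=C$. In particular the simple modules over $K$ and over $k$ correspond bijectively via the $\Z$-forms $M_{J,\Z}$.

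Next, the arrows. The Ext-space $\Ext^1(S_i,S_j)$ is $e_j\Rad(A)e_i/e_j\Rad^2(A)e_i$. Since the $e_i$ lie in $B$, this can be computed on the lattice $e_jBe_i$. Solomon's description and Theorem \ref{T: Sol Radical} show that $\Rad(B):=\Ker\theta\otimes\cO$ reduces onto $\Rad(\Des_k(W))$, because $\R_p=\R$ means no extra $x_L$ terms enter the radical. Lemma \ref{L:Extmodp} then gives one inequality: every arrow of $Q_\infty$ survives mod $p$, so $\dim\Ext^1_k\ge\dim\Ext^1_K$. For the reverse inequality I would show that $e_j\Rad^2(B)e_i$ is a pure sublattice of $e_j\Rad(B)e_i$ with rank equal to that of its image over $k$. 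Equivalently, the quotient $e_j(\Rad B/\Rad^2B)e_i$ is $\cO$-free. Then the dimensions over $k$ and $K$ coincide.

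I expect this purity to be the main obstacle, since torsion in $\Rad B/\Rad^2 B$ would create extra arrows mod $p$. To handle it I would use the explicit Benson--Lim basis: elements of $e_j\Rad(B)e_i$ forming an $\cO$-basis adapted to the radical filtration, with transition coefficients whose denominators divide $n_W$. These coefficients are therefore units in $\cO$, which gives freeness of every layer and hence $Q_p=Q_\infty$.

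Finally, extend scalars from $k$ to $F$; Ext-dimensions are unchanged under field extension.
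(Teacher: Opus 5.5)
Your reduction is sound as far as it goes. It unpacks what the paper only cites: the paper's proof says that \cite[Theorem 9.15]{Benson/Lim:2025a} places $\Des(W)$ under the hypotheses of \cite[Theorem 2.3]{Benson/Lim:2025a} when $p\nmid n_W$, and part (iii) of that theorem gives the equality of Ext-quivers. Take $R=\Ker(\theta)\otimes\cO$, which is pure in $B$, assume $\R_p=\R$, and assume the idempotents lie in $B$. Then you correctly obtain $e_j\Rad(B_k)e_i/e_j\Rad^2(B_k)e_i\cong (e_jRe_i/e_jR^2e_i)\otimes_\cO k$. So the two Ext-dimensions agree if and only if $e_j(R/R^2)e_i$ has no $p$-torsion. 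This isomorphism already gives $\dim\Ext^1_k\ge\dim\Ext^1_K$ directly; Lemma \ref{L:Extmodp}, as stated, only gives non-vanishing, not this inequality.

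The gap is in the one step that carries all the content: torsion-freeness of $R/R^2$.
\begin{itemize}
\item Coefficients whose denominators divide $n_W$ lie in $\cO$. That does not make them units, and your inference ``therefore units'' is invalid.
\item More fundamentally, an integral basis of $e_jRe_i$ says nothing about whether $R^2$ is saturated in $R$. Here $R^2$ is the $\cO$-span of products of radical elements, and saturation is what you need.
\item Saturation is an elementary-divisor question. If $R^2$ is the row span of a matrix of rank $s$, written in a basis of $R$, then $R/R^2$ has no $p$-torsion if and only if $p$ does not divide the gcd of the $s\times s$ minors.
\item This is exactly the form of the numbers $d_{W,i}$ that enter $n_W$: they are gcds of maximal minors of the matrices $J(i)$. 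It is the only place where the hypothesis $p\nmid n_W$ can be used.
\end{itemize}

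Note that your first two steps need only $p\nmid[\N_W(W_J):W_J]$ for all $J$. Under that hypothesis $\R_p=\R$, and idempotents lift over a complete $\cO$. So the extra primes that $n_W$ may contain through the $d_{W,i}$ are needed precisely for the purity you left unproved.

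To close the gap, show that $p\nmid d_{W,i}$ makes the radical powers of the integral form saturated at $p$; this is what \cite[Theorem 9.15]{Benson/Lim:2025a} supplies. Then conclude as in \cite[Theorem 2.3(iii)]{Benson/Lim:2025a}, or simply cite the two results as the paper does.
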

\begin{proof} By \cite[Theorem 9.15]{Benson/Lim:2025a}, when $p\nmid n_W$, \cite[Theorem 2.3(iii)]{Benson/Lim:2025a} applies for the descent algebra. In particular, the Ext-quivers of $\Des_F(W)$ and $\Des_{\mathbb{C}}(W)$ are identical. 
\end{proof}

This result will later be used for Magma computation of the Ext-quiver of $\Des_F(W)$ for some specific Coxeter group $W$ but all primes $p$ satisfying $p\nmid n_W$. By which, we can just assume that $p=\infty$. 



\subsection{Type $\B$ descent algebra}\label{SS:typeB} 
We now focus on descent algebras of type $\B$. For each $n\geq 2$, we fix the following Coxeter graph.

\[\begin{tikzpicture}
\node at (0,0) {$\sb$};
\node at (1,0) {$\sb$};
\node at (2,0) {$\sb$};
\node at (3.5,0) {$\sb$};
\node at (0,-.5) {$s_0$};
\node at (1,-.5) {$s_1$};
\node at (2,-.5) {$s_2$};
\node at (3.5,-.5) {$s_{n-1}$};
\node at (0.5,.3) {\tiny{$4$}};
\draw[-] (0,0) to (2.5,0);
\draw[-] (3,0) to (3.5,0);
\draw[-,dotted] (3,0) to (2.5,0);
\end{tikzpicture}\] The Coxeter group $W=\B_n$ is the hyperoctahedral group and we identify it with the subgroup $C_2\wr \sym{n}$ of the symmetric group $\sym{[-n,-1]\cup [1,n]}$ where $s_0=(-1,1)$ and $s_i=(i,i+1)(-i,-(i+1))$. We further identify the set of generators $\{s_0,\ldots,s_{n-1}\}$ with $[0,n-1]$ by specifying their indices.

We want to discuss a suitable labelling for the standard parabolic subgroups using compositions. For this we follow the account in    \cite{Atkinson/Pfeiffer/vanWilligenburg:2002a}.  For  $K\subseteq S$, the Coxeter element $c_K$ maybe used to label  the parabolic subgroup $W_K$ as follows. 

Since we have identified elements in $\B_n$ as permutations, we can write $c_K=x_0x_1$ where $x_0$ is the cycle containing both positive and negative elements, or $x_0=1$ if there is no such cycle, and
$x_1$ is the product of all the other cycles where positive and negative
cycles come in matching pairs. Notice that $x_0$ and $x_1$ commute. Each positive cycle is on some range $\{u,u+1,\ldots,v\}$ of consecutive integers, and the list of lengths of positive cycles taken in the natural order determines
and is determined by $K$. In this way, the subset $K$ of $S$ can be parametrised by compositions of integers $m$, with $0\leq m\leq n$.

Let $m_0 := n-m$. Then the  parabolic subgroup 
$W_K$ is conjugate to $(C_2\wr \sym{m_0}) \times \sym{q}\simeq \B_{m_0}\times \sym{q}$, where $\sym{q}$ is a direct product of symmetric groups (a Young subgroup), with factors corresponding to the parts of the composition  $q$ associated to $\sym{m}$. Notice that we only record the length, and we do not
need to know on which subsets of $[0, n-1]$ the group $\sym{q}$ acts. We write $q_K$ for $q$ the composition associated to $K$. The following from \cite{Atkinson/Pfeiffer/vanWilligenburg:2002a} is attributed to \cite{Bergeron/Bergeron/Howlett/Taylor:1992}.

\begin{lemma}\label{L:typeBcompo}  Let $K,L$ be subsets of $S$ and $q_K,q_L$ be the compositions associated to $K,L$ respectively.
\begin{enumerate}[(i)]
  \item Then $K$ is conjugate to $L$ if and only if $q_K$ and $q_L$ define the same partition.
  \item Suppose that $q_K$ has $a_i$ parts
	of size $i$ and $t$ is the length of $q_K$. We have
	\[[N(W_K):W_K]  =  2^t \prod_{i=1}^\infty a_i!.\]
\end{enumerate}	
\end{lemma}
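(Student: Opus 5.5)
The plan is to work concretely with $W=\B_n$ as the group of signed permutations of $\pm[1,n]=[-n,-1]\cup[1,n]$, and to read everything off from the orbits of $W_K$ on this set. By the description preceding the lemma, $W_K$ is conjugate to $B_{m_0}\times\sym{q}$. Here $B_{m_0}$ is the full signed permutation group on $\pm[1,m_0]$, and $\sym{q}=\sym{q_1}\times\cdots\times\sym{q_t}$ acts diagonally on consecutive blocks $O_1,\ldots,O_t$ of positive integers and on their negatives $-O_1,\ldots,-O_t$. So the orbits of $W_K$ on $\pm[1,n]$ are one self-paired orbit $\pm[1,m_0]$ of size $2m_0$ (absent if $m_0=0$), together with $t$ pairs $O_j,-O_j$ of size $q_j$ each. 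Since both statements are invariant under conjugation, I will assume $W_K$ has exactly this form.

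For (i), suppose $K$ and $L$ are conjugate. Then $W_K$ and $W_L$ are conjugate subgroups, so they have the same multiset of orbit sizes on $\pm[1,n]$ and the same self-paired orbit. Hence $m_0$ is the same for both, and $q_K,q_L$ have the same multiset of parts, i.e.\ define the same partition. Conversely, suppose $q_K$ and $q_L$ rearrange the same partition. Choose an ordinary permutation $w\in\sym{n}\subseteq\B_n$ that fixes $[1,m_0]$ pointwise and carries the blocks of $q_K$ onto blocks of the same sizes for $q_L$, preserving the order within each block. Extend it by $w(-i)=-w(i)$. Then $wW_Kw^{-1}=W_L$. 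To pass from conjugacy of the parabolic subgroups to conjugacy of the subsets $K,L$, I will invoke the standard fact that for standard parabolic subgroups, $W_K$ and $W_L$ are conjugate if and only if $K$ and $L$ are $W$-conjugate (Geck--Pfeiffer \cite{Geck/Pfeiffer:2000}). This is the one step relying on outside Coxeter theory rather than direct computation.

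For (ii), I will compute $N=\N_W(W_K)$ directly. Any $n\in N$ permutes the $W_K$-orbits and commutes with $x\mapsto -x$. It therefore fixes the self-paired orbit $\pm[1,m_0]$ and permutes the $t$ unordered pairs $\{O_j,-O_j\}$, sending each pair to a pair of the same size. Conversely, the following elements all normalize $W_K$:
\begin{itemize}
\item any element of $B_{m_0}$;
\item for each block, the element $\epsilon_j$ that negates $O_j\cup -O_j$ and fixes everything else; it commutes with the diagonal $\sym{q_j}$;
\item for blocks of equal size, order-preserving permutations that interchange whole blocks.
\end{itemize}
The remaining step is to show these generate all of $N$, i.e.
\[N=B_{m_0}\times\prod_{i}\bigl((\sym{i}\times C_2)\wr\sym{a_i}\bigr).\]
For this I reduce to a single block: a signed permutation of $O\cup -O$ preserving the partition $\{O,-O\}$ either maps $O$ to $O$ or swaps $O$ with $-O$. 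Such elements form $\sym{q_j}\times\langle\epsilon_j\rangle$, whose image on $O$ is just a permutation. Hence, within the stabilizer of the block, the normalizer of the diagonal $\sym{q_j}$ is $\sym{q_j}\times C_2$. This also covers $q_j=1$, where $\epsilon_j=(j,-j)$ gives the factor $2$, and $q_j=2$, where one checks the index is $2$.

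Dividing by $|W_K|=|B_{m_0}|\prod_j q_j!$ then gives
\[[N(W_K):W_K]=2^t\prod_i a_i!.\]
The main obstacle is this last containment: proving that every element of $N$ that fixes each pair $\{O_j,-O_j\}$ lies in $W_K\prod_j\langle\epsilon_j\rangle$. I handle it block by block via the description of signed permutations preserving $\{O,-O\}$ above.
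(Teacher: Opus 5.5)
Your proof is correct, but it does not follow the paper, because the paper gives no proof of this lemma. It imports the statement from \cite{Atkinson/Pfeiffer/vanWilligenburg:2002a}, who attribute it to \cite{Bergeron/Bergeron/Howlett/Taylor:1992}. You instead verify it directly in the signed-permutation model, and your argument can be shortened at two points.

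\emph{Part (ii).} The decisive observation is that $W_K=\B_{m_0}\times\sym{q}$ is exactly the set of elements of $W$ that stabilise each of its orbits $\pm[1,m_0]$, $O_j$, $-O_j$ setwise. Hence $\N_W(W_K)$ is precisely the stabiliser in $W$ of the orbit partition. Any element of it that maps every $O_j$ to itself already lies in $W_K$. The index $2^t\prod_i a_i!$ then simply counts the flips $\epsilon_j$ together with the permutations of equal-size blocks, so your ``main obstacle'' is immediate.

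\emph{Part (i).} The appeal to Geck--Pfeiffer is unnecessary. Your $w$ fixes $\pm[1,m_0]$ pointwise and is a translation on each block. So $ws_iw^{-1}=s_i$ for $0\le i<m_0$, and $ws_iw^{-1}=s_{w(i)}$ whenever $i,i+1$ lie in one block of $q_K$. This gives $wKw^{-1}=L$ outright.

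\emph{Comparison.} The citation buys the paper brevity. The same source also supplies the type $\D$ analogue, Lemma~\ref{L: typeD Normaliser}, where an orbit count like yours would have to be redone inside the index-two subgroup $\D_n\le\B_n$. Your route buys a self-contained, elementary argument that makes visible where the factors $2^t$ and $\prod_i a_i!$ come from.
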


Two subsets $K$ and $L$ of $S$ are conjugate in $\B_n$ if and only if $W_K$ and $W_L$ are conjugate in $W$, that is, the corresponding compositions give the same partition. With this, we can now specify the choices of $\R$ and $\R_p$ for $\B_n$. First we take for $\R$ the set consisting of subsets $J\subseteq S$ such that $q_J$ is a partition (of size $\leq n$). Moreover, using Lemma \ref{L:typeBcompo}(ii), $\R_p$ consists of the subsets $J\in\R$ such that $q_J$ is $p$-regular (that is, does not have $p$ or more equal parts). We write $\P(\leq n)$ (respectively, $\P_p(\leq n)$) for the set of all partitions (respectively, $p$-regular partitions) of sizes $\leq n$. This gives us the following.

\begin{proposition}\label{P:BSimple} The set $\P_p(\leq n)$ parametrises the complete set of non-isomorphic simple $\Des_F(\B_n)$-modules.
\end{proposition}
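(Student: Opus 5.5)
The plan is to combine the general parametrisation of simple modules (Lemma \ref{L: matrix M_F} together with the discussion preceding it) with the type $\B$ combinatorics of Lemma \ref{L:typeBcompo}.

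First I will recall the general statement. The simple $\Des_F(W)$-modules are the one-dimensional modules $M_{K,F}$ for $K\in\R_p$. They are pairwise non-isomorphic and exhaust all simples, since their number $|\R_p|$ equals the rank of $M^c(W)$ modulo $p$. To justify this, note that $\Des_F(W)/\Rad(\Des_F(W))\cong\theta_F(\Des_F(W))=\overline{\ccl(W)}$ by Theorem \ref{T: Sol Radical}. The latter algebra is commutative semisimple and spanned by the $\varphi_J^F$. Its dimension is therefore the rank of the matrix $(\varphi^F_J(c_K))$, which is $M^c(W)$ reduced mod $p$.

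Lower triangularity of $M^c(W)$ with diagonal entries $\beta_{JJ}=[\N_W(W_J):W_J]$ (Lemma \ref{L:beta}) then gives the following. The columns indexed by $K\in\R_p$ are linearly independent mod $p$, hence pairwise distinct, so the characters $\lambda_K$ with $K\in\R_p$ are pairwise distinct. Their number equals the rank, which is the number of simple modules. So $\{M_{K,F}:K\in\R_p\}$ is a complete irredundant set.

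It remains to identify $\R_p$ for $W=\B_n$. By Lemma \ref{L:typeBcompo}(i), conjugacy classes of subsets $K\subseteq S$ correspond bijectively to partitions $q_K$ of the integers $m\in[0,n]$. Here every composition of $m\le n$ actually arises from some $K$, namely by choosing the positive cycles on consecutive ranges inside $[n-m+1,n]$. So $\R$ is in bijection with $\P(\leq n)$.

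By Lemma \ref{L:typeBcompo}(ii), $[N(W_K):W_K]=2^t\prod_i a_i!$.
\begin{itemize}
\item If $p=\infty$, this index is never divisible by $p$.
\item If $p$ is odd, then $p$ divides the index if and only if $p\mid a_i!$ for some $i$, that is, if and only if some $a_i\ge p$. This is exactly the failure of $p$-regularity.
\end{itemize}
Hence $\R_p$ corresponds to $\P_p(\leq n)$, and the proposition follows.

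The main obstacle I expect is the case $p=2$. Then $2^t$ is even whenever $t\ge1$, so only the empty partition would survive, whereas $\P_2(\le n)$ consists of partitions with distinct parts. I would handle this by rechecking the precise form of the index formula in \cite{Atkinson/Pfeiffer/vanWilligenburg:2002a}, since the factor $2^t$ may need adjusting. Alternatively, I would reinterpret the statement via the decomposition-matrix equivalence $\varphi^F_L(c_J)=\varphi^F_L(c_K)$ for all $L$, and count the distinct columns mod $2$ directly. I would verify the claimed count for small $n$ before committing to the general argument.
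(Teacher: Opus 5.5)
For $p$ odd or $p=\infty$, your argument is the paper's. The paper takes $\R$ to be the subsets $J$ with $q_J$ a partition, which identifies $\R$ with $\P(\leq n)$ by Lemma~\ref{L:typeBcompo}(i). It then reads off $\R_p$ from the index $2^t\prod_i a_i!$ in Lemma~\ref{L:typeBcompo}(ii), relying on the general fact from \cite{Atkinson/Pfeiffer/vanWilligenburg:2002a} that $\{M_{K,F}:K\in\R_p\}$ is a complete irredundant set of simple modules. If you re-derive that general fact as you do, note that lower triangularity only shows the rank modulo $p$ is at least $|\R_p|$. The reverse inequality needs $\beta_{JJ}\mid\beta_{JK}$ from Lemma~\ref{L:beta}, which makes every row $J$ with $p\mid\beta_{JJ}$ vanish modulo $p$.

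The obstacle you flag at $p=2$ is genuine, but your proposed remedy of adjusting the factor $2^t$ points the wrong way, because that factor is correct. For instance, $K=[0,n-2]$ gives $W_K=\B_{n-1}$ with normaliser $\B_{n-1}\times\B_1$, so the index is $2=2^1\cdot 1!$. When $p=2$, every $J\neq S$ has $t\geq 1$, so $\beta_{JJ}$ is even. By $\beta_{JJ}\mid\beta_{JK}$, the entire row $J$ of $M^c(\B_n)$ is then zero modulo $2$, whereas $\beta_{SK}=1$ for all $K$. Hence all the characters $\lambda_K$ coincide modulo $2$, so your column-counting alternative gives the same answer. Thus $\Des_F(\B_n)$ has exactly one simple module, $M_\varnothing$.

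This is exactly what the paper itself uses in Lemmas~\ref{L:p2ExtQ} and~\ref{L:Q22}. For $n=2$ it gives one simple module, not the three indexed by $\varnothing,(1),(2)$. So, with ``$p$-regular'' meaning fewer than $p$ equal parts, the proposition is false for $p=2$. The paper's one-line derivation from Lemma~\ref{L:typeBcompo}(ii) overlooks this, as you suspected. The correct conclusion of your proof is that the proposition holds for $p\neq 2$, and for $p=2$ the parametrising set is $\{\varnothing\}$. Compare the explicit treatment of $p=2$ for $\Gamma_p(n)$ in type $\D$.
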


We give an example illustrating what we have discussed above. 

\begin{example} Consider $W=\B_2 = \langle s_0, s_1\rangle$. We list  the subsets $J$ of $S=\{s_0,s_1\}$ up to conjugation and their associated partitions.
\[\begin{array}{|c|c|c|}
\hline
J&q_J&[\N_W(W_J):W_J]\\ \hline
\emptyset  & (1^3) & |W|=8\\ \hline
\{s_1\} & (2,1) & 4\\ \hline
\{ s_1, s_2\} & (3) & 2 \\ \hline
\{ s_0\} & (1^2) & 2 \\ \hline
\{ s_0, s_2\} & (2) & 2 \\ \hline
\{ s_0, s_1\} & (1) & 2\\ \hline
S & \varnothing & 1 \\ \hline
\end{array}
\]
In the table of marks, taking this order, the diagonal entries are the $[N_W(W_J):W_J]$. The entries in the first column are the fixed points for the Coxeter element
for $J=\emptyset$, that is, the identity element. So they are the sizes of the coset spaces. The first row in the table of marks is $|W|, 0, 0, \ldots, 0$ (the only element which has fixed points on $W$ is the identity). The last column consists of zeros, except a $1$ in the last row. The last row has all entries equal to $1$. We see that, for $p=2$, all rows except the last one are zero modulo $2$. 
For $p=3$,  only the first row is zero.
\end{example}

In the classification of the representation type of the descent algebra of type $\A$, in \cite{Erdmann/Lim:2025}, the authors made use of surjective algebra homomorphisms connecting such algebras of different degrees (see \cite{Bergeron/Garsia/Reutenauer:1992a}). In the type $\B$ case, we will apply the following theorem by Aguiar--Bergeron--Nyman. In the theorem and its proof, $J+i$ is the set $\{j+i:j\in J\}$. 

\begin{theorem}[{\cite[Proposition 5.2]{Aguiar/Bergeron/Nyman:2004}}]\label{T:surjB} The map $\beta:\Des_F(\B_{n+1})\to \Des_F(\B_n)$ defined by, for each $K\subseteq [0,n]$,  \[\beta(x_K)=\left \{\begin{array}{ll} x_{(K\backslash \{0\})-1}&\text{if $0\in K$,}\\ 0&\text{if $0\not \in K$,}\end{array}\right .\] is a surjective algebra homomorphism.
\end{theorem}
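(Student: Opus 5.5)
The plan is to verify multiplicativity directly on the basis $\Omega$, using Solomon's structure constants from \eqref{Eq:DesM}. Surjectivity and linearity are immediate. The map $\beta$ is defined on the $F$-basis $\{x_K : K\subseteq[0,n]\}$ and extended linearly. Every basis element $x_{K'}$ of $\Des_F(\B_n)$ with $K'\subseteq[0,n-1]$ satisfies $x_{K'}=\beta(x_{(K'+1)\cup\{0\}})$. Also $\beta(x_{[0,n]})=x_{[0,n-1]}$ is the identity, since $X_S=\{1\}$. So it remains to show $\beta(x_Jx_K)=\beta(x_J)\beta(x_K)$ for all $J,K\subseteq[0,n]$. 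Because the constants $a_{JKL}$ are integers, it suffices to prove the corresponding identity of structure constants over $\Z$.

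First I treat the case $0\notin J$ or $0\notin K$, where the right-hand side is $0$. I must show that $a_{JKL}=0$ whenever $0\in L$. Suppose $L=w^{-1}Jw\cap K$ contains $s_0$. Then $0\in K$, and $ws_0w^{-1}\in W_J$. Now $ws_0w^{-1}$ is a reflection conjugate to $s_0$, that is, a single sign change $(-i,i)$. The parabolic subgroup $W_J$ is a product $\B_a\times\sym{q}$ acting on disjoint blocks, and it contains a sign change only if its $\B$-factor is nontrivial, that is, only if $0\in J$. Hence if $0$ is missing from $J$ or from $K$, no $L$ containing $0$ occurs, and both sides vanish.

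The main case is $0\in J\cap K$. With $J'=(J\setminus\{0\})-1$ and $K'=(K\setminus\{0\})-1$, I need
\[\sum_{L\ni 0} a_{JKL}\,x_{(L\setminus\{0\})-1}=\sum_{L'} a_{J'K'L'}\,x_{L'} .\]
So I need a bijection between two sets. The first is the set of $w\in X_{JK}\subseteq\B_{n+1}$ with $0\in w^{-1}Jw\cap K$. The second is $X_{J'K'}\subseteq\B_n$. The bijection must send $L=w^{-1}Jw\cap K$ to $L'=(L\setminus\{0\})-1$. I will describe double cosets $W_J\backslash \B_{n+1}/W_K$ combinatorially, following Bergeron--Bergeron's treatment of the Mackey formula in type $\B$. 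Each double coset is recorded by a signed contingency matrix. Its rows are indexed by the blocks of $J$: the $\B$-block $[1,a]$ (together with its negatives) and then the $\sym{}$-blocks. Its columns are indexed similarly by the blocks of $K$. An entry counts how many letters (up to sign) of a block of $K$ are sent into a block of $J$. The intersection $w^{-1}W_Jw\cap W_K$ for the minimal representative is the standard parabolic subgroup read off from the nonzero entries in the column-reading order. In this picture, the condition $0\in L$ says exactly that the $(\B,\B)$ entry is at least $1$. Decreasing that entry by one corresponds to passing from $(J,K)$ to $(J',K')$, since both $\B$-blocks shrink by one. This gives the bijection, and the new intersection is visibly $L$ with its $\B$-block shrunk by one, that is, $(L\setminus\{0\})-1$.

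I expect the main obstacle to be this last step. One has to check carefully that the minimal length double coset representative and the resulting standard parabolic $L$ behave compatibly under deleting one letter from the $\B$-blocks. The subtle point is the index shift: the positions of the $\sym{}$-blocks of $L$ inside $[0,n]$ must move down by exactly one. Handling the signed entries (letters sent from a $\sym{}$-block into negatives) also needs care, because they must not interact with the removed letter. I would first check the bijection on $\B_2\to\B_1$ and $\B_3\to\B_2$ by hand before writing the general argument.
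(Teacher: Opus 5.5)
Your proposal is correct, but it takes a different route from the paper. The paper never checks multiplicativity itself. It quotes Aguiar--Bergeron--Nyman, who show over $\Z$ that $B_J\mapsto B_{J-1}$ (for $0\notin J$) and $B_J\mapsto 0$ (for $0\in J$) is a surjective ring homomorphism, where $B_J=x_{J^c}$; their proof uses the Bergeron--Bergeron multiplication rule for type $\B$. The paper then only translates via $K=J^c$, for which $([1,n]\setminus J)-1=(K\setminus\{0\})-1$, and reduces modulo $p$.

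You instead reprove the cited result directly in the basis $\{x_K\}$ from Solomon's formula, and you in fact obtain the sharper identities of structure constants:
\begin{itemize}
\item $a_{JKL}=0$ whenever $0\in L$ but $0\notin J\cap K$, because $s_0$ is not conjugate to any $s_i$ with $i\geq 1$;
\item $a_{JKL}=a_{J'K'L'}$ whenever $0\in J\cap K$ and $0\in L$.
\end{itemize}
Your approach is self-contained and needs no complement dictionary, at the cost of double-coset bookkeeping. The paper's approach is a short reduction to the literature.

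The step you flagged as the main obstacle does go through, with these details.
\begin{itemize}
\item Since $w\in X_K$, you have $0<w(1)<\cdots<w(b)$ on the $\B$-block of $K$. So the letters of that block landing in $\pm[1,a]$ are exactly $1,\dots,m_{00}$.
\item Then $w^{-1}\in X_J$ forces $w(1)=1$. This matches $s_0\in w^{-1}Jw\cap K$ holding precisely when $m_{00}\geq 1$.
\item The remaining letters of that block go positively, and in order, into $A_1,A_2,\dots$.
\item Inside each symmetric-group block of $K$, the letters in increasing order go to $-A_r,\dots,-A_1,\pm[1,a],A_1,\dots,A_r$.
\end{itemize}
None of this depends on $m_{00}$. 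So lowering $m_{00}$ by one removes one letter from the $\B$-block of $L$ and shifts every other block of $L$ down by one position. That is exactly $L'=(L\setminus\{0\})-1$.

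One wording fix: the entries between two symmetric-group blocks must record positive and negative images separately. Only the entries in the $\B$-row and the $\B$-column are single unsigned numbers.
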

\begin{proof} Let $B_J=x_{J^\c}$. In \cite{Aguiar/Bergeron/Nyman:2004}, it is shown that $\beta:\Des_\Z(\B_{n+1})\to \Des_\Z(\B_n)$ defined by \[\beta(B_J)=\left \{\begin{array}{ll} B_{J-1}&\text{if $0\not\in J$,}\\ 0&\text{if $0\in J$,}\end{array}\right .\] is a surjective algebra homomorphism over $\Z$ using the multiplication rule for the descent algebra of type $\B$ (see \cite{Bergeron/Bergeron:1992}). The statement is obtained by translating their result and then followed by taking modulo $p$.
\end{proof}

With this, we obtain the following corollary. 

\begin{corollary}\label{C:wildrep} If $\Des_F(\B_n)$ is infinite type (respectively, wild), then $\Des_F(\B_{n+1})$ is infinite type (respectively, wild).
\end{corollary}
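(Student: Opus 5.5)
This is a direct combination of Theorem \ref{T:surjB} with Lemma \ref{L: surj alg}. We take $A=\Des_F(\B_{n+1})$ and $B=\Des_F(\B_n)$. Theorem \ref{T:surjB} provides the map $\beta:A\to B$, and this map is a surjective algebra homomorphism over $F$.

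The one point to check is that Theorem \ref{T:surjB} really holds over an arbitrary field $F$ of characteristic $p$, and not only over $\Z$.

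1. Over $\Z$, the map $\beta_\Z:\Des_\Z(\B_{n+1})\to\Des_\Z(\B_n)$ is a surjective ring homomorphism. It sends basis elements to basis elements or to $0$.
2. By definition $\Des_F(W)=F\otimes_\Z\Des_\Z(W)$, so we may form $\mathrm{id}_F\otimes\beta_\Z$. This is again an algebra homomorphism.
3. It is surjective: each basis element $x_J$ of $\Des_F(\B_n)$ equals $\beta(x_{(J+1)\cup\{0\}})$.

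Now apply Lemma \ref{L: surj alg} to $\beta:A\to B$. If $B$ has infinite (respectively, wild) representation type, then $A$ has infinite (respectively, wild) representation type. This is exactly the statement of the corollary.

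There is no real obstacle. The only care needed is this base-change remark, together with the observation that the explicit preimages show surjectivity survives reduction modulo $p$.
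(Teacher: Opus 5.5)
Your proposal is correct and follows the paper's proof, which is exactly Theorem~\ref{T:surjB} combined with Lemma~\ref{L: surj alg}. Your base-change and explicit-preimage check (via $x_{(J+1)\cup\{0\}}$) is harmless but redundant, since the paper already carries out the reduction modulo $p$ in the proof of Theorem~\ref{T:surjB}, which is stated over $F$.
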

\begin{proof} Use Theorem \ref{T:surjB} and Lemma \ref{L: surj alg}.
\end{proof}

\subsection{Type $\D$ descent algebra} In this subsection, we focus on the descent algebra of type $\D$. For each $n\geq 4$, we have the following Coxeter graph of type $\D_n$. 
\[\begin{tikzpicture}
\node at (-.866,.5) {$\sb$};
\node at (-.866,-.5) {$\sb$};
\node at (0,0) {$\sb$};
\node at (1,0) {$\sb$};
\node at (2.5,0) {$\sb$};
\node at (0,-.5) {$\t_2$};
\node at (1,-.5) {$\t_3$};
\node at (-1,1) {$\t_1$};
\node at (-1,-1) {$\u$};
\node at (2.5,-.5) {$\t_{n-1}$};
\draw[-] (0,0) to (-.866,.5);
\draw[-] (0,0) to (-.866,-.5);
\draw[-] (0,0) to (1.5,0);
\draw[-] (2,0) to (2.5,0);
\draw[-,dotted] (2,0) to (1.5,0);
\end{tikzpicture}\]  \ Let $T=\{\t_i:i\in [0,n-1]\}$ the set of generators of $\D_n$. It is natural to take $\D_n$ as a subgroup
of $\B_n=\langle s_0,s_1,\ldots,s_{n-1}\rangle$ (as in the previous subsection)  generated by $\{\u, s_1, \ldots, s_{n-1}\}$ where $\u= s_0s_1s_0 =(-2,1)(-1,2)$ and $\t_i=s_i=(i,i+1)(-i,-(i+1))$. We  identify again the generator $\t_i$ by its index $i$ for $i\in [0,n-1]$. 

We summarise the description of parabolic subgroups following \cite{Atkinson/Pfeiffer/vanWilligenburg:2002a}. Let $K\subseteq T$. Then the parabolic subgroup $W_K$ is conjugate to $W_0\times W_1$ where $W_0\simeq \D_{n_0}$ (which may be  identity). By convention, we take $\D_0=1$, $\D_1\simeq \A_1$, $\D_2\simeq \A_1\times \A_1$ and $\D_3\simeq \A_3$. We consider two cases when $W_0\neq 1$ or $W_0=1$ below. 
\begin{enumerate}[(i)]
  \item Assume $W_0\neq 1$. Then  $W_0\cong  \D_{n_0}$ for some $n_0\geq 1$  and $W_1$ is a subgroup  of  $\langle  \t_{n_0+2}, \ldots, \t_{n-1} \rangle \cong \sym{m}$ for $m=n-n_0-1$. The smallest cases are $n_0=1,2,3$. When $n_0=1$, $W_0\simeq \D_1$ is either generated by $\u$ or $\t_1$. When $n_0=2$, we have $W_0= \langle \u, \t_1\rangle \simeq \D_2$. When  $n_0=3$, we have that  $W_0 = \langle \u, \t_1, \t_2\rangle \simeq \D_3$. Hence $W_K$ is determined by $W_1$ which is a standard parabolic subgroup of $\sym{m}$ and we can label the group $W_K$ by a composition of $m$.
  \item Assume $W_0=1$. Then $W_1$ is a subgroup of $\D_n$ either generated by $T'=\{ \t_1, \t_2, \ldots, \t_{n-1}\}$ or $T''=\{\u, \t_2, \ldots, \t_{n-1}\}$, which both are isomorphic to $\sym{n}$. In this case, we can label $W_K$ by a composition  $\mu$ of $n$.  If $\mu_1>1$, then $K$ contains either $\t_1$ or $\u$. To distinguish these two cases, we write  $(\mu, +)$  when $K$ contains $\t_1$, and we write $(\mu, -)$ when  $K$  contains $\u$.
\end{enumerate}  In any of the cases above, we write $\mu_K$ for the composition associated to the subset $K$ of $T$.


\begin{lemma}[{\cite[Lemma 10]{Atkinson/Pfeiffer/vanWilligenburg:2002a}}]\label{L: typeD Normaliser} Let $K\subseteq T$ and $\mu_K$ be the composition associated to $K$. Suppose that $\mu_K$ has $m_i$ parts of size $i$. Then 
\[[\N_{W}(W_K):W_K]=a\prod_{i=1}^\infty m_i!2^{m_i}\] where $a=\frac{1}{2}$ if $|\mu_K|=n$ and $\mu_K$ has an odd part; otherwise, $a=1$.
\end{lemma}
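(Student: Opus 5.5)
We compute $[\N_W(W_K):W_K]$ for $W=\D_n$ by comparing with the type $\B$ formula in Lemma \ref{L:typeBcompo}(ii). Since $\D_n$ has index $2$ in $\B_n=C_2\wr\sym{n}$, we identify $W_K$ with a reflection subgroup of $\B_n$. We then describe $\N_{\B_n}(W_K)$ explicitly and intersect it with $\D_n$, the kernel of the sign-product homomorphism $\epsilon:\B_n\to\{\pm1\}$ that records the parity of the number of sign changes.

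First, up to conjugation we have $W_K=W_0\times W_1$, where $W_0\simeq\D_{n_0}$ acts on $\{\pm1,\ldots,\pm n_0\}$ (with $n_0\geq 2$, or the degenerate cases handled separately) and $W_1=\sym{\mu}$ is a Young subgroup permuting the remaining blocks. Here the $i$-block moves simultaneously with the $(-i)$-block. The normaliser in $\B_n$ preserves the orbit decomposition of $W_K$ on $\{\pm1,\ldots,\pm n\}$. Hence it is generated by the following:
\begin{itemize}
\item $\B_{n_0}$ acting on the first $n_0$ letters, which normalises $\D_{n_0}$ and contains it with index $2$;
\item for each block, the sign change of the whole block (the element $-1$ on that block), which centralises $\sym{}$ of the block;
\item permutations of equal-size blocks.
\end{itemize}
This gives the structure $\N_{\B_n}(W_K)=\B_{n_0}\times\prod_i (C_2\times\sym{i})\wr\sym{m_i}$. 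Comparing with $W_K$, we get $[\N_{\B_n}(W_K):W_K]=c\prod_i m_i!\,2^{m_i}$, where $c=2$ if $n_0\geq 1$ (coming from $[\B_{n_0}:\D_{n_0}]$) and $c=1$ if $n_0=0$. This is consistent with Lemma \ref{L:typeBcompo}(ii) once the $\B$ versus $\D$ factor is taken into account.

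Next, we intersect with $\D_n=\ker\epsilon$, so that $\N_{\D_n}(W_K)=\N_{\B_n}(W_K)\cap\D_n$. This intersection has index $1$ or $2$ in $\N_{\B_n}(W_K)$, according to whether $\N_{\B_n}(W_K)\subseteq\D_n$ or not. The latter always happens here, since $\B_{n_0}$ or a block sign change lies outside $\D_n$. Therefore $[\N_{\D_n}(W_K):W_K]=\tfrac12[\N_{\B_n}(W_K):W_K]$, where we are using $W_K\subseteq\D_n$. The cases now split as follows:
\begin{itemize}
\item If $n_0\geq1$, the factor $c=2$ cancels the $\tfrac12$, giving $a=1$. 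Here $|\mu_K|=m<n$, so this matches the statement.
\item If $n_0=0$, then $|\mu_K|=n$. The sign change of a block of size $i$ has $\epsilon=(-1)^i$. If some part is odd, that block sign change lies outside $\D_n$, and the index halves, giving $a=\tfrac12$. If all parts are even, every generator listed above lies in $\D_n$, since $\sym{}$ permutations of blocks act by pairs $(j,k)(-j,-k)$ and so have $\epsilon=1$. In that case nothing is lost and $a=1$.
\end{itemize}
This refines the count above: in the case $n_0=0$, the halving occurs precisely when an odd part exists.

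The main obstacle is justifying the claimed shape of $\N_{\B_n}(W_K)$, in particular that nothing beyond block permutations and sign changes normalises $W_K$. We would argue via the orbit structure. Any normalising element permutes the $W_K$-orbits $\{\pm\text{block}\}$ and preserves their sizes. Conversely, an element stabilising each orbit normalises $\sym{i}$ on that block, and $\N_{C_2\wr\sym{i}}(\sym{i})=C_2\times\sym{i}$ for $i\geq 3$. The small cases, namely $i\leq 2$, $n_0\leq3$ with the conventions $\D_1,\D_2,\D_3$, and the $(\mu,\pm)$ labels, we would check by hand; the last is consistent because $T'$ and $T''$ are conjugate in $\B_n$ but give index-$\tfrac12$ behaviour in $\D_n$. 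Alternatively, one could cite Howlett's general description $\N_W(W_K)=W_K\rtimes N_K$, but the direct count above suffices.
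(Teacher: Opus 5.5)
The paper gives no proof of this lemma; it quotes it from Atkinson--Pfeiffer--van Willigenburg. Your argument is therefore a self-contained replacement, and the route is sound.

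Since $\D_n=\ker\epsilon$ is normal of index $2$ in $\B_n$, the index $[\N_{\D_n}(W_K):W_K]$ is unchanged under $\B_n$-conjugation. So the $(\mu,-)$ groups may be conjugated by $s_0$ to standard Young subgroups. You then read $\N_{\B_n}(W_K)$ off the $W_K$-orbits, and $\N_{\D_n}(W_K)=\N_{\B_n}(W_K)\cap\ker\epsilon$ has index $1$ or $2$ in it.

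Compared with the bare citation, this explains the constant $a$. For $n_0\ge 2$, the factor $[\B_{n_0}:\D_{n_0}]=2$ cancels the halving. For $n_0=0$, halving happens exactly when some block sign change has odd length. It also explains why all-even classes split: there $\N_{\D_n}(W_K)=\N_{\B_n}(W_K)$, so the $\D_n$-class is half the $\B_n$-class.

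Two points need repair.

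First, your sentence ``the latter always happens here \ldots\ Therefore $[\N_{\D_n}(W_K):W_K]=\tfrac12[\N_{\B_n}(W_K):W_K]$'' is false and contradicts your next bullet. When $n_0=0$ and all parts of $\mu$ are even, $\N_{\B_n}(W_K)\subseteq\D_n$. The correct claim is that $\epsilon$ is nontrivial on $\N_{\B_n}(W_K)$ if and only if $n_0\ge 2$ or $\mu$ has an odd part. With that replacement, your case analysis gives exactly the stated formula.

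Second, the degenerate cases should be handled by reclassification rather than by hand.
\begin{itemize}
\item A genuine $\D_1$ is trivial, so a ``$\D_1$'' factor is just one more fixed letter. It belongs to the case $n_0=0$ with an extra part equal to $1$. Your description $\B_{n_0}\times\prod_i(C_2\times\sym{i})\wr\sym{m_i}$ and ``$c=2$ for $n_0\ge1$'' fail in general at $n_0=1$, because that letter can be swapped with other fixed letters.
\item Similarly, a ``$\D_1\simeq\A_1$'' generated by $t_0$ or $t_1$, in the paper's convention, is a type-$\A$ parabolic with $|\mu_K|=n$.
\end{itemize}
Otherwise no small-case checks are needed. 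Any $g\in\N_{\B_n}(W_K)$ permutes the $W_K$-orbits and commutes with $x\mapsto -x$. Hence, when $n_0\ge2$, it fixes the unique negation-stable orbit $\{\pm1,\ldots,\pm n_0\}$. In all cases it sends each block $B_j$ onto some $\pm B_{j'}$ with $|B_{j'}|=|B_j|$. Conversely, every such $g$ normalises $W_K$, because $\D_{n_0}\trianglelefteq\B_{n_0}$. This determines the normaliser for all block sizes, including $i\le2$ and $n_0\in\{2,3\}$.
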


We see that partitions of $n$ such that all parts are even play a special role, they are called all-even.  Two subsets $K$ and $L$  of $S$ are conjugate in $W$ if and only if the corresponding compositions determine the same partition, unless they are all-even. In  that case, $K$ and $L$ are conjugate  if only if $K\cap L$ contains exactly one of the two generators $\u,\t_1$.  These show that we can take $\R$  to be the set consisting of subsets $K\subseteq T$ such that the associated compositions $\mu_K$ belongs to
\[\Gamma(n) := \P(\leq n-2) \cup \{ \lambda \vdash n : \text{$ \lambda_j$ is odd for some $j$}\} \cup \{ (\lambda, \pm ) : \text{$\lambda \vdash n$ and $\lambda$ is all-even} \}.\]
Then $\R_p$ is the subset of  $\R$ consisting of $K$ such that $\mu_K$ satisfies $p\nmid a\prod_{i=1}^\infty m_i!2^{m_i}$. Let $\Gamma_p(n)$ be the corresponding subset of $\Gamma(n)$. When $p=2$ and $n$ is even, $\Gamma_p(n)=\{\varnothing\}$. When $p=2$ and $n$ is odd, $\Gamma_p(n)=\{\varnothing,(n)\}$. When $p\neq 2$, $\Gamma_p(n)$ consists of the $p$-regular partitions belonging in $\Gamma(n)$.

\begin{proposition} The set $\Gamma_p(n)$ parametrises the complete set of non-isomorphic simple $\Des_F(\D_n)$-modules.
\end{proposition}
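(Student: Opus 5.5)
The plan is to argue exactly as for Proposition \ref{P:BSimple}, using the general description of simple modules from Section \ref{S:Descent}. By Theorem \ref{T: Sol Radical} and Lemma \ref{L: matrix M_F}, the simple $\Des_F(W)$-modules are the one-dimensional modules $M_{K,F}$ with $K\in\R_p$. Here $\R$ is a set of representatives of the $W$-conjugacy classes of subsets of $T$, and $\R_p$ consists of those $K$ with $p\nmid[\N_W(W_K):W_K]$. These modules are pairwise non-isomorphic: the parabolic table of marks $M^c(W)$ is lower triangular with diagonal entries $\beta_{KK}=[\N_W(W_K):W_K]$. Restricting to the rows and columns indexed by $\R_p$ gives a triangular matrix with nonzero diagonal modulo $p$. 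Hence distinct $K,K'\in\R_p$ have distinct columns, so $\lambda_K\neq\lambda_{K'}$. Moreover $|\R_p|$ is the rank of $M^c(W)$ modulo $p$, which equals the number of simple modules (the dimension of the semisimple quotient $\overline{\ccl(W)}$). So it suffices to show that the map $K\mapsto\mu_K$ restricts to a bijection from $\R_p$ onto $\Gamma_p(n)$.

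\emph{Step 1: identify $\R$ with $\Gamma(n)$.} I would use the conjugacy criterion stated before the proposition. Two subsets are conjugate if and only if their compositions give the same partition, except in the all-even case $|\mu_K|=n$, where there are two classes distinguished by whether $K$ contains $\t_1$ or $\u$. This is recorded by the sign $\pm$.

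Two points need care here.
\begin{enumerate}[(i)]
\item Subsets with $|\mu_K|=n-1$ cannot occur. If $W_0\neq 1$ then $n_0\geq 1$, so $m=n-n_0-1\leq n-2$.
\item Subsets with $W_0\simeq\D_1$ generated by $\u$ versus by $\t_1$ must land correctly. They have $m=n-2$, and $\langle\u\rangle$ is conjugate to $\langle\t_1\rangle$ in $\D_n$ by an element swapping signs. So these are absorbed into $\P(\leq n-2)$.
\end{enumerate}
Small rank cases such as $\D_2$ and $\D_3$ are handled by the stated conventions. This yields a bijection $\R\to\Gamma(n)$.

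\emph{Step 2: determine $\R_p$ using Lemma \ref{L: typeD Normaliser}.} The index is $a\prod_i m_i!\,2^{m_i}$ with $a\in\{\tfrac12,1\}$.
\begin{itemize}
\item For $p$ odd, the powers of $2$ and the factor $a$ are invertible. So $p$ divides the index if and only if some $m_i\geq p$, that is, if and only if $\mu_K$ is not $p$-regular. Thus $\Gamma_p(n)$ is the set of $p$-regular elements of $\Gamma(n)$.
\item For $p=2$, the index is odd only if every $m_i=0$ or the factor $a=\tfrac12$ cancels exactly one factor of $2$. The first case is $\mu_K=\varnothing$, i.e.\ $K=T$, with index $1$. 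In the second case $|\mu_K|=n$, $\mu_K$ has an odd part, and $\prod m_i!2^{m_i}=2$. This forces $\mu_K$ to have a single part, so $\mu_K=(n)$ with $n$ odd.
\end{itemize}
This gives $\Gamma_2(n)=\{\varnothing\}$ for $n$ even and $\{\varnothing,(n)\}$ for $n$ odd, as stated.

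The main obstacle I expect is Step 1: making the translation between conjugacy classes of subsets of $T$ and $\Gamma(n)$ airtight. This means checking that the all-even classes genuinely split into two $W$-classes, and that subsets of size $n$ whose partitions have an odd part do not split. I would rely on the cited results of \cite{Atkinson/Pfeiffer/vanWilligenburg:2002a} for this rather than reprove them. The rest is bookkeeping with the index formula.
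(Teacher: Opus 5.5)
Your overall route is the paper's. The simple modules are the $M_{K,F}$ with $K\in\R_p$, and your triangularity argument for their distinctness is fine. $\R$ is matched with $\Gamma(n)$ through the conjugacy classification of \cite{Atkinson/Pfeiffer/vanWilligenburg:2002a}, and $\R_p$ is cut out by Lemma~\ref{L: typeD Normaliser}. Your Step~2, including the $p=2$ case, is correct.

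The genuine error is point (ii) of Step~1. Take $K=\{t_0\}\cup K_1$ or $K=\{t_1\}\cup K_1$ with $K_1\subseteq\{t_3,\dots,t_{n-1}\}$. Such a $K$ lies in $T''$ or $T'$, so $W_K$ is a standard parabolic subgroup of $\langle T''\rangle\cong\sym{n}$ or $\langle T'\rangle\cong\sym{n}$. Hence $\mu_K$ is a composition of $n$, in which $\langle t_0\rangle$ or $\langle t_1\rangle$ contributes a part $2$; it is not absorbed into $\P(\le n-2)$. The labels in $\P(\le n-2)$ belong exactly to the subsets containing \emph{both} $t_0$ and $t_1$. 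Then $W_0=\langle t_0,t_1,\dots,t_{k-1}\rangle\cong\D_k$ with $k\ge 2$ moves $k$ coordinates, and $|\mu_K|=n-k\le n-2$. This, not $m=n-n_0-1$ with $n_0\ge 1$, is also the right reason for your point (i).

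Your rule therefore does not give a bijection $\R\to\Gamma(n)$:
\begin{itemize}
\item $\{t_1\}$ would get $(1^{n-2})$, which is the label of $\{t_0,t_1\}$ (the class of $\D_2\times 1$). These two subsets are not conjugate, since they have different cardinalities, and the correct label of $\{t_1\}$ is $(2,1^{n-2})$.
\item In $\D_4$, the subsets $\{t_0,t_3\}$ and $\{t_1,t_3\}$ are the two distinct classes $((2,2),-)$ and $((2,2),+)$. Your rule sends both to $(2)$, which is the label of $\{t_0,t_1,t_3\}$.
\end{itemize}

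The conjugacy argument you give does not rescue this. Any $g\in\D_n$ with $gt_0g^{-1}=t_1$ makes an odd number of sign changes on $\{\pm1,\pm2\}$, hence also an odd number on the remaining coordinates. In general this does not preserve $K_1$. It can be arranged exactly when the partition of $n$ has an odd part, which is precisely why all-even $\lambda\vdash n$ split into $(\lambda,\pm)$.

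The mistake also feeds into Step~2, because Lemma~\ref{L: typeD Normaliser} must be applied to the correct $\mu_K$. For $K=\{t_1,t_3,t_5\}$ in $\D_6$, the correct label $((2^3),+)$ gives index $3!\cdot 2^3=48$, which is divisible by $3$. Your label $(2,2)$ would give $8$, wrongly placing $K$ in $\R_3$.

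Your (ii) reproduces the $n_0=1$ clause of case (i) in the text before the proposition, but that clause conflicts with case (ii) there. The set $\Gamma(n)$ and Lemma~\ref{L: typeD Normaliser} are consistent only with the labelling described above. Once Step~1 uses that labelling, or simply quotes it from \cite{Atkinson/Pfeiffer/vanWilligenburg:2002a}, the rest of your argument goes through unchanged.
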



We end this section with another result by Aguiar--Bergeron--Nyman connecting descent algebras of type $\D$ and $\B$.

\begin{theorem}[{\cite[Theorem 5.12]{Aguiar/Bergeron/Nyman:2004}}]\label{T:surjD} Suppose that $n\geq 4$. There is a surjective algebra homomorphism $\gamma:\Des_F(\D_n)\to \Des_F(\B_{n-2})$ given by \[\gamma(x_J)=\left \{\begin{array}{ll}
x_{J-2}&\text{if $0, 1 \not\in J$,}\\
0&\text{otherwise,}\end{array}
\right .\] where $J-2=\{j-2:j\in J\}$.
\end{theorem}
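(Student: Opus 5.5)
The plan is to argue exactly as in the proof of Theorem \ref{T:surjB}. I would take the integral statement from \cite{Aguiar/Bergeron/Nyman:2004}, rewrite it in the basis $\{x_J\}$ and with the generator labelling used here, and then extend scalars to $F$.

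First, set $B_J=x_{J^\c}$, where $J^\c$ is the complement of $J$ in $T=[0,n-1]$. In \cite{Aguiar/Bergeron/Nyman:2004} the map is written in these complement (descent-set) bases. I expect it to read $\gamma(B_J)=B_{J-2}$ if $0,1\in J$, and $\gamma(B_J)=0$ otherwise, where $B_{J-2}$ is taken in $\Des_\Z(\B_{n-2})$. They show it is a surjective homomorphism of $\Z$-algebras $\Des_\Z(\D_n)\to\Des_\Z(\B_{n-2})$. I will also check that their Coxeter diagram of $\D_n$ matches ours: the two branch nodes are $\u,\t_1$, and $\t_2,\dots,\t_{n-1}$ form the tail. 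Under $j\mapsto j-2$ the tail $\{\t_2,\dots,\t_{n-1}\}$ is identified with the generators $s_0,\dots,s_{n-3}$ of $\B_{n-2}$, with $\t_2\mapsto s_0$.

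Second, I translate into the basis $x_J$. The condition $0,1\in J^\c$ is equivalent to $0,1\notin J$. If $0,1\notin J$, then $J^\c=\{0,1\}\cup\bigl([2,n-1]\setminus J\bigr)$. Hence $J^\c\setminus\{0,1\}$ shifted down by $2$ equals $[0,n-3]\setminus(J-2)$, which is the complement of $J-2$ in the generating set of $\B_{n-2}$. Therefore $\gamma(x_J)=\gamma(B_{J^\c})=B_{(J^\c)-2}=x_{J-2}$ when $0,1\notin J$, and $\gamma(x_J)=0$ otherwise. This is the formula in the statement.

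Finally, both descent algebras over $\Z$ are $\Z$-free with bases $\{x_J\}$ and have integral structure constants. So $\gamma\otimes_\Z F$ is an $F$-algebra homomorphism $\Des_F(\D_n)\to\Des_F(\B_{n-2})$ given by the same formula. Surjectivity is visible directly from the formula. Every $x_L$ with $L\subseteq[0,n-3]$ equals $\gamma(x_{L+2})$, and $L+2\subseteq[2,n-1]$ avoids $0$ and $1$. Hence the image contains a basis of the target.

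The main obstacle is the bookkeeping of conventions in the first two steps, not any new mathematics. The points to verify are: which of the basis $\{x_J\}$ or the complement basis ABN use, how they label the fork of $\D_n$ relative to $\u,\t_1$, and whether their shift sends $\t_2$ to the generator $s_0$ carrying the label $4$. If their indexing differs, I would compose with the corresponding diagram relabelling; a diagram automorphism swapping $\u$ and $\t_1$ induces an algebra automorphism of $\Des$. If necessary, I would confirm the formula directly on a small case such as $n=4$ using Solomon's rule \eqref{Eq:DesM}.
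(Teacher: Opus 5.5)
Your route is the paper's own: quote the integral result of \cite{Aguiar/Bergeron/Nyman:2004}, translate from the basis $B_J=x_{J^{c}}$ to $x_J$, and reduce modulo $p$. The step you call bookkeeping, however, fails: the formula you arrive at cannot be an algebra homomorphism. With the paper's convention $x_J=\sum_{w\in X_J}w$ we have $X_T=\{1\}$, so $x_T=1$. Since $0,1\in T$, your formula gives $\gamma(1)=0$. Hence $\gamma(a)=\gamma(a)\gamma(1)=0$ for every $a$, so $\gamma$ is the zero map and is not surjective onto $\Des_F(\B_{n-2})\neq 0$. Equivalently, your guessed form of the ABN map, $B_J\mapsto B_{J-2}$ when $0,1\in J$, sends $B_\emptyset=x_T=1$ to $0$. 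The $n=4$ check you proposed would have caught this at once. The defect is inherited from the statement as printed: its formula is what you get by reading the $x_J$ as the descent-set basis $B_J$. The paper's proof is only a citation, so it does not catch this either.

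The consistent version parallels $\beta(B_J)=B_{J-1}$ for $0\notin J$ in the proof of Theorem \ref{T:surjB}. It is $\gamma(B_J)=B_{J-2}$ if $0,1\notin J$ and $0$ otherwise, and it fixes $B_\emptyset=1$. Translating it exactly as you did gives
\[\gamma(x_J)=\begin{cases} x_{(J\setminus\{0,1\})-2} & \text{if } 0,1\in J,\\ 0 & \text{otherwise.}\end{cases}\]
This map is unital, and surjectivity follows from $x_L=\gamma\bigl(x_{\{0,1\}\cup(L+2)}\bigr)$ for $L\subseteq[0,n-3]$. As a check for $n=4$, Solomon's rule gives $x_{\{0,1,2\}}^2=2x_{\{0,1,2\}}+x_{\{0,1\}}$ in $\Des_\Z(\D_4)$. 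This maps to $2x_{\{0\}}+x_\emptyset$, which is exactly $x_{\{0\}}^2$ in $\Des_\Z(\B_2)$. With this correction your remaining steps go through unchanged: the structure constants are integral and you extend scalars to $F$. Nothing downstream is affected, since Corollary \ref{C:wildrepD} uses only the existence of a surjection.
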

\begin{proof} As mentioned in \cite{Aguiar/Bergeron/Nyman:2004}, the result is obtained using the multiplication rules for types $\B$ and $\D$ in \cite{Bergeron/Bergeron:1992,Bergeron/vanWilligenburg:1998}. Since the coefficients are integers, taking modulo $p$, we obtain the statement.
\end{proof}

Similar as Corollary \ref{C:wildrep}, we have the following:

\begin{corollary}\label{C:wildrepD} Let $n\geq 4$. If $\Des_F(\B_{n-2})$ is infinite type (respectively, wild), then $\Des_F(\D_n)$ is infinite type (respectively, wild).
\end{corollary}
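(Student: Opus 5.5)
This is a direct consequence of Theorem \ref{T:surjD} combined with Lemma \ref{L: surj alg}, in the same way that Corollary \ref{C:wildrep} follows from Theorem \ref{T:surjB}.

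Fix $n\geq 4$. By Theorem \ref{T:surjD}, the map $\gamma:\Des_F(\D_n)\to\Des_F(\B_{n-2})$ is a surjective homomorphism of $F$-algebras. The one point to check is that this holds over our algebraically closed field $F$ of characteristic $p$, not only over $\Z$ or $\mathbb{F}_p$. This is immediate: $\gamma$ is defined over $\Z$ on the bases $\{x_J\}$, its image contains every basis element $x_K$ of $\Des_\Z(\B_{n-2})$ (take $J=K+2$, which avoids $0,1$), and both descent algebras are obtained from their $\Z$-forms by the base change $F\otimes_\Z-$. Hence $F\otimes_\Z\gamma$ is again a surjective algebra homomorphism. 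This is exactly how the proof of Theorem \ref{T:surjD} passes to $F$.

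Now apply Lemma \ref{L: surj alg} with $A=\Des_F(\D_n)$, $B=\Des_F(\B_{n-2})$ and $\phi=\gamma$.

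\begin{itemize}
\item If $B$ has infinite representation type, then so does $A$: inflation along $\gamma$ embeds $B$-\modcat as a full subcategory of $A$-\modcat, closed under indecomposability and isomorphism classes.
\item If $B$ is wild, witnessed by a bimodule $M$, then $M$ viewed as an $A$-$F\langle X,Y\rangle$-bimodule via $\gamma$ witnesses that $A$ is wild.
\end{itemize}

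There is essentially no obstacle. The only care needed is the base change of the Aguiar--Bergeron--Nyman map, which is already handled in Theorem \ref{T:surjD}.
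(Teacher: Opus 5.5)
Your proposal is correct and is exactly the paper's proof, which simply combines the surjection $\gamma$ of Theorem~\ref{T:surjD} with Lemma~\ref{L: surj alg}. Your extra base-change check is redundant, since Theorem~\ref{T:surjD} is already stated over $F$ and already asserts surjectivity. That is fortunate: your explicit preimage $x_{K+2}$ uses the formula for $\gamma$ literally, and read literally that formula sends the identity $x_{[0,n-1]}$ to $0$, so it is presumably meant in the complemented basis $x_{J^c}$, as in Theorem~\ref{T:surjB}.
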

\begin{proof} Use Theorem \ref{T:surjD} and Lemma \ref{L: surj alg}.
\end{proof}

\section{The Ext-quivers of the Descent Algebras of Types  $\B$ and $\D$}\label{S:ExtBandD}

In this section, we examine the Ext-quivers of the descent algebras of types $\B$ and $\D$. These are essential for our determination of their representation types in the subsequent sections. In particular, we focus on the cases for type $\B_n$ for small $n$ and independently $p=2$. 

\subsection{The Ext-quivers for Type $\B$} We first begin with the following corollary. In the previous section, we have established surjective algebra maps between descent algebras of type $\B$. Using Lemma \ref{L:surjExt} and Theorem \ref{T:surjB}, we get the following:

\begin{corollary}\label{C:subquiver} Let $Q_n$ and $Q_{n+1}$ be the Ext-quivers of $\Des_F(\B_n)$ and $\Des_F(\B_{n+1})$ respectively. Then $Q_n$ is a subquiver of $Q_{n+1}$.
\end{corollary}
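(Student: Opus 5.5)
The corollary follows by applying Lemma \ref{L:surjExt} to the surjective algebra homomorphism $\beta:\Des_F(\B_{n+1})\to\Des_F(\B_n)$ of Theorem \ref{T:surjB}. Take $\Lambda=\Des_F(\B_{n+1})$ and $\Gamma=\Des_F(\B_n)$. Lemma \ref{L:surjExt} then says the Ext-quiver $Q_n$ of $\Gamma$ can be identified with a subquiver of the Ext-quiver $Q_{n+1}$ of $\Lambda$. No further hypotheses are needed, since the lemma only requires a surjective homomorphism of $F$-algebras. The descent algebras are finite-dimensional and basic by Theorem \ref{T: Sol Radical}, so their Ext-quivers are well defined in the sense of the preliminaries.

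To make the identification concrete, and useful later, I would describe the vertex embedding explicitly. Every simple $\Des_F(\B_n)$-module has the form $M_{K,F}$ with $q_K\in\P_p(\leq n)$ (Proposition \ref{P:BSimple}). Its inflation along $\beta$ is the one-dimensional module $x_J\mapsto \lambda_K(\beta(x_J))$. This value is $0$ if $0\notin J$. If $0\in J$, it equals $\varphi^F_{(J\setminus\{0\})-1}(c_K)$.

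Compare this with $M_{K+1,F}$ for $\Des_F(\B_{n+1})$, where $K+1\subseteq[1,n]$ omits $s_0$. Its Coxeter element $c_{K+1}$ acts with the same positive cycles as $c_K$ together with one extra fixed pair $\pm1$. So $q_{K+1}=q_K$, viewed as a partition of size $\leq n+1$, and it is still $p$-regular. I expect $\varphi_J(c_{K+1})=0$ whenever $0\notin J$, because $c_{K+1}$ moves no element across signs. For $0\in J$, I expect $\varphi_J(c_{K+1})=\varphi_{(J\setminus\{0\})-1}(c_K)$, by matching fixed cosets. This would give the natural inclusion $\P_p(\leq n)\subseteq\P_p(\leq n+1)$ on vertices.

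These fixed-point identities are the only step requiring real care. They are not, however, needed for the bare statement of the corollary, which follows formally from Lemma \ref{L:surjExt}. Inflation sends non-isomorphic simples to non-isomorphic simples. Also, $\Gamma/\Rad^2(\Gamma)$ is a quotient of $\Lambda/\Rad^2(\Lambda)$ as $\Lambda$-modules, so for each arrow $i\to j$ in $Q_n$ the multiplicity in $Q_{n+1}$ between the corresponding vertices is at least as large.
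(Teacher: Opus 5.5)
Your proof of the corollary is correct and is exactly the paper's argument: apply Lemma~\ref{L:surjExt} to the surjection $\beta:\Des_F(\B_{n+1})\to\Des_F(\B_n)$ of Theorem~\ref{T:surjB}.

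The explicit vertex identification you add is wrong, even though, as you say, the corollary does not need it.

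\textbf{$M_{K+1,F}$ is not the inflation.} Since $K+1\subseteq[1,n]$, the element $c_{K+1}$ lies in the type $\A$ parabolic subgroup $W_{[1,n]}\cong\sym{n+1}$. So, contrary to your expectation, it is conjugate into many $W_J$ with $0\notin J$. For instance, $\varphi_{[1,n]}(c_{K+1})=2^{t}$, where $t$ is the number of cycles of $c_{K+1}$ on $[1,n+1]$. This is nonzero in $F$ unless $p=2$, whereas $\beta(x_{[1,n]})=0$. Your reasoning ``moves no element across signs'' in fact points the other way.

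\textbf{$q_{K+1}\neq q_K$.} The new fixed point $1$ is itself a positive cycle of length one. Moreover, if $0\in K$, the negative cycle of $c_K$ becomes a positive cycle of $c_{K+1}$. For example, $K=[0,n-1]$ has $q_K=\varnothing$ but $q_{K+1}=(n+1)$.

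\textbf{The correct choice is $K'=\{0\}\cup(K+1)$.} If $[0,m_0-1]$ is the maximal interval of $K$ starting at $0$ (so $W_K\simeq\B_{m_0}\times\sym{q_K}$), then the corresponding interval of $K'$ is $[0,m_0]$. Hence $c_{K'}$ has a negative cycle, and $\varphi_J(c_{K'})=0$ whenever $0\notin J$. Its positive cycles are those of $c_K$ shifted by one, so $q_{K'}=q_K$. One checks that the inflation of $M_{K,F}$ along $\beta$ is $M_{K',F}$. This yields the natural inclusion $\P_p(\leq n)\subseteq\P_p(\leq n+1)$ on vertices that you anticipated, but via $K'$ rather than $K+1$.
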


In the case when $p\nmid |\B_n|$, we have the following explicit description of the Ext-quiver of $\Des_F(\B_n)$ due to Saliola.

\begin{theorem}[{\cite[Theorem 9.1]{Saliola:2008a}}]\label{T:SaliolaB} Suppose that $p\nmid |\B_n|$ (in this case, $\P_p(\leq n)=\P(\leq n)$). The Ext-quiver $Q$ of $\Des_F(\B_n)$ has vertices labelled by $\P(\leq n)$ and, for any $\lambda,\mu\in\P(\leq n)$, the number of arrows $\am_{\lambda,\mu}$ from $\lambda$ to $\mu$ is
\[\am_{\lambda,\mu}=\left \{\begin{array}{ll}
2&\text{if there exist $a>b>c\geq 1$ and $\delta$ such that}\\
&\text{$\lambda\approx \delta\cont (a+b+c)$ and $\mu\approx \delta\cont (a,b,c)$,}\\
1&\text{if there exist $a,b\geq 1$, $a\neq b$ and $\delta$ such that}\\
&\text{$\lambda\approx \delta\cont (a+2b)$ and $\mu\approx \delta\cont (a,b,b)$,}\\
1&\text{if there exist $a>b\geq 1$ such that $\mu\approx \lambda\cont (a,b)$,}\\
0&\text{otherwise.}
\end{array}
\right .\]
\end{theorem}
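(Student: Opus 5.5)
This statement is Saliola's theorem, and the paper quotes it from \cite[Theorem 9.1]{Saliola:2008a}. Our plan is therefore to reduce to the characteristic zero computation and then transport it to characteristic $p\nmid|\B_n|$.

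\textbf{Step 1: the case $p=\infty$.} Here we follow Saliola's approach via the face semigroup of the type $\B$ hyperplane arrangement. The descent algebra $\Des_F(\B_n)$ is the algebra of $\B_n$-invariants in the face semigroup algebra of the Coxeter arrangement, and that semigroup algebra is a left regular band algebra. Its Ext-quiver and a quadratic presentation are known. One first builds a complete set of primitive orthogonal idempotents of the invariant algebra as symmetrisations of idempotents in the semigroup algebra, indexed by $\B_n$-orbits of flats. In type $\B$ these orbits are labelled by partitions of size $\le n$, the type of the fixed subspace, which matches Proposition~\ref{P:BSimple}. One then computes $e_\mu\Rad(A)e_\lambda/e_\mu\Rad^2(A)e_\lambda$. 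For the face algebra, $\Ext^1$ between simples indexed by flats $X\supset Y$ is nonzero only when $Y$ has codimension one in $X$. Its dimension is governed by the number of chambers of the rank-one restriction minus one, which here is $1$.

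\textbf{Step 2: taking invariants.} Passing to $W$-invariants, the dimension of $\Ext^1$ between orbit classes is the dimension of the invariants of the stabiliser of the pair of flats acting on the corresponding sign-type space. A codimension-one degeneration in type $\B$ does one of two things.
\begin{itemize}
\item It merges two positive cycles of lengths $a,b$ into one of length $a+b$ inside the symmetric part.
\item It absorbs one cycle into the $\B_{m_0}$ factor, i.e.\ $\mu\approx\lambda\cont(a,b)$-type versus the sign part.
\end{itemize}
The three cases in the formula come from counting orbit representatives of such degenerations and the invariants of the two-element sign action.
\begin{itemize}
\item Distinct parts $a>b>c$ give $2$ invariants.
\item A repeated part $b,b$ loses one through the swap symmetry.
\item Equal parts $a=b$ kill the invariant completely in the last case.
\end{itemize}
Carrying out this orbit/invariant computation is the main obstacle. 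It requires careful bookkeeping of stabilisers of flag pairs in $C_2\wr\sym{n}$. We would reuse Saliola's type $\A$ computation (Schocker's quiver) for the $\sym{q}$ factor and handle the $C_2$ signs separately.

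\textbf{Step 3: passing to $p\nmid|\B_n|$.} When $p\nmid|W|$, the invariant construction and the symmetrising idempotents (division by $|W|$) are defined over $\Z_{(p)}$. The radical filtrations reduce well, so the Ext-quiver over $F$ equals the one over $\mathbb{C}$. Lemma~\ref{L:Extmodp} gives one inclusion of arrows. Equality follows from Theorem~\ref{T: APW} with $D$ the identity, since $\P_p(\le n)=\P(\le n)$: the Cartan matrices agree, so no extra arrows can appear. Alternatively, it follows from Corollary~\ref{C:BensonLimExtQuiver}, provided $n_W$ divides a power of $|W|$.
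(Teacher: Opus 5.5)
The paper gives no proof of this statement. It quotes Saliola's Theorem 9.1, which is already formulated over fields with $p\nmid|W|$. Your reconstruction has a genuine gap in Step 2.

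The principle you use is false in general: that arrows of $A^W$ between two orbits of flats (with $A$ the face semigroup algebra) are the stabiliser-invariants of the codimension-one $\Ext^1$ spaces of $A$. When $|W|$ is invertible one has $\Rad(A^W)=\Rad(A)^W$. However, $\Rad^2(A^W)=(\Rad(A)^W)^2$ can be strictly smaller than $(\Rad^2(A))^W$, and invariant elements of $\Rad^2(A)$ that are not products of invariant elements of $\Rad(A)$ create new arrows.

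In type $\B$ this is the whole phenomenon. The central element $w_0=-1$ fixes every flat but interchanges the two chambers of every rank-one interval $[Y,X]$. So it acts by $-1$ on every codimension-one $\Ext^1$ space. Carried out correctly, your Step 2 therefore produces no arrows at all, which already contradicts $\dim\Des_F(\B_3)=8>7=|\P(\le 3)|$. Correspondingly, every arrow in the statement joins partitions whose numbers of parts differ by \emph{two}: $(a+b+c)$ and $(a,b,c)$, $(a+2b)$ and $(a,b,b)$, $\lambda$ and $\lambda\cont(a,b)$. These are flats of codimension two. Your two degenerations (merging two blocks, zeroing one block) change the number of parts by one, so they cannot yield any of the three cases. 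Your case list, e.g.\ ``absorbing one cycle'' giving $\mu\approx\lambda\cont(a,b)$, is fitted to the answer rather than derived from your mechanism.

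The missing idea is to use Saliola's $W$-equivariant quadratic (Koszul) presentation of the face semigroup algebra, so that $W$ acts by graded automorphisms, and then compute $A^W_{\ge 1}/(A^W_{\ge 1})^2$. In type $\B$ the generators sit in degree two. They are $W$-invariant combinations of length-two paths across rank-two intervals $[Y,X]$, taken modulo the single quadratic relation of that interval. For $X$ of type $\delta\cont(a,b,c)$ and $Y$ of type $\delta\cont(a+b+c)$, there are three intermediate flats and one relation, which gives the multiplicity $2$. In the other cases, further stabiliser symmetries (sign changes of single blocks, swaps of equal blocks) cut the invariants down to $1$ or $0$. One must also show that invariants of higher even degree are generated in degree two.

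Step 3 is not sound as written either. Equal Cartan matrices do not control radical layers: a composition factor lying in $\Rad^2$ in characteristic zero is not prevented by the Cartan matrix from appearing in $\Rad/\Rad^2$ in characteristic $p$. An example is the factor given by the path $\varnothing\to 41\to 21^3$ for $\B_5$. The alternative via Corollary~\ref{C:BensonLimExtQuiver} rests on an unproved relation between $n_W$ and $|W|$. Step 3 is also unnecessary: the face algebra, its presentation and averaging over $W$ all make sense over $F$ once $p\nmid|W|$, which is exactly the setting in which the paper quotes the theorem.
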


The computation of the Ext-quiver of $\Des_\F(\B_n)$ remains open when $p\mid |\B_n|$. However, we have the following special case. 

\begin{lemma}\label{L:p2ExtQ} Let $p=2$. The Ext-quiver of $\Des_F(\B_n)$ is a single vertex with at least $n$ loops.
\end{lemma}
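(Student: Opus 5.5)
The plan is to first reduce the statement to a question about $\Rad/\Rad^2$ of a local algebra. Then I produce $n$ linearly independent functionals on that space, built from pairs of the integral characters $\lambda_K$ that agree modulo $2$.

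\emph{Step 1: one vertex.} By Proposition \ref{P:BSimple} the simple modules are indexed by the $2$-regular partitions, that is, by those $K\in\R$ with $[\N_W(W_K):W_K]$ odd. By Lemma \ref{L:typeBcompo}(ii) this index is $2^t\prod a_i!$, where $t$ is the number of parts of $q_K$. It is odd only when $t=0$, i.e. $q_K=\varnothing$ and $K=S$. So $A=\Des_F(\B_n)$ is local, with the single simple module $M_{S,F}$. The number of loops of the Ext-quiver is therefore $\dim \Rad(A)/\Rad^2(A)$. It suffices to construct $n$ linearly independent linear functionals on $\Rad(A)$ that vanish on $\Rad^2(A)$.

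\emph{Step 2: functionals from pairs of characters.} Let $R=\{(u,v)\in\Z^2: u\equiv v \bmod 2\}$. I will check that $\psi:R\to F[\varepsilon]/(\varepsilon^2)$, $(u,v)\mapsto \bar u+\varepsilon\,\overline{(v-u)/2}$, is a ring homomorphism. The key identity is
\[\tfrac{vv'-uu'}{2}=u\tfrac{v'-u'}{2}+u'\tfrac{v-u}{2}+2\tfrac{v-u}{2}\tfrac{v'-u'}{2}.\]
Next, for $J\neq S$ we have $\lambda_S(x_J)=\beta_{JS}=0$, because $\beta_{JS}\ne0$ forces $S\subseteq_W J$ by Lemma \ref{L:beta}. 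Moreover, $\beta_{JK}$ is divisible by $\beta_{JJ}$, which is even for $J\neq S$. Hence $\lambda_K(x)\equiv\lambda_S(x)\pmod 2$ for all $x\in\Des_\Z(\B_n)$.

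So for each $K$ the map $x\mapsto\psi(\lambda_S(x),\lambda_K(x))$ is an algebra homomorphism $\Des_\Z(\B_n)\to F[\varepsilon]/(\varepsilon^2)$. It factors through $A$, and it sends $\Rad(A)$, which is the kernel of the unique character, into $F\varepsilon$. It therefore kills $\Rad^2(A)$. Reading off the $\varepsilon$-coefficient gives a functional $f_K$ on $\Rad(A)/\Rad^2(A)$. For $J\neq S$ we have $x_J\in\Rad(A)$ by Theorem \ref{T: Sol Radical}, and $f_K(x_J)=\overline{\beta_{JK}/2}$.

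\emph{Step 3: independence.} I take $K_1,\dots,K_n\in\R$ with $q_{K_i}=(i)$, a single part. By Lemma \ref{L:typeBcompo}(ii), $\beta_{K_iK_i}=2$. Evaluate $f_{K_i}$ on $x_{K_j}$, noting $K_j\ne S$. The resulting matrix is $(\overline{\beta_{K_jK_i}/2})$, which has $1$'s on the diagonal. It is triangular for the fixed total order on $\R$ under which the parabolic table of marks is lower triangular. Hence the $f_{K_i}$ are linearly independent, which gives at least $n$ loops.

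The main point to verify carefully is this triangularity of the submatrix, i.e. that the chosen order makes $\beta_{K_jK_i}=0$ on one side of the diagonal. This follows from the last statement of Lemma \ref{L:beta}: $\beta_{JK}\neq0$ implies $K\subseteq_W J$, and $K_j\subseteq_W K_i$ forces $j\le i$ when comparing the sizes of the type-$\B$ components. The rest is routine.
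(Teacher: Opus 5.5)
Your proof is correct, but it goes the opposite way from the paper's.

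\textbf{The paper's route.} It bounds $\Rad^2(A)$ from above. Setting $X(m)=\Span\{x_J:|J|\le n-m\}$, it uses $x_Jx_K=\sum_{L\subseteq K}a_{JKL}x_L$ together with $2\mid\beta_{JJ}\mid a_{JKK}$ for $J\neq S$. By induction this gives $\Rad^m(A)\subseteq X(m)$, so $\Rad(A)/\Rad^2(A)$ maps onto $X(1)/X(2)$, which has dimension $n$.

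\textbf{Your route.} You bound $\Ext^1$ from below instead. You write down explicit algebra maps $A\to F[\varepsilon]/(\varepsilon^2)$, which are explicit two-dimensional self-extensions of $M_{S,F}$, built from integral characters congruent to $\lambda_S$ modulo $2$. This uses only the table of marks, not the structure constants. Both arguments rest on the same divisibility in Lemma~\ref{L:beta}. Your $K_1,\dots,K_n$ are exactly the $n$ subsets $S\setminus\{s_k\}$ whose basis elements span $X(1)/X(2)$ in the paper.

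\textbf{What each buys.} The paper's filtration controls all radical powers, giving $\Rad^{n+1}(A)=0$. It is also the framework for the conjecture and Remark~\ref{R:p2ExtQ}, and it needs only that $\beta_{JJ}$ is even. Your construction gives nothing beyond $\Ext^1$ and needs $\beta_{K_iK_i}$ to be exactly $2$, which holds here. On the other hand it is more portable. The map $(u,v)\mapsto\bar u+\varepsilon\,\overline{(v-u)/p}$ is a ring homomorphism for every prime $p$. So the same argument gives lower bounds on $\dim\Ext^1(M,M)$ for any simple module $M$ of any $\Des_F(W)$, from congruences modulo $p$ among the characters $\lambda_K$.

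\textbf{Two small slips.}
\begin{enumerate}
\item From $\beta_{K_jK_i}\neq 0$, Lemma~\ref{L:beta} gives $K_i\subseteq_W K_j$, not $K_j\subseteq_W K_i$. In fact no ordering is needed. All $K_i$ have size $n-1$ and are pairwise non-conjugate, so $\beta_{K_jK_i}=2\delta_{ij}$ and your matrix is the identity.
\item For $p=2$ the index set $\R_2$ is not the set of $2$-regular partitions; for example $(1)$ is $2$-regular but has index $2$. Your actual deduction, via odd index, is the correct one.
\end{enumerate}
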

\begin{proof} The algebra $A=\Des_F(\B_n)$ has a unique simple module and $\Rad(A)$ has the basis $\{x_J:J\neq [0,n-1]\}$. For each $i\in [0,n]$, let \[X(i)=\Span\{x_J:|J|\leq n-i\}.\] We claim that $\Rad^m(A)\subseteq X(m)$.

It is true for $m=0,1$. Suppose that $\Rad^m(A)\subseteq X(m)$ for some $m\in [1,n]$. We consider $\Rad^{m+1}(A)=\Rad(A)\Rad^m(A)$. Let $|K|\leq n-m$ and $x_J\in \Rad(A)$, i.e., $J\neq [0,n-1]$. By Lemma \ref{L:beta}, we have $2\mid a_{JJJ}\mid a_{JKK}$. Therefore, $x_Jx_K\in X(m+1)$. As such, $\Rad^{m+1}(A)\subseteq \Rad(A)X(m)\subseteq X(m+1)$. Apply to the case $m=1$, we have $\dim_F(\Rad(A)/\Rad^2(A))\geq \dim_F(X(1)/X(2))=n$. Thus the Ext-quiver of $A$ has at least $n$ loops.
\end{proof}

We end this subsection with the following conjecture.

\begin{conjecture} Let $p=2$ and $M$ be the unique simple module for the descent algebra $\Des_F(\B_n)$. The
multiplicity of $M$ in the $i$th radical layer of the projective cover $\Proj(M) = \Des_F(\B_n)$ is given by the
binomial coefficient ${n\choose i}$. In particular, its Ext-quiver is a single vertex with $n$ loops:
\[\begin{tikzcd}
\sb \arrow[out=0,in=30,loop,distance=20pt]\arrow[out=72,in=102,loop,distance=20pt] \arrow[out=144,in=174,loop,distance=20pt]\arrow[out=216,in=246,loop,distance=20pt]
\end{tikzcd}
\begin{tikzpicture}[remember picture, overlay]
\draw [dotted] (-.9,-.3) to[bend right] (-.65,0);
\node at (0,-.2) {\ $n$ loops};
\end{tikzpicture}\]
\end{conjecture}

\begin{remark}\label{R:p2ExtQ} To prove the conjecture, we are left to prove that $X(m+1)\subseteq \Rad^{m+1}(A)$ as in the proof of Lemma \ref{L:p2ExtQ}. 
\end{remark}

\subsection{Type $\B$ descent algebras for small $n$}  We now focus  on the Coxeter groups $\B_n$ where $2\leq n\leq 6$. In particular, we aim to describe not only their Ext-quivers but structures which will be useful for us in the subsequent sections.

Recall the simple modules for descent algebras given in Lemma \ref{L: matrix M_F}. To begin, for each $\lambda\in \P_p(\leq n)$, let $J\subseteq [0,n-1]$  be the corresponding subset of $S$, and we write \[M_\lambda:=M_{J,F}\] by suppressing the field $F$. For example, $M_\varnothing=M_{[0,n-1],F}$. 

In order to study the representation type of $\Des_F(\B_n)$, we will need to understand the algebra in detail for some small values of $n$. They are presented in Lemmas \ref{L:p3n4iso}, \ref{L:Q22}, \ref{L:n5} and \ref{L:n6} below.

\begin{lemma}\label{L:p3n4iso}\
\begin{enumerate}[(i)]
\item Suppose that $p\geq 5$. We have $\Des_F(\B_4)\cong F Q$ where $Q$ is the Ext-quiver of $\Des_F(\B_4)$ given below:  \[\begin{tikzcd}[column sep=1.5em]
31&\varnothing\arrow[l]\arrow[r]& 21& 4\arrow[r]&211&1\arrow[l]&
3&2^2&2&1^2&1^3 & 1^4
\end{tikzcd}
\]
\item Suppose that $p=3$. The Ext-quiver $Q$ of $\Des_F(\B_4)$ is given as follows:
\[\begin{tikzcd}[column sep=1.5em]
31\arrow[out=75,in=105,loop,distance=20pt,swap,"\varepsilon"]&\varnothing\arrow[l,"\alpha"]\arrow[r]& 21& 4\arrow[r]&211&1\arrow[l]&
3\arrow[out=75,in=105,loop,distance=20pt,swap,"\delta"]& 2^2&2&1^2
\end{tikzcd}
\] Furthermore, $\Des_F(\B_4)\cong F Q/(\varepsilon^2,\delta^2,\varepsilon\alpha)$.
\end{enumerate}
\end{lemma}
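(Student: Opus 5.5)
The plan is to determine the Ext-quiver by combining Saliola's description (Theorem \ref{T:SaliolaB}) with the Cartan matrix from Theorem \ref{T: APW}. I will then identify the algebra through Gabriel's Theorem \ref{T:Gabriel} and a dimension count, using $\dim_F\Des_F(\B_4)=2^4=16$.

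\emph{Part (i).} Here $p\geq 5$ and $|\B_4|=2^7\cdot 3$, so $p\nmid|\B_4|$ and Theorem \ref{T:SaliolaB} applies. The vertices are the $12$ partitions of size at most $4$. I will list the arrows case by case.
\begin{itemize}
\item The first case needs $a+b+c\leq 4$ with $a>b>c\geq 1$. This is impossible, since the smallest such sum is $6$.
\item The second case needs $a+2b\leq 4$ with $a\neq b$. This forces $(a,b)=(2,1)$ and $\delta=\varnothing$, giving the arrow $4\to 211$.
\item The third case needs $\mu\approx\lambda\cont(a,b)$ with $a>b\geq 1$ and $|\mu|\leq 4$. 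The options $(a,b)=(2,1)$ and $(3,1)$ give the arrows $\varnothing\to 21$, $1\to 211$ and $\varnothing\to 31$.
\end{itemize}
This is exactly the displayed quiver $Q$. It has no paths of length $2$, so $\dim FQ=12+4=16$. By Theorem \ref{T:Gabriel}, $\Des_F(\B_4)\cong FQ/I$ with $I\subseteq (FQ^+)^2=0$. Hence $I=0$ and the isomorphism follows.

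\emph{Part (ii).} For $p=3$, Lemma \ref{L:typeBcompo}(ii) shows that the simple modules are indexed by the $3$-regular partitions, namely the $12$ partitions minus $1^3$ and $1^4$.
\begin{itemize}
\item \emph{Decomposition matrix.} I will compute $D$ by reducing the relevant columns of the parabolic table of marks $M^c(\B_4)$ modulo $3$. The expected outcome is that the two missing labels $1^3$ and $1^4$ become identified with $3$ and $31$. The mechanism is that $\varphi_J(c)\equiv\varphi_J(c')\pmod 3$ when $c$ and $c'$ differ by a $3$-element. I must check this on the actual table entries.
\item \emph{Cartan matrix.} In characteristic $\infty$, part (i) gives $C=I+N$, where $N$ is the adjacency matrix of the four arrows. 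Then $\wt C=D^\top CD$ by Theorem \ref{T: APW}. I expect diagonal entries $2$ at $31$ and at $3$, entry $1$ for $[P(\varnothing):M_{31}]$, and the remaining arrows unchanged. The sum of the entries must be $16$.
\item \emph{Arrows persist.} The four arrows from characteristic $\infty$ survive by Lemma \ref{L:Extmodp}, applied to the $\Z$-forms $M_{K,\Z}$ over a $3$-modular system.
\item \emph{Loops.} A diagonal Cartan entry $2$ at $31$ forces $\Rad P(31)\neq 0$ with only $M_{31}$ as a composition factor. Its top therefore gives a loop $\varepsilon$, and similarly a loop $\delta$ at $3$.
\item \emph{No further arrows.} Any additional arrow would contribute a composition factor not accounted for in $\wt C$, so there are none.
\end{itemize}

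\emph{Relations.} The Cartan entries force the relations: $\varepsilon^2=0$ and $\delta^2=0$ because the diagonal entries are $2$, and $\varepsilon\alpha=0$ because $[P(\varnothing):M_{31}]=1$. So $\Des_F(\B_4)$ is a quotient of $FQ/(\varepsilon^2,\delta^2,\varepsilon\alpha)$. That algebra has basis the $10$ idempotents and the $6$ arrows, since every path of length $2$ is killed. Its dimension is therefore $16$, which forces the isomorphism.

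The main obstacle is the decomposition-matrix step. It means writing down enough of $M^c(\B_4)$ to see exactly which columns coincide modulo $3$. I would do this by computing fixed points of the Coxeter elements $c_K$ on the cosets $W/W_J$ via Lemma \ref{L:beta}, or by checking it directly in Magma.
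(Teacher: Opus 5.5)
Your proposal is correct and follows essentially the same route as the paper. For (i) you use Saliola's theorem together with Gabriel's theorem and $\dim\Des_F(\B_4)=16$. For (ii) you use the Cartan matrix $\wt C=D^\top CD$, with $1^4$ and $1^3$ merging into $31$ and $3$, to obtain the two loops, let the four characteristic-zero arrows persist, and then use the count $16$ to pin down both $Q$ and the relations. Your commuting-$3$-element argument for the decomposition matrix and your appeal to Lemma \ref{L:Extmodp} for the surviving arrows just make explicit two steps the paper leaves terse (the paper cites Corollary \ref{C:subquiver} at the latter point).
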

\begin{proof} (i) Since $5\nmid |\B_4|$, by Theorem \ref{T:SaliolaB}, $Q$ is given as in the statement. Since $\dim_F F Q=16=\dim_F\Des_F(\B_4)$, we get the isomorphism.

(ii) We claim that there are loops at both $3$ and $31$ of $Q$. Once we have proved this, by Corollary \ref{C:subquiver} and part (i), $Q$ contains the subquiver as in the statement. However, the total number of vertices and arrows of the quiver above is $16$ which is the dimension of $\Des_F(\B_4)$. As such, $Q$ must be exactly as stated and $\Des_F(\B_4)\cong F Q/(\varepsilon^2,\delta^2,\varepsilon\alpha)$.

Fix the total order for $\P(\leq 4)$ (and hence for $\P_3(\leq 4)$ as a subset) as follows:
\[(1^4)<( 2, 1^2 )<(1^3)<
( 3, 1 )<
( 2, 2 )<
( 2, 1 )<
( 1^2 )<
( 4 )<
( 3 )<
( 2 )<
( 1 )<
\varnothing.\] By part (i), it is easy to write down the Cartan matrix $C$ of $\Des_\Q(\B_4)$. Let $D$ be the decomposition matrix. By Theorem \ref{T: APW}, the Cartan matrix for $\Des_{\Fp}(\B_4)$ is
\[\wt C=D^\top CD={\scriptsize\begin{bmatrix}
1&0&0&0&0&0&0&0&0&0\\
0&2&0&0&0&0&0&0&0&0\\
0&0&1&0&0&0&0&0&0&0\\
0&0&0&1&0&0&0&0&0&0\\
0&0&0&0&1&0&0&0&0&0\\
1&0&0&0&0&1&0&0&0&0\\
0&0&0&0&0&0&2&0&0&0\\
0&0&0&0&0&0&0&1&0&0\\
1&0&0&0&0&0&0&0&1&0\\
0&1&0&1&0&0&0&0&0&1
\end{bmatrix}}\] From the (second and seventh) rows of $\wt C$ labelled by $(3,1)$ and $(3)$, we observe that the respective projective covers are
\begin{align*}
\Proj(M_{31})&=\begin{bmatrix} M_{31}\\ M_{31}\end{bmatrix},\quad
\Proj(M_{3})=\begin{bmatrix} M_{3}\\ M_{3}\end{bmatrix}.\qedhere
\end{align*}
\end{proof}

\begin{lemma}\label{L:Q22} Let $p=2$.
\begin{enumerate}[(i)]
\item The Ext-quiver $Q$ of $\Des_F(\B_2)$ is $\begin{tikzcd}
\sb \arrow[out=165,in=195,loop,distance=20pt,swap,"\alpha"]\arrow[out=15,in=-15,loop,distance=20pt,"\beta"]\end{tikzcd}$ and $\Des_F(\B_2)\cong F Q/(\alpha^2,\beta^2-\alpha\beta,\alpha\beta-\beta\alpha)$.
\item The Ext-quiver of $\Des_F(\B_3)$ is $\begin{tikzcd}
\sb\arrow[out=165,in=195,loop,distance=20pt]\arrow[out=45,in=75,loop,distance=20pt]\arrow[in=-45,out=-75,loop,distance=20pt]\end{tikzcd}$.
\end{enumerate}
\end{lemma}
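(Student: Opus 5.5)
We work directly with the basis $\{x_J : J\subseteq S\}$ and Solomon's structure constants from Equation \eqref{Eq:DesM}. Since $p=2$, Lemma \ref{L:p2ExtQ} tells us that $A=\Des_F(\B_n)$ is local with $\Rad(A)=\Span\{x_J: J\neq S\}$. For $n=2$, $\dim A=4$; for $n=3$, $\dim A=8$.

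\emph{Part (i).} With $S=\{0,1\}$, the radical is $\Span\{x_\emptyset,x_{\{0\}},x_{\{1\}}\}$.
\begin{enumerate}[(1)]
\item We compute the products $x_Jx_K$ for $J,K\subseteq S$ via the double cosets $X_{JK}$ in the dihedral group of order $8$, and reduce mod $2$. Equivalently, we use that $a_{JKL}$ counts $w\in X_{JK}$ with $J^w\cap K=L$.
\item Lemma \ref{L:beta} shows that $x_\emptyset$ multiplies everything into multiples of $x_\emptyset$ with coefficients divisible by $|X_{\emptyset K}|$. Combined with the filtration $X(i)$ from Lemma \ref{L:p2ExtQ}, this should show $\Rad^2(A)=\Span\{x_\emptyset\}$ and $\Rad^3(A)=0$. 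We then pick $\alpha,\beta$ as suitable combinations of $x_{\{0\}}$, $x_{\{1\}}$ and $x_\emptyset$ representing a basis of $\Rad/\Rad^2$, so that two loops exist.
\item To get exactly two loops we need $x_\emptyset\in\Rad^2(A)$, i.e.\ some product of radical elements yields a nonzero multiple of $x_\emptyset$ mod $2$. The natural candidate is $x_{\{0\}}x_{\{1\}}$, where $\{0\}$ and $\{1\}$ are not conjugate in $\B_2$, so the term $a_{\{0\}\{1\}\emptyset}$ should be $1$ mod $2$ (one double coset).
\end{enumerate}
Once $\alpha,\beta$ are fixed, we verify the relations $\alpha^2=0$, $\beta^2=\alpha\beta$, and $\alpha\beta=\beta\alpha$. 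The surjection $FQ\to A$ then factors through $FQ/(\alpha^2,\beta^2-\alpha\beta,\alpha\beta-\beta\alpha)$. This quotient is spanned by $1,\alpha,\beta,\alpha\beta$, since every monomial of degree $3$ vanishes: $\alpha\beta\alpha=\alpha^2\beta=0$ and $\beta^3=\alpha\beta^2=\alpha^2\beta=0$. It therefore has dimension at most $4=\dim A$, which gives the isomorphism. The main obstacle is choosing $\alpha,\beta$ so that the relations come out exactly in this normal form, and we expect to adjust by multiples of $x_{\{0\}}$, $x_{\{1\}}$ and the socle element. Commutativity is plausible because $\Des(\B_2)$ is small. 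We would check it directly from the multiplication table, or note that $\theta$ has a kernel small enough to force it.

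\emph{Part (ii).} Lemma \ref{L:p2ExtQ} gives at least $3$ loops. Corollary \ref{C:subquiver} does not bound the number of loops from above, so we show $X(2)\subseteq\Rad^2(A)$ as in Remark \ref{R:p2ExtQ}. Here $X(2)=\Span\{x_J:|J|\le 1\}$ has dimension $4$. For each $J$ with $|J|=1$, we exhibit $x_J$ modulo $X(3)=\Span\{x_\emptyset\}$ as a product $x_Kx_L$ with $|K|=|L|=2$. Using \eqref{Eq:DesM}, the coefficient of $x_J$ with $J=K\cap L$ comes from the identity double coset and equals $1$, while the other contributions lie in $X(2)$. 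We handle them by a triangularity argument on the $3\times 3$ matrix expressing the three products $x_Kx_L$ (for the distinct pairs of $2$-subsets) in terms of $x_{\{0\}},x_{\{1\}},x_{\{2\}}$ mod $X(3)$, and check that it is invertible mod $2$. Finally, we show $x_\emptyset\in\Rad^2(A)$ via a product of non-conjugate one-element subsets, as in part (i). Then $\dim\Rad/\Rad^2=8-1-4=3$.
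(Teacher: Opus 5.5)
Your overall framework matches the paper's. The filtration $X(i)$ from Lemma \ref{L:p2ExtQ} gives $\Rad^2(A)\subseteq X(2)$, and the lower bound comes from exhibiting explicit products. Your dimension count for the presentation in (i) is correct. The gap is in the lower bound: every product you single out has even structure constants and so vanishes in characteristic $2$.

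\textbf{The element $x_\emptyset$.} For non-conjugate singletons $\{s\},\{t\}$, every double coset $W_{\{s\}}wW_{\{t\}}$ has trivial intersection and size $4$. Hence $x_{\{s\}}x_{\{t\}}=\tfrac{|W|}{4}\,x_\emptyset$. In $\B_2$ there are two double cosets, represented by $1$ and $s_1s_0$, not one. So $x_{\{0\}}x_{\{1\}}=2x_\emptyset=0$. In $\B_3$ the same product is $12x_\emptyset=0$. Your proposed source of $x_\emptyset\in\Rad^2(A)$ therefore fails in both parts. What works in (i) is a square: $x_{\{0\}}^2=2x_{\{0\}}+x_\emptyset\equiv x_\emptyset$. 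This is what the paper uses ($x^2=z=y^2$, $xy=0=yx$). Actually computing the $4$-dimensional table, as you say you would, repairs (i).

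\textbf{Part (ii).} The claim that the coefficient of $x_{K\cap L}$ in $x_Kx_L$ comes only from the identity double coset and equals $1$ is false. Other minimal double coset representatives can give the same intersection. Computing minimal representatives in $\B_3$ gives:
\begin{align*}
x_{\{0,1\}}x_{\{0,2\}}&=2x_{\{0\}}+x_{\{2\}}, & x_{\{0,2\}}x_{\{0,1\}}&=2x_{\{0\}}+x_{\{1\}},\\
x_{\{0,1\}}x_{\{1,2\}}&=x_{\{1\}}+x_{\{2\}}, & x_{\{1,2\}}x_{\{0,1\}}&=2x_{\{1\}},\\
x_{\{0,2\}}x_{\{1,2\}}&=x_{\{1\}}+x_{\{2\}}+x_\emptyset, & x_{\{1,2\}}x_{\{0,2\}}&=2x_{\{2\}}+x_\emptyset.
\end{align*}
You can check the $x_{\{0\}}$-coefficient independently by evaluating \eqref{Eq:Mackey} at $s_0$. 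For distinct $2$-subsets $K,L$ it equals $\varphi_K(s_0)\varphi_L(s_0)/8\in\{0,2\}$. So no product of two distinct $2$-subsets has an odd coefficient on $x_{\{0\}}$. Your $3\times 3$ matrix then has a zero column and is singular mod $2$ for every choice of orderings.

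The missing ingredient is again a square: $x_{\{0,1\}}^2=2x_{\{0,1\}}+x_{\{0\}}$. Combine it with
\[
x_{\{0,2\}}x_{\{0,1\}}\equiv x_{\{1\}},\qquad x_{\{0,1\}}x_{\{0,2\}}\equiv x_{\{2\}},\qquad x_{\{1,2\}}x_{\{0,2\}}\equiv x_\emptyset .
\]
This gives $X(2)\subseteq\Rad^2(A)$, i.e.\ the equality $\Rad^2(A)=X(2)$ the paper asserts, and hence exactly three loops.
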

\begin{proof} (i) Let $A:=\Des_F(\B_2)$. Since $p=2$, there is a unique $\Des_F(\B_2)$-module $M$. Let $x=x_{\{0\}}$, $y=x_{\{1\}}$ and $z=x_\emptyset$. We have $x,y,z\in\Rad(A)$. It is easy to check that $x^2=z=y^2$ and $xy=0=yx$. We have the regular module \begin{equation}\label{Eq:Q22} \Proj(M)=\begin{bmatrix} M\\ M\oplus M\\ M\end{bmatrix}.
\end{equation} This explains the Ext-quiver $Q$. The isomorphism is obtained by identifying $\alpha,\beta$ with $x+y$ and $y$ respectively.

(ii) We have seen that $\Rad^2(A)\subseteq X(2)$ (see the notation in the proof of Lemma \ref{L:p2ExtQ}). In this case, $\Rad^2(A)=\Span\{x_J:|J|=0,1\}=X(2)$. So the Ext-quiver of $\Des_F(\B_3)$ is as stated.
\end{proof}

For the proof of the next lemma, we require the following general fact.

\begin{proposition}\label{P:projsimple} Let $A$ be an $\F$-algebra and $M$ be an $A$-module. Then every projective composition factor of $M$ is a summand of the socle of $M$.
\end{proposition}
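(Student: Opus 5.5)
Let $S$ be a simple module that is projective and occurs as a composition factor of $M$. The aim is to show that $S$ is a submodule of $M$, and that it splits off in $\soc(M)$. The idea is to turn the composition-factor condition into a nonzero homomorphism $S\to M$, using the fact that $S$ is its own projective cover.

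First, I use projectivity. Since $S=\Proj(S)$, the multiplicity $[M:S]$ equals $\dim_F\Hom_A(S,M)$ divided by $\dim_F\mathrm{End}_A(S)$. This follows because $\Hom_A(P,-)$ is exact for $P=\Proj(S)$, so its dimension is additive along a composition series, and $\Hom_A(\Proj(S),T)\neq 0$ for a simple $T$ exactly when $T\cong S$. Concretely, I induct on the length of $M$. Choose a short exact sequence $0\to M'\to M\to T\to 0$ with $T$ simple. Applying the exact functor $\Hom_A(S,-)$ gives
\[\dim\Hom_A(S,M)=\dim\Hom_A(S,M')+\dim\Hom_A(S,T).\]
Here $\dim\Hom_A(S,T)$ is nonzero exactly when $T\cong S$. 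Hence, if $S$ is a composition factor of $M$, then $\Hom_A(S,M)\neq 0$.

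Next, I conclude. Any nonzero map $f\colon S\to M$ is injective because $S$ is simple. Its image is therefore a simple submodule of $M$ isomorphic to $S$, and so lies in $\soc(M)$. Since $\soc(M)$ is semisimple, every submodule of it, in particular $f(S)$, is a direct summand. So $S$ is a summand of $\soc(M)$. Running the same count gives more: the multiplicity of $S$ in $\soc(M)$ equals $\dim\Hom_A(S,\soc(M))=\dim\Hom_A(S,M)$ (up to the factor $\dim\mathrm{End}_A(S)$), which is $[M:S]$. Thus every occurrence of $S$ as a composition factor lies in the socle, which is the form needed in later lemmas.

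The only real point is the exactness and multiplicity count for $\Hom_A(S,-)$. It is routine once we note that a simple projective module is its own projective cover. No step here is a serious obstacle.
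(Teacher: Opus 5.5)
Your proof is correct, but it takes a somewhat different route from the paper's.

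\textbf{The paper's argument} is local. Given submodules $N_1\subseteq N_2\subseteq M$ with $N_2/N_1\cong S$, projectivity of $S$ splits the surjection $N_2\to S$. So $N_2=N_1\oplus S'$ with $S'\cong S$, and $S'$ is a simple submodule of $M$, hence lies in $\soc(M)$.

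\textbf{Your argument} uses projectivity through exactness of $\Hom_A(S,-)$. From this you get the count $\dim_F\Hom_A(S,M)=[M:S]\cdot\dim_F\mathrm{End}_A(S)$, and you extract an embedding $S\hookrightarrow M$ from $\Hom_A(S,M)\neq 0$.

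\textbf{Comparison.} Both rest on the same lifting property of $S$. The paper's version is shorter and needs no dimension bookkeeping. It also avoids the factor $\dim_F\mathrm{End}_A(S)$, though that factor is $1$ here since $F$ is algebraically closed. Your version gives the sharper statement $[\soc(M):S]=[M:S]$ at once, via $\Hom_A(S,\soc(M))=\Hom_A(S,M)$.

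The splitting argument recovers that sharper statement as well. Apply it at every occurrence of $S$ in a fixed composition series $0=M_0\subset\cdots\subset M_r=M$. This produces copies $S_i'\subseteq M_i$ with $S_i'\cap M_{i-1}=0$. These copies are independent, so their sum is direct.

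For the one application in the paper, Lemma \ref{L:n5}(ii), the weaker form already suffices. There the projective composition factors $M_{31^2}$ and $M_{2^21}$ of $\Rad(\Proj_{5})$ are non-isomorphic and each occurs once.
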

\begin{proof} Let $N_1\subseteq N_2\subseteq M$ be submodules of $M$ such that $N_2/N_1\cong S$  where $S$ is simple and projective. Then $N_2=N_1\oplus S'$ where $S'\cong S$ is a submodule of $N_2$. So $S'$ belongs in the socle of $M$. 
\end{proof}

\begin{lemma}\label{L:n5}\
\begin{enumerate}[(i)]
\item Suppose that $p\geq 7$. The descent algebra $\Des_F(\B_5)$ is a path algebra isomorphic to $F Q$ where $Q$ is the Ext-quiver of $\Des_F(\B_5)$ given below: 
\[\begin{tikzcd}[row sep=1.2em, column sep=1.2em]
&32&&&&2^2&1^4\\
21&\varnothing\arrow[u]\arrow[l]\arrow[d]\arrow[r]& 41\arrow[r]&21^3&1^2\arrow[l]&3&1^3\\
&31&&&&1^5\\
2\arrow[r]&2^21&5\arrow[l]\arrow[r]& 31^2&1\arrow[l]\arrow[r]&21^2&4\arrow[l]
\end{tikzcd}
\]
\item Suppose that $p=5$. The Ext-quiver $Q$ of $\Des_F(\B_5)$ is given below:
\[\begin{tikzcd}[row sep=1.2em, column sep=1.2em]
&32&&&&2^2&1^4\\
21&\varnothing\arrow[u]\arrow[l]\arrow[d]\arrow[r]& 41\arrow[r]&21^3&1^2\arrow[l]&3&1^3\\
&31&\\
2\arrow[r]&2^21&5\arrow[l,swap,"\beta"]\arrow[r,"\alpha"] \arrow[out=75,in=105,loop,distance=20pt,swap,"\varepsilon"]& 31^2&1\arrow[l]\arrow[r]&21^2&4\arrow[l]
\end{tikzcd}
\] Furthermore, $\Des_F(\B_5)\cong F Q/(\varepsilon^2,\alpha\varepsilon,\beta\varepsilon)$.
\end{enumerate}
\end{lemma}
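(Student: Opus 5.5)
Part (i) follows the pattern of Lemma \ref{L:p3n4iso}(i). Since $p\geq 7$ we have $p\nmid |\B_5|=2^5\cdot 5!$, so Theorem \ref{T:SaliolaB} applies. We run through the three arrow types for $\lambda,\mu\in\P(\leq 5)$ with $|\mu|-|\lambda|$ equal to $0$ or $2$.

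\begin{itemize}
\item Type $\lambda\approx\delta\cont(a+b+c)$ with $a>b>c\geq 1$: this needs $a+b+c\geq 6$, so it never occurs.
\item Type $\delta\cont(a+2b)\to\delta\cont(a,b,b)$ with $a\neq b$: this gives exactly $(3)\to(1^3)$? No, since $a=1$, $b=1$ is excluded. The admissible cases are $(4)\to(2,1,1)$, $(5)\to(3,1,1)$ and $(5)\to(1,2,2)=(2^2,1)$. Adding $\delta$ would push the size past $5$, so these are all.
\item Type $\lambda\to\lambda\cont(a,b)$ with $a>b$: for $\lambda=\varnothing$ this gives arrows to $(2,1),(3,1),(4,1),(3,2)$. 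For $\lambda=(1)$ it gives $(2,1,1)$ and $(3,1,1)$. For $\lambda=(2)$ it gives $(2,2,1)$, and for $\lambda=(1^2)$ it gives $(2,1^3)$.
\end{itemize}

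This matches the displayed quiver. The quiver has no oriented cycles and every path has length at most one. Hence $\dim FQ$ is the number of vertices, $|\P(\leq 5)|=1+1+2+3+5+7=19$, plus the number of arrows, $13$, giving $32=2^5=\dim\Des_F(\B_5)$. Theorem \ref{T:Gabriel} gives a surjection $FQ\to\Des_F(\B_5)$, and it is an isomorphism by dimension.

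For part (ii), $p=5$ and $\P_5(\leq 5)$ omits exactly $(1^5)$. The plan is as follows.

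\emph{Step 1: the vertices.} Compute the decomposition matrix $D$. The column for $(5)$ should absorb $(1^5)$, because $\varphi_L(c_{(1^5)})\equiv\varphi_L(c_{(5)})$ mod $5$. Every other partition is its own class. This can be checked by noting that the diagonal entries $\beta_{JJ}$ are nonzero mod $5$ except at $(1^5)$, together with the lower triangularity of $M^c(W)$.

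\emph{Step 2: the Cartan matrix.} Read off the Cartan matrix $C$ of $\Des_\Q(\B_5)$ from part (i): it is the identity plus the arrow-adjacency matrix. Then form $\wt C=D^\top CD$ by Theorem \ref{T: APW}. All rows and columns are unchanged except the one for $(5)$. In $C$, the vertex $(1^5)$ is isolated. So $\Proj(M_5)$ has composition factors $M_5$ twice, $M_{31^2}$ once and $M_{2^21}$ once, and no other projective changes.

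\emph{Step 3: the arrows.} By Corollary \ref{C:subquiver} and Lemma \ref{L:p3n4iso}(i), together with Lemma \ref{L:Extmodp} applied to the $\Z$-forms of the simple modules, every arrow of the characteristic-$\infty$ quiver survives. This requires care for the arrows out of $(5)$, since they relate to the merged vertex; they survive because they are present in the $\Z_{(5)}$-lattice version. Since $\Proj(M_5)$ has dimension $4$, the only remaining question is whether the second copy of $M_5$ lies in the second radical layer, which would give a loop $\varepsilon$.

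\emph{Step 4: the loop (the main obstacle).} I would argue directly with the basis element $x_J$ for $q_J=(1^5)$, that is, $J=\{s_0\}$ up to conjugacy, or alternatively $x_\emptyset$-type elements. Write $e$ for the primitive idempotent at $(5)$. The socle-type element corresponding to $(1^5)$ spans a one-dimensional ideal $e\Rad(A)e$. The aim is to show it is not in $\Rad^2(A)$ by an explicit product computation. Each of $M_{31^2}$ and $M_{2^21}$ is a sink in $Q$ with simple projective cover, so by Proposition \ref{P:projsimple} these factors lie in the socle of $\Proj(M_5)$. Consequently no path through them can return to $(5)$. Hence $e\Rad^2(A)e$ can only come from products $e\Rad(A)e\cdot e\Rad(A)e$. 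If the extra $M_5$ were in $\Rad^2$, then $e\Rad(A)e\subseteq\Rad^2$, and Nakayama's lemma on $eAe$ would force $e\Rad(A)e=0$, a contradiction. So $\varepsilon$ exists.

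With $\varepsilon$ in place, the relations $\varepsilon^2=0$, $\alpha\varepsilon=0$ and $\beta\varepsilon=0$ follow from the Loewy structure
\[\Proj(M_5)=\begin{bmatrix} M_5\\ M_5\oplus M_{31^2}\oplus M_{2^21}\end{bmatrix}.\]
A dimension count of $FQ/(\varepsilon^2,\alpha\varepsilon,\beta\varepsilon)$ gives $18+14=32$, which yields the isomorphism.
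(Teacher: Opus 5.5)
Your argument for the loop $\varepsilon$ is correct, but part (i) contains a counting error, and that error hides a genuine gap in part (ii).

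\textbf{Part (i).} Your enumeration of Saliola's second rule is incomplete. Taking $\delta=(1)$, $a=2$, $b=1$ gives the arrow $(4,1)\to(2,1^3)$, so ``adding $\delta$ would push the size past $5$'' is false. Your list contains $11$ arrows, while the quiver has $12$ (not $13$). It also \emph{does} contain a path of length two, $\gamma\colon\varnothing\to 41\to 21^3$. The correct count is $19+12+1=32$; your total comes out right only because two errors cancel. For the same reason, the Cartan matrix $C$ in your Step 2 is missing the entry contributed by $\gamma$.

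\textbf{Part (ii): the loop.} Your argument is a valid variant of the paper's. The paper uses Proposition~\ref{P:projsimple} to find $T\cong M_{31^2}\oplus M_{2^21}$ inside $\Rad(\Proj(M_5))$ with simple quotient $M_5$. You instead note that $e\Rad(A)f=0$ for the idempotents $f$ at these two simple projectives, so $e\Rad^2(A)e=(e\Rad(A)e)^2\subsetneq e\Rad(A)e$ in the local algebra $eAe$. Two small corrections: $q_J=(1^5)$ corresponds to $J=\emptyset$, not $\{s_0\}$. And the reason $(1^5)$ merges with $(5)$ is that $c_{(5)}$ has order $5$, so $\varphi_L(1)\equiv\varphi_L(c_{(5)})\pmod 5$; triangularity alone does not tell you which column absorbs it.

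\textbf{Part (ii): the gap.} You claim that, once the characteristic-$\infty$ arrows survive, the only open question is the loop at $(5)$. That is not true at $\varnothing$.
\begin{itemize}
\item $\Proj(M_\varnothing)$ has composition factors $M_\varnothing, M_{21}, M_{31}, M_{41}, M_{32}, M_{21^3}$.
\item Lemma~\ref{L:Extmodp} only places $M_{21},M_{31},M_{41},M_{32}$ in $\Rad/\Rad^2$.
\item A priori, the product of the arrows $\varnothing\to 41$ and $41\to 21^3$ could vanish modulo $5$; nothing in your argument prevents the relevant structure constant over $\Z_{(5)}$ from being divisible by $5$.
\item In that case $M_{21^3}$ rises into the second Loewy layer, and the Ext-quiver gains an extra arrow $\varnothing\to 21^3$ with $\gamma\in I$.
\end{itemize}
This alternative has the same Cartan matrix $\wt C$ and the same dimension $18+14+0=32$. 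So neither your Cartan computation nor your final dimension count rules it out. The equality $\dim_F FQ/(\varepsilon^2,\alpha\varepsilon,\beta\varepsilon)=32$ gives the isomorphism only once you know the Ext-quiver is exactly $Q$, and that is equivalent to $\gamma\notin I$.

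The paper supplies this step explicitly: it examines $\Proj(M_\varnothing)$ to see that $\gamma\notin I$, and only then counts $18+12+1+1=32$. You need an argument of that kind, for example that the map $\Proj(M_{41})\to\Proj(M_\varnothing)$ representing the arrow stays injective modulo $5$.
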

\begin{proof} (i) The Ext-quiver $Q$ is given by Theorem \ref{T:SaliolaB}. By direct calculation, the total numbers of vertices of $Q$ is $19$, arrows is $12$, paths of length two is 1, and there is no path of length more than two. Since $\dim \Des_F(\B_5)=32=19+12+1$, we have the desired isomorphism by Theorem \ref{T:Gabriel}.

(ii) By Theorem \ref{T:SaliolaB} and Lemma \ref{L:Extmodp}, we obtain all the arrows as in the statement except the loop at $(5)$. At this stage, the total number of vertices and arrows is $30$. We fix the following total order on $\P(\leq 5)$:
\begin{align*}
&(1^5)<( 2, 1^3 )<
( 1^4)<
( 3, 1^2 )<
( 2^2, 1 )<
( 2, 1^2 )<
( 1^3 )<
( 4, 1 )<
( 3, 2 )<\\
&( 3, 1 )<
( 2,^2 )<
( 2, 1 )<
( 1^2 )<
( 5 )<
( 4 )<
( 3 )<
( 2 )<
( 1 )<
\varnothing.
\end{align*} By part (i), since $\Des_\Q(\B_5)\cong \Q Q$, the Cartan matrix of $\Des_\Q(\B_5)$ is easy to write down. Let $D$ be the decomposition matrix. As such, by Theorem \ref{T: APW}, we obtain
\[\wt C=D^\top CD={\scriptsize\begin{bmatrix}
1&0&0&0&0&0&0&0&0&0&0&0&0&0&0&0&0&0\\
0&1&0&0&0&0&0&0&0&0&0&0&0&0&0&0&0&0\\
0&0&1&0&0&0&0&0&0&0&0&0&0&0&0&0&0&0\\
0&0&0&1&0&0&0&0&0&0&0&0&0&0&0&0&0&0\\
0&0&0&0&1&0&0&0&0&0&0&0&0&0&0&0&0&0\\
0&0&0&0&0&1&0&0&0&0&0&0&0&0&0&0&0&0\\
1&0&0&0&0&0&1&0&0&0&0&0&0&0&0&0&0&0\\
0&0&0&0&0&0&0&1&0&0&0&0&0&0&0&0&0&0\\
0&0&0&0&0&0&0&0&1&0&0&0&0&0&0&0&0&0\\
0&0&0&0&0&0&0&0&0&1&0&0&0&0&0&0&0&0\\
0&0&0&0&0&0&0&0&0&0&1&0&0&0&0&0&0&0\\
1&0&0&0&0&0&0&0&0&0&0&1&0&0&0&0&0&0\\
0&0&1&1&0&0&0&0&0&0&0&0&2&0&0&0&0&0\\
0&0&0&0&1&0&0&0&0&0&0&0&0&1&0&0&0&0\\
0&0&0&0&0&0&0&0&0&0&0&0&0&0&1&0&0&0\\
0&0&0&1&0&0&0&0&0&0&0&0&0&0&0&1&0&0\\
0&0&1&0&1&0&0&0&0&0&0&0&0&0&0&0&1&0\\
1&0&0&0&0&0&1&1&1&0&1&0&0&0&0&0&0&1
\end{bmatrix}}\] The matrix $\wt C$ is obtained from $C$ by removing both the first row and column and then changing the entry corresponding to $((5),(5))$ from 1 to 2. By Theorem \ref{T:Gabriel}, $A:=\Des_F(\B_5)\cong F Q/I$ for some admissible ideal $I$ of $F Q$. Let $\Proj_\lambda=\Proj(M_\lambda)$ the projective cover of the simple $A$-module $M_\lambda$. Examining $\Proj_{\varnothing}$, we see that the path $\gamma:\varnothing\to 41\to 21^3$ of length 2 does not belong in $I$. If we can show that there is a loop at $(5)$, then the total number of vertices, arrows and loops, and together with the path $\gamma$ is 32, which is the dimension of $A$. If so, the proof is complete.

The Cartan matrix $\wt C$ shows that  $W:=\Rad(\Proj_{5})$ has composition factors $M_{5}$, $M_{31^2}$ and $M_{2^21}$ where both $M_{31^2}$ and $M_{2^21}$ are projective. By Proposition \ref{P:projsimple}, $M$ has a submodule $T\cong M_{31^2}\oplus M_{2^21}$ and hence $W/T \cong M_{5}$. Thus $M_{5}$ belongs to $W/\Rad(W)$ and hence there is a loop at the vertex $(5)$.
\end{proof}

\begin{lemma}\label{L:n6} Suppose that $p\geq 5$. The Ext-quiver of $\Des_F(\B_6)$ contains the following subquiver:
\[\begin{tikzcd}
&&\varnothing\arrow[dll]\arrow[dl]\arrow[d]\arrow[dr]\arrow[dr]\arrow[drr]\arrow[drrr]\\
5&21&31&32&41&42
\end{tikzcd}
\]
\end{lemma}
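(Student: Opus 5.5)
The plan is to obtain each of the six arrows out of $\varnothing$ from one of three sources: Corollary \ref{C:subquiver}, Theorem \ref{T:SaliolaB} combined with Lemma \ref{L:Extmodp}, or a direct Cartan-matrix argument of the kind used in Lemma \ref{L:n5}(ii). Throughout, all six targets $5,21,31,32,41,42$ are $p$-regular for $p\geq 5$, so they are genuine vertices of the Ext-quiver of $\Des_F(\B_6)$ by Proposition \ref{P:BSimple}.

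\textbf{Step 1: arrows inherited from $\B_5$.} First I would check how vertex labels behave under the surjection $\beta:\Des_F(\B_6)\to\Des_F(\B_5)$ of Theorem \ref{T:surjB}. The map $\beta$ kills every $x_K$ with $0\notin K$. For $0\in K$, writing $K=\{0\}\cup(K'+1)$, the positive cycles of $c_K$ outside the type-$\B$ component are exactly the shifted positive cycles of $c_{K'}$. Hence the inflation of $M_\lambda$ along $\beta$ should again be $M_\lambda$, with the same partition label. I would verify this by comparing $\lambda_K(\beta(x_J))$ with the columns of the table of marks. Once the labels are known to be preserved, Lemma \ref{L:surjExt} shows that the arrows $\varnothing\to 21,31,32,41$ in the quiver of Lemma \ref{L:n5} (valid for all $p\geq 5$) survive in $\B_6$.

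\textbf{Step 2: arrows from Saliola and reduction mod $p$.} Next I would read off the arrows out of $\varnothing$ in characteristic $\infty$ from Theorem \ref{T:SaliolaB} with $n=6$. Since $\lambda=\varnothing$, the only applicable clause is $\mu\approx(a,b)$ with $a>b\geq 1$ and $a+b\leq 6$. This gives arrows $\varnothing\to 21,31,41,51,32,42$ over $\Des_\Q(\B_6)$. To pass to characteristic $p$, I would apply Lemma \ref{L:Extmodp} with $B=\Des_{\Z_{(p)}}(\B_6)$ and the rank-one lattices $M_{K,\Z}$. This is valid because $M_{K,\Z}\otimes F\cong M_{K,F}$ whenever the label of $K$ is $p$-regular. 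It yields $\Ext^1\neq 0$ in characteristic $p$ for $\varnothing\to 42$, and it re-proves the arrows obtained in Step 1. For $p\geq 7$ the quiver is literally Saliola's, so all of these arrows hold directly.

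\textbf{Step 3: the arrow $\varnothing\to 5$ (main obstacle).} This is the step I expect to be hardest, since $(5)$ is not of the form $(a,b)$ with $a>b\geq1$, so it does not arise from the computation in Step 2. First I would recheck Saliola's convention for labels in type $\B$, in particular whether the empty part $\delta$ or the type-$\B$ block can make $(5)$ appear as a target from $\varnothing$ in some clause. If that fails, I would argue as in Lemma \ref{L:n5}(ii). The key input is the last row of the Cartan matrix $\wt C=D^\top CD$ for $\B_6$, obtained from Theorem \ref{T: APW}: it determines the composition factors of $\Rad(\Proj(M_\varnothing))$. If $M_5$ occurs there, I would use Proposition \ref{P:projsimple}, together with knowledge of which neighbouring simples are projective, to strip off the socle part. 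That would force $M_5$ into the top of $\Rad(\Proj(M_\varnothing))$, which is exactly an arrow $\varnothing\to 5$. As a last resort for the remaining characteristics, I would compute $e_5\Rad(A)e_\varnothing$ modulo $\Rad^2$ with the idempotents of \cite{Benson/Lim:2025a}, using Corollary \ref{C:BensonLimExtQuiver} to reduce to finitely many primes.
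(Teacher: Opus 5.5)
Your Step 2 is exactly the paper's proof. The paper reads the arrows out of $\varnothing$ off Theorem \ref{T:SaliolaB} for $\Des_\Q(\B_6)$ and transfers them to characteristic $p\geq 5$ with Lemma \ref{L:Extmodp}, since all targets are $p$-regular. Your Step 1 is correct but redundant. The genuine problem is Step 3: it tries to establish an arrow $\varnothing\to(5)$, and that arrow does not exist. The vertex printed as ``$5$'' in the statement must be a misprint for $51=(5,1)$. That is precisely the sixth target your own Step 2 produced, and the paper's one-line proof via Saliola only makes sense with that reading.

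You already had the evidence for this. For $p\geq 7$ we have $p\nmid|\B_6|=2^{10}\cdot 3^2\cdot 5$, so Theorem \ref{T:SaliolaB} describes the \emph{whole} Ext-quiver. The only clause applicable to $\lambda=\varnothing$ produces two-part targets $(a,b)$ with $a>b$, never $(5)$.

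For $p=5$, the Cartan-matrix route you suggest refutes the arrow rather than proving it:
\begin{itemize}
\item The only labels that are not $5$-regular are $(1^5)$ and $(1^6)$. Their modules reduce to $M_5$ and $M_{51}$ respectively, and nothing other than $\varnothing$ reduces to $M_\varnothing$.
\item In Saliola's quiver no arrow enters $\varnothing$.
\item The vertices reachable from $\varnothing$ are only $21,31,41,51,32,42,2^21^2,21^3,31^3$.
\end{itemize}
Hence, by Theorem \ref{T: APW}, the Cartan entries of $\Des_F(\B_6)$ linking $\varnothing$ and $(5)$ are $0$. So $\Ext^1(M_\varnothing,M_5)=0$, and your hypothesis ``if $M_5$ occurs there'' fails; the Magma fallback would also return zero.

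So rather than treating $\varnothing\to 5$ as the main obstacle, you should have concluded that the literal statement is false and that $51$ is meant. With that correction, Step 2 alone is a complete proof.
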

\begin{proof} The Ext-quiver $Q$ of $\Des_\Q(\B_6)$ contains such subquiver in the statement by Theorem \ref{T:SaliolaB}. We now obtain the $p>0$ case using Lemma \ref{L:Extmodp}.
\end{proof}

\subsection{The Ext-quivers for Type $\D$} We now study the Ext-quiver of the descent algebra of type $\D$ over fields of characteristic $2$.  

\begin{lemma}\label{L:Qp2neven} Suppose that $p=2$ and $n$ is even. The Ext-quiver of $\Des_F(\D_n)$ is a single vertex with at least $n$ loops.
\end{lemma}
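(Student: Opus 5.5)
The plan is to imitate the proof of Lemma \ref{L:p2ExtQ}, now working with the basis $\{x_J : J\subseteq T\}$ of $A=\Des_F(\D_n)$.

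First I identify the simple modules and the radical. Since $p=2$ and $n$ is even, $\Gamma_2(n)=\{\varnothing\}$, so $A$ has a unique simple module $M_\varnothing$, corresponding to $J=T$. By Lemma \ref{L: typeD Normaliser}, $\beta_{JJ}=[\N_W(W_J):W_J]$ equals $1$ for $J=T$. For $J\neq T$, the composition $\mu_J$ has $m\geq 1$ parts, giving a factor $2^{m}$. The factor $a=\frac12$ occurs only when $|\mu_J|=n$ and some part is odd. Since $n$ is even, the number of odd parts is then even and nonzero, hence at least $2$, so $m\ge 2$ and the index is still even. By Theorem \ref{T: Sol Radical}, $\Rad(A)$ is therefore spanned by $\{x_J: J\neq T\}$. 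Since $\dim A=2^n$, $\Rad(A)$ has codimension one.

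Next, for $i\in[0,n]$ set $X(i)=\Span\{x_J : |J|\le n-i\}$. I claim $\Rad^m(A)\subseteq X(m)$, by induction on $m$; the cases $m=0,1$ hold by the first step. For the inductive step, take $x_J\in\Rad(A)$ (so $J\neq T$) and $|K|\le n-m$, and expand $x_Jx_K=\sum_L a_{JKL}x_L$ using \eqref{Eq:DesM}. Every $L$ appearing has the form $w^{-1}Jw\cap K\subseteq K$. So $|L|\le |K|$, with equality only if $L=K$, and the coefficient of $x_K$ is $a_{JKK}=\beta_{JK}$. By Lemma \ref{L:beta}, $\beta_{JJ}$ divides $\beta_{JK}$, and $\beta_{JJ}$ is even by the first step. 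Hence $a_{JKK}\equiv 0$ mod $2$, so $x_Jx_K\in X(m+1)$ and $\Rad^{m+1}(A)=\Rad(A)\Rad^m(A)\subseteq X(m+1)$.

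Finally, the case $m=2$ gives $\Rad^2(A)\subseteq X(2)$ while $\Rad(A)=X(1)$. Hence
\[\dim_F \Ext^1_A(M_\varnothing,M_\varnothing)=\dim_F\Rad(A)/\Rad^2(A)\ge \dim_F X(1)/X(2)=\#\{J\subseteq T: |J|=n-1\}=n.\]
So the Ext-quiver is a single vertex with at least $n$ loops.

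The main obstacle is the parity check in the first step. I need $[\N_W(W_J):W_J]$ to be even for every $J\neq T$, including the cases with $a=\frac12$, and this is exactly where the hypothesis that $n$ is even is used. For $n$ odd, the composition $(n)$ gives index $1$, which is why $\Gamma_2(n)$ then has two elements. I would double-check this against the all-even and $(\mu,\pm)$ cases, which contribute factors $2^m$ with $m\geq 1$ and $a=1$, so they are even as well.
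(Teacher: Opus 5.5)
Your proposal is correct and is essentially the paper's argument: both show that $\Rad^2(A)$ lies in the span of the $x_L$ with $|L|\le n-2$, using $L\subseteq K$ together with $2\mid\beta_{JJ}\mid a_{JKK}$. The differences are minor. Your induction over all $m$ is more than needed, since only the case $m=2$ is used. Your explicit parity check of the normaliser indices supplies a step the paper takes for granted from $\Gamma_2(n)=\{\varnothing\}$.
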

\begin{proof} Let $A:=\Des_F(\D_n)$ and $T=[0,n-1]$. Since $n$ is even, there is a single simple $A$-module. Furthermore, $\Rad(A)$ has a basis $\{x_J:J\subsetneq T\}$. We claim that $\Rad^2(A)$ is contained  in the subspace $U$ spanned by $x_L$'s where $|L|\leq n-2$. Let $x_J,x_K\in \Rad(A)$, i.e., $J,K\neq T$. By Lemma \ref{L:beta}, we have $2\mid a_{JJJ}\mid a_{JKK}$. Therefore, $x_Jx_K$ must be a linear combination of $x_L$'s such that $L\subsetneq K$. This justifies our claim. As such, $\dim_F\Rad(A)/\Rad^2(A)\geq \dim_F\Rad(A)/U=n$.
\end{proof}

\begin{example} Suppose that $p=2$. The Ext-quiver of $\Des_F(\D_4)$ is
\[\begin{tikzcd}
\varnothing \arrow[out=0,in=30,loop,distance=20pt]\arrow[out=72,in=102,loop,distance=20pt] \arrow[out=144,in=174,loop,distance=20pt]\arrow[out=216,in=246,loop,distance=20pt]
\arrow[out=288,in=318,loop,distance=20pt]
\end{tikzcd}\]
\end{example}

\section{The Representation Type of the Descent Algebra of Type $\B$}\label{S:RepTypeB}

In this section, we clasify the representation type of the descent algebras of type $\B$ as in Theorem \ref{T:RepTypeB} below. Our strategy  involves the surjective algebra homomorphism stated in Theorem \ref{T:surjB}, Lemma \ref{L: surj alg} and examining the algebras $\Des_F(\B_n)$ for some small $n$.

\begin{theorem}\label{T:RepTypeB} Let $n\geq 2$. The algebra $\Des_F(\B_n)$ is
\begin{enumerate}[(i)]
\item finite type if $p\geq 3$ and $n\leq 4$,
\item tame if $p=2=n$,
\item wild otherwise.
\end{enumerate}
\end{theorem}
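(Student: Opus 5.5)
Since Corollary \ref{C:wildrep} shows that wildness (and infinite type) propagates from $\B_n$ to $\B_{n+1}$, the plan is to settle the small cases directly and then induct upward. The argument splits by characteristic: $p\geq 5$, $p=3$, and $p=2$.

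\emph{Finite type cases.} For $p\geq 3$ and $n\leq 4$ it suffices, by Lemma \ref{L: surj alg} applied to the surjections of Theorem \ref{T:surjB}, to show that $\Des_F(\B_4)$ has finite type; its quotients $\Des_F(\B_3)$ and $\Des_F(\B_2)$ then also have finite type.
\begin{itemize}
\item For $p\geq 5$, Lemma \ref{L:p3n4iso}(i) gives $\Des_F(\B_4)\cong FQ$. The underlying graph of $Q$ is a disjoint union of type $\A$ diagrams (paths with at most three vertices, plus isolated vertices), so Theorem \ref{T: Gab} applies.
\item For $p=3$, Lemma \ref{L:p3n4iso}(ii) describes $\Des_F(\B_4)$ as $FQ/(\varepsilon^2,\delta^2,\varepsilon\alpha)$. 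I would handle each block separately. The block at $3$ is $F[\delta]/(\delta^2)$, which has finite type. The block on $\{31,\varnothing,21\}$ has projective covers of Loewy length at most $2$, since $\varepsilon\alpha=0$ and $\varepsilon^2=0$, so it is radical square zero. Its separated quiver has edges $31\to 31'$, $\varnothing\to 31'$ and $\varnothing\to 21'$, which form a path of type $\A$ together with isolated vertices. Theorem \ref{T:separatedquiver}(i) then gives finite type, and the remaining blocks are hereditary of type $\A$.
\end{itemize}

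\emph{The tame case $p=2=n$.} Lemma \ref{L:Q22}(i) gives $\Des_F(\B_2)\cong FQ/(\alpha^2,\beta^2-\alpha\beta,\alpha\beta-\beta\alpha)$, a local algebra of dimension $4$ with $\soc=\Rad^2$ one-dimensional. I would check that it is self-injective, being symmetric or Frobenius, since its socle is simple. Lemma \ref{L:projinj} then reduces it to $A/\soc(A)$, the radical-square-zero local algebra with two loops. The separated quiver of two loops is the Kronecker quiver $\widetilde{\A}_1$, so by Theorem \ref{T:separatedquiver}(ii) this algebra is tame. Hence $\Des_F(\B_2)$ is tame. This self-injectivity check is the main place where care is needed.

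\emph{Wild cases.}
\begin{itemize}
\item For $p=2$ and $n\geq 3$: Lemma \ref{L:p2ExtQ} gives at least three loops at a single vertex. The separated quiver has two vertices joined by at least three arrows, which is neither Dynkin nor extended Dynkin, so Corollary \ref{C:wildquotient} applies.
\item For $p=3$: it suffices to show $\Des_F(\B_5)$ is wild. Lemma \ref{L:Extmodp} together with the $\Q$-quiver of Lemma \ref{L:n5}(i) gives at least the four arrows out of $\varnothing$. In characteristic $3$ I expect extra loops, for instance at $31$, because Corollary \ref{C:subquiver} embeds the quiver of Lemma \ref{L:p3n4iso}(ii). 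The vertex $\varnothing$ in the separated quiver then gets neighbours $21',31',32',41'$, and $31'$ is also joined to $31$. This gives a tree properly containing $\widetilde{\D}_4$, which is not Dynkin or extended Dynkin, so Corollary \ref{C:wildquotient} applies. The point to check is that the vertices $21,31,32,41$ remain distinct simples for $p=3$; they are $3$-regular, so they do.
\item For $p\geq 5$: Lemma \ref{L:n6} shows that $\varnothing$ has six outgoing arrows to distinct vertices in $\Des_F(\B_6)$. The separated quiver therefore contains a star with six edges, which is wild by Corollary \ref{C:wildquotient}.
\item It remains to treat $n=5$ with $p\geq 5$. For $p\geq 7$, $\Des_F(\B_5)\cong FQ$ with $\varnothing$ of valency $5$ (neighbours $21,32,31,41$) and the further edge $41\to 21^3\leftarrow 1^2$. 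The underlying graph is then the star on $\varnothing,21,31,32,41$ extended along $41$ by $21^3$ and $1^2$, a tree that is not extended Dynkin. By Theorems \ref{T: Gab} and \ref{T:tame} it is neither finite nor tame, hence wild by Theorem \ref{T:trichotomy}.
\item For $p=5$, $\Des_F(\B_5)$ surjects onto the hereditary algebra obtained by factoring out the block containing the loop at $5$; equivalently, the block of $\varnothing$ is exactly the same path algebra. So this block is wild by the same argument.
\end{itemize}
Finally, the induction via Corollary \ref{C:wildrep} finishes all larger $n$.
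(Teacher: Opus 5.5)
Your proof is correct and follows essentially the paper's route. You settle $\B_2$ ($p=2$), $\B_4$ ($p\geq 3$) and $\B_5$ ($p\geq 3$) directly and propagate with Corollary~\ref{C:wildrep}. The only substantive deviation is at $p=3$, $n=5$: you enlarge the $\widetilde{\D}_4$ at $\varnothing$ using the loop at $31$ inherited from $\Des_F(\B_4)$ via Corollary~\ref{C:subquiver}, whereas the paper uses $M_{32,\Z}\otimes_\Z F\cong M_{21^3,\Z}\otimes_\Z F$ to produce the extra arrow $41\to 32$. Your step tacitly uses that inflation along $\beta$ preserves partition labels; this is true but deserves a line, or you can get the loop directly from $M_{1^4,\Z}$ reducing to $M_{31}$ mod $3$.

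Two small slips: $\varnothing$ has valency $4$, not $5$, in Lemma~\ref{L:n5}(i). Also, the appeal to Lemma~\ref{L:n6} is redundant once $n=5$ is handled.
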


To prove Theorem \ref{T:RepTypeB}, we require the series of lemmas below. Basically, for each $p$, we need to find integers $n_0\leq n_1$ such that $\Des_F(\B_n)$ has respectively finite, tame, and wild representation types  when $n=n_0-1$, $n\in\{n_0,n_1-1\}$, and $n=n_1$.

\begin{lemma}\label{L:p2n2rep} Suppose that $p=2$. The algebra $\Des_F(\B_2)$ is tame.
\end{lemma}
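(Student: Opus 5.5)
The plan is to reduce to a radical-square-zero algebra and then apply Theorem \ref{T:separatedquiver}. By Lemma \ref{L:Q22}(i), with $x=x_{\{0\}}$, $y=x_{\{1\}}$, $z=x_\emptyset$, the algebra $A=\Des_F(\B_2)$ is local and $4$-dimensional with basis $\{1,x,y,z\}$. The relations are $x^2=y^2=z$ and $xy=yx=0$. Moreover $z$ annihilates $\Rad(A)=\Span\{x,y,z\}$, since $xz=x^3=x\cdot y^2=0$ and similarly $yz=0$, $z^2=0$. So $\Rad^2(A)=Fz$ and $\Rad^3(A)=0$, matching the radical layers \eqref{Eq:Q22}.

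First I would show that $A$ is self-injective, so that Lemma \ref{L:projinj} applies to $P=A=\Proj(M)$. Since $A$ is commutative and local, it suffices to show that its socle is simple. Take $a x+b y+c z$ annihilated by $\Rad(A)$. Multiplying by $x$ gives $a z=0$ and multiplying by $y$ gives $b z=0$, so $\soc(A)=Fz$ is one-dimensional. A finite-dimensional commutative local algebra with simple socle is Frobenius; explicitly, the linear form picking out the $z$-coefficient is a nondegenerate associative form. Hence the regular module $A$ is projective and injective.

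By Lemma \ref{L:projinj}, $A$ then has the same representation type as $B=A/\soc(A)=A/Fz$. The algebra $B$ has basis $\{1,\bar x,\bar y\}$ with $\Rad^2(B)=0$. Its Ext-quiver is the same as that of $A$: one vertex with two loops. The separated quiver therefore has two vertices $1,1'$ joined by two arrows $1\to 1'$, namely the Kronecker quiver. Its underlying graph is the extended Dynkin diagram $\widetilde{\A}_1$. By Theorem \ref{T:separatedquiver}(ii), $B$ is tame, and hence $A$ is tame.

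The only step needing care is the self-injectivity, that is, checking that the socle is exactly $Fz$. This is immediate from the multiplication rules in Lemma \ref{L:Q22}. The rest is a direct application of Lemma \ref{L:projinj} and Theorem \ref{T:separatedquiver}.
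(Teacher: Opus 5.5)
Your proof is correct and follows the same route as the paper: $A$ is projective-injective, you pass to $A/\soc(A)$ via Lemma~\ref{L:projinj}, and you conclude with the Kronecker separated quiver and Theorem~\ref{T:separatedquiver}(ii). Your explicit check that $\soc(A)=Fz$, so that the commutative local algebra $A$ is Frobenius, supplies the justification for $\mathsf{I}(M)\cong\Proj(M)$, which the paper simply reads off from the Loewy structure \eqref{Eq:Q22}.
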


\begin{proof} Let $A=\Des_F(\B_2)$. By Lemma \ref{L:Q22}(i), let $M$ be the unique simple $A$-module. Then $\mathsf{I}(M)\cong \Proj(M)$ (see Equation (\ref{Eq:Q22})). By Lemma \ref{L:projinj}, $B:=A/M$ has the same representation type as $A$. Notice that $\Rad^2(B)=0$ and the Ext-quiver $Q$ of $B$ is identical to that of $A$, that is, by Lemma \ref{L:Q22}(i), a single vertex with exactly two loops. Since the separated quiver of $Q$ is the Kronecker quiver \[\begin{tikzcd}
\sb\arrow[r,shift right=.7ex]\arrow[r,shift left=.7ex]&\sb\end{tikzcd}\] by Theorem \ref{T:separatedquiver}, $B$ is tame. Therefore, $A$ is tame.
\end{proof}

\begin{lemma}\label{L:p2n3rep} Suppose that $p=2$ and $n\geq 3$. The algebra $\Des_F(\B_n)$ is wild.
\end{lemma}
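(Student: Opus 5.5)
By Corollary \ref{C:wildrep}, it suffices to prove that $\Des_F(\B_3)$ is wild when $p=2$. Wildness then propagates to every $n\geq 3$ through the surjections $\Des_F(\B_{n+1})\to\Des_F(\B_n)$ of Theorem \ref{T:surjB}. The case $n=3$ is settled by Corollary \ref{C:wildquotient}, so the plan reduces to identifying the separated quiver of $\Des_F(\B_3)$.

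By Lemma \ref{L:Q22}(ii) (or already by Lemma \ref{L:p2ExtQ}), the Ext-quiver $Q$ of $A=\Des_F(\B_3)$ is a single vertex carrying at least three loops. Its separated quiver therefore has two vertices $v$ and $v'$, with one arrow $v\to v'$ for each loop of $Q$. This gives a generalised Kronecker quiver with at least three parallel arrows.

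Such a quiver is neither a Dynkin diagram nor an extended Dynkin diagram: the only extended Dynkin graph on two vertices is $\widetilde{\A}_1$, which has exactly two edges. Corollary \ref{C:wildquotient} then shows that $A$ is wild.

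For completeness I would check directly that $A/\Rad^2(A)$ is wild. It is $2$-nilpotent with Ext-quiver $Q$, so by the stable equivalence underlying Theorem \ref{T:separatedquiver} it is stably equivalent to the path algebra of the $m$-Kronecker quiver with $m\geq 3$. That path algebra is wild by Theorems \ref{T: Gab} and \ref{T:tame} together with Theorem \ref{T:trichotomy}. Lemma \ref{L: surj alg} then transfers wildness from $A/\Rad^2(A)$ to $A$.

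There is no real obstacle. The only input needed is the lower bound of three loops at the unique vertex, and Lemma \ref{L:p2ExtQ} already supplies it through its radical filtration argument; the exact number of loops is irrelevant. The remaining step is the induction on $n$ via Corollary \ref{C:wildrep}, starting from $n=3$.
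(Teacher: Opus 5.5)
Your proposal is correct and is essentially the paper's proof: reduce to $n=3$ via Corollary \ref{C:wildrep}, then apply Corollary \ref{C:wildquotient} to the one-vertex Ext-quiver with (at least) three loops from Lemma \ref{L:Q22}(ii), whose separated quiver is a generalised Kronecker quiver with at least three arrows. Your extra ``for completeness'' paragraph only unpacks the proof of Corollary \ref{C:wildquotient}, so it is not needed.
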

\begin{proof} By Corollary \ref{C:wildrep}, it suffices to prove for the case where $n=3$. By Lemma \ref{L:Q22}(ii), the algebra $\Des_\F(\B_3)$ satisfies Corollary \ref{C:wildquotient} and hence it is wild. 
\end{proof}

\begin{lemma}\label{L:p3n4rep} Suppose that $p\geq 3$ and $n\leq 4$. The algebra $\Des_F(\B_n)$ has finite type.
\end{lemma}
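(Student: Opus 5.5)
By Corollary \ref{C:wildrep} in contrapositive form (equivalently, Theorem \ref{T:surjB} together with Lemma \ref{L: surj alg}), finite type for $\Des_F(\B_4)$ implies finite type for $\Des_F(\B_n)$ for all $2\leq n\leq 4$. Indeed, each $\Des_F(\B_n)$ is a quotient of $\Des_F(\B_{n+1})$, and a quotient of a finite-type algebra is finite type, since its indecomposable modules inflate to pairwise non-isomorphic indecomposable modules. So it suffices to treat $n=4$, splitting into the cases $p\geq 5$ and $p=3$ as in Lemma \ref{L:p3n4iso}.

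For $p\geq 5$, Lemma \ref{L:p3n4iso}(i) gives $\Des_F(\B_4)\cong FQ$ for the displayed quiver $Q$. Its underlying graph has the following components:
\begin{itemize}
\item the path $31-\varnothing-21$, of type $\A_3$;
\item the path $4-211-1$, of type $\A_3$;
\item six isolated vertices.
\end{itemize}
All of these are Dynkin, so Theorem \ref{T: Gab} gives finite type.

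For $p=3$, Lemma \ref{L:p3n4iso}(ii) gives $A:=\Des_F(\B_4)\cong FQ/(\varepsilon^2,\delta^2,\varepsilon\alpha)$. The algebra decomposes into blocks. Most blocks are hereditary of Dynkin type: the component $4\to 211\leftarrow 1$ of type $\A_3$, and the isolated vertices $2^2,2,1^2$. The block at vertex $3$ is $F[\delta]/(\delta^2)$, a local Nakayama algebra, hence of finite type. The remaining block $B$ is supported on $31,\varnothing,21$, with arrows $\alpha:\varnothing\to 31$, $\varnothing\to 21$ and the loop $\varepsilon$ at $31$, subject to $\varepsilon^2=0$ and $\varepsilon\alpha=0$. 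In this block the only paths of positive length are the three arrows, so $\Rad^2(B)=0$. The separated quiver of $B$ has the edges
\[
31-31',\qquad \varnothing-31',\qquad \varnothing-21'.
\]
These form the path $31-31'-\varnothing-21'$, of type $\A_4$; the vertices $\varnothing'$ and $21$ are isolated. All components are Dynkin, so Theorem \ref{T:separatedquiver}(i) gives that $B$ has finite type. Since $A$ is the direct product of these finite-type blocks, $A$ has finite type.

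The main point needing care is the $p=3$ case. There one must check that the relations really kill every path of length at least $2$ in the block containing $\varepsilon$, including $\varepsilon^2$ and $\varepsilon\alpha$, so that the radical-square-zero criterion applies. One must also compute the separated quiver correctly, remembering that the loop $\varepsilon$ contributes the edge $31-31'$.
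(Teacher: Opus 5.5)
Your proposal is correct and follows essentially the paper's route: reduce to $n=4$ via the surjections of Theorem \ref{T:surjB}, then read off finite type from the presentations in Lemma \ref{L:p3n4iso}. The only cosmetic difference is in how the last step is organised. The paper applies the radical-square-zero separated-quiver criterion (Theorem \ref{T:separatedquiver}(i)) to the whole algebra for both $p=3$ and $p\geq 5$. You instead use Gabriel's theorem for $p\geq 5$ and split into blocks for $p=3$; both are valid.
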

\begin{proof} By Corollary \ref{C:wildrep}, it suffices to prove for the case where $n=4$. For this case, we make use of Lemma \ref{L:p3n4iso}. From the lemma, regardless of whether  $p=3$ or $p\geq 5$, we have $\Rad^2(\Des_F(\B_4))=0$ and its separated quiver is a disjoint union Dynkin diagrams (of type $\A$). As such, $\Des_F(\B_4)$ has finite type by Theorem \ref{T:separatedquiver}(i) for all odd primes $p$.
\end{proof}

\begin{lemma}\label{L:p>2n5repB} Let $p\geq 3$ and $n\geq 5$. Then $\Des_F(\B_n)$ is wild.
\end{lemma}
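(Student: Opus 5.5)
For $n\geq 5$, Corollary \ref{C:wildrep} makes $\Des_F(\B_{n+1})$ wild whenever $\Des_F(\B_n)$ is. So it suffices to prove that $\Des_F(\B_5)$ is wild for every $p\geq 3$. I will split into the cases $p\geq 7$, $p=5$ and $p=3$. In each case I exhibit a quotient of $\Des_F(\B_5)$ whose quiver is neither Dynkin nor extended Dynkin, and then apply Lemma \ref{L: surj alg}.

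\emph{Case $p\geq 7$.} By Lemma \ref{L:n5}(i), $\Des_F(\B_5)\cong FQ$ is a path algebra. Consider the connected component of $Q$ containing $\varnothing$. Its underlying graph is a tree: $\varnothing$ has four neighbours $21,32,31,41$, and the arm through $41$ continues as $41-21^3-1^2$. A star with four arms, one of which has length $3$, is neither a Dynkin diagram nor an extended Dynkin diagram, since $\widetilde{\D}_4$ has all four arms of length one. Hence by Theorems \ref{T: Gab}, \ref{T:tame} and \ref{T:trichotomy}, the path algebra of this component is wild. This path algebra is a direct factor, hence a quotient, of $FQ$, so $\Des_F(\B_5)$ is wild by Lemma \ref{L: surj alg}.

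\emph{Case $p=5$.} By Lemma \ref{L:n5}(ii), $\Des_F(\B_5)\cong FQ/(\varepsilon^2,\alpha\varepsilon,\beta\varepsilon)$. The relations involve only arrows in the component containing $(5)$. Therefore the component containing $\varnothing$, which is the same tree as in the previous case, splits off as a block that is isomorphic to its path algebra. So it is wild, and we conclude exactly as before.

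\emph{Case $p=3$.} Here I do not know the full Ext-quiver, and the vertex $21^3$ is not $3$-regular, so the tree argument fails. Instead I will use Corollary \ref{C:wildquotient} on a different configuration.
\begin{enumerate}[(a)]
\item First, the partitions $\varnothing,21,31,32,41$ are all $3$-regular. Their simple modules $M_{K,\Z}\otimes F$ are therefore pairwise non-isomorphic simples of $\Des_F(\B_5)$. Theorem \ref{T:SaliolaB} gives arrows $\varnothing\to 21,32,31,41$ over $\Q$, and Lemma \ref{L:Extmodp} transports these arrows to characteristic $3$.
\item Second, Lemma \ref{L:p3n4iso}(ii) gives a loop at $31$ in the Ext-quiver of $\Des_F(\B_4)$. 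By Corollary \ref{C:subquiver} this loop persists in the Ext-quiver of $\Des_F(\B_5)$.
\end{enumerate}
The separated quiver then contains the arrows $\varnothing\to 21',32',31',41'$ together with $31\to 31'$. This is a star with four arms, one of them of length two, which is neither Dynkin nor extended Dynkin. Corollary \ref{C:wildquotient} then gives wildness.

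The main obstacle is step (b). I must check that the embedding of Ext-quivers coming from the inflation along $\beta$ in Theorem \ref{T:surjB} sends the vertex labelled $31$ for $\B_4$ to the vertex labelled $31$ for $\B_5$. I would verify this directly from $\beta(x_K)$: inflating $M_\lambda$ should give the simple module labelled by the same partition $\lambda$. Concretely, compare $\lambda_K(\beta(x_J))$ with the table-of-marks values $\beta_{J'K'}$ in $\B_5$ for the subset $K'=K+1\cup\{0\}$.

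If this identification fails, I would fall back on the weaker route: the vertices $\varnothing,5,21,31,32,41,42$ are all $3$-regular, so Lemma \ref{L:n6} holds verbatim for $p=3$ via Lemma \ref{L:Extmodp}. The resulting six-armed star in the separated quiver is wild, which settles $n\geq 6$. The case $n=5$ would then be left to a direct computation of the loop at $31$ from the Cartan matrix $D^\top CD$, arguing as in the proof of Lemma \ref{L:n5}(ii).
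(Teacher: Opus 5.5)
\textbf{Verdict.} Your proposal is correct. The reduction to $n=5$ and your treatment of $p\geq 7$ and $p=5$ coincide with the paper's: the block containing $\varnothing$ is the path algebra of a four-armed star with one arm of length three.

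\textbf{The case $p=3$.} Here you take a genuinely different route. The paper stays inside $\B_5$. Since $21^3$ is not $3$-regular and $M_{21^3,\Z}\otimes_\Z F\cong M_{32,\Z}\otimes_\Z F$, Lemma~\ref{L:Extmodp} turns the characteristic-zero arrow $41\to 21^3$ into an arrow $41\to 32$. This lengthens the arm through $32'$ in the separated quiver. You instead lengthen the arm through $31'$, using the loop at $31$ imported from $\Des_F(\B_4)$ along $\beta$. The paper's route needs only a mod-$3$ coincidence of two columns of the table of marks. Yours avoids working out where $21^3$ collapses. In exchange it relies on the Cartan-matrix computation behind Lemma~\ref{L:p3n4iso}(ii) and on a labelled form of Corollary~\ref{C:subquiver}.

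\textbf{The step you flag.} It holds. Take $L\subseteq[0,n-1]$ and $L''=\{0\}\cup(L+1)$. Then $q_{L''}=q_L$. Moreover, $\varphi_K(c_{L''})$ equals $\varphi_{(K\setminus\{0\})-1}(c_L)$ when $0\in K$, and equals $0$ otherwise. The reason is that fixed cosets correspond to $c$-stable signed flags, and adjoining $\pm1$ to the symmetric block is a bijection between them. Hence inflation along $\beta$ sends $M_\lambda$ to $M_\lambda$.

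\textbf{You need less than this.} It is enough that $\varnothing$ goes to $\varnothing$. This is immediate: on both modules $x_{[0,4]}$ acts as $1$ and every other $x_K$ acts as $0$. Let $v$ be the image of the $\B_4$-vertex $31$. Then $v$ carries a loop and receives an arrow from $\varnothing$.
\begin{itemize}
\item If $v\in\{21,31,32,41\}$, you get your four-armed star with one long arm.
\item Otherwise $\varnothing$ has five distinct neighbours in the separated quiver.
\end{itemize}
Either way Corollary~\ref{C:wildquotient} applies, so your fallback via Lemma~\ref{L:n6} is unnecessary.
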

\begin{proof} By Corollary \ref{C:wildrep}, it suffices to prove this  for the case where $n=5$. Let $A=\Des_F(\B_5)$. Suppose first that $p=3$. We have $M_{32,\Z}\otimes_\Z F\cong M_{21^3,\Z}\otimes_\Z F$. Using Lemma \ref{L:Extmodp}, Proposition 3.7 and Lemma \ref{L:n5}(i), we conclude that the Ext-quiver $Q$ of $A$ has a subquiver as follows.
\[\begin{tikzcd}[row sep=1.2em, column sep=1.2em]
&32&\\
21&\varnothing\arrow[u]\arrow[l]\arrow[d]\arrow[r]& 41\arrow[ul]\\
&31
\end{tikzcd}\] Therefore, $A$ satisfies Corollary \ref{C:wildquotient} and hence it is wild.

Next, suppose that $p=5$.  The Ext-quiver $Q$ of $A$ is  given in Lemma \ref{L:n5}(ii). Let $B$ be the block of $A$ corresponding  to the component $Q'$ of the $Q$ which contains the simple module labelled by $\varnothing$. The presentation shows that 
we have $\dim B=14$ and $B\cong F Q'$. By Theorems \ref{T: Gab} and \ref{T:tame}, $B$ is wild. Therefore, $A$ is wild.

Finally, suppose that $p\geq 7$. By Theorem \ref{T:SaliolaB} and Lemma \ref{L:n5}(i), $A\cong F Q$ where $Q$ is the Ext-quiver of $A$.  The direct summand $FQ'$ of $FQ$ is wild by Theorems \ref{T: Gab} and \ref{T:tame} where $Q'$ is the component of $Q$ containing the vertex $\varnothing$. So $A$ is wild. 
\end{proof}

Putting everything together, we obtain the classification of the representation type of the descent algebra of type $\B$.

\begin{proof}[Proof of Theorem \ref{T:RepTypeB}] Combine  Lemmas \ref{L:p2n2rep}, \ref{L:p2n3rep}, \ref{L:p3n4rep} and \ref{L:p>2n5repB}.
\end{proof}

\section{The Representation Type of the Descent Algebra of type $\D$}\label{S:RepTypeD}

In this section, we classify the representation type of the descent algebras of type $\D$ as given in Theorem \ref{T:RepTypeD} below.


\begin{theorem}\label{T:RepTypeD} Let $n\geq 4$. The descent algebra $\Des_F(\D_n)$ is
\begin{enumerate}[(i)]
  \item tame if $p\geq 5$ and $n=4$,
  \item wild otherwise.
\end{enumerate}
\end{theorem}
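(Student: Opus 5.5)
The plan follows the type $\B$ strategy: reduce to small $n$ via the surjections of Theorem \ref{T:surjD}, then settle the remaining cases from their Ext-quivers. By Corollary \ref{C:wildrepD} and Theorem \ref{T:RepTypeB}, $\Des_F(\D_n)$ is wild whenever $\Des_F(\B_{n-2})$ is wild. This happens when $p=2$ and $n-2\geq 3$, and when $p\geq 3$ and $n-2\geq 5$. So the cases still open are $p=2$ with $n\in\{4\}$ (and also $n=5$ if one wants to avoid the $\B_3$ route), and $p\geq 3$ with $n\in\{4,5,6\}$. Here $\Des_F(\B_{n-2})$ is of finite type, or tame, so the surjection gives no information.

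For $p=2$ and $n$ even, Lemma \ref{L:Qp2neven} gives a single vertex with at least $n\geq 4$ loops. The separated quiver is then a star with at least $4$ parallel arrows, which is not Dynkin or extended Dynkin. Hence Corollary \ref{C:wildquotient} shows the algebra is wild. For $p=2$ and $n=5$ the case is already covered, since $\B_3$ is wild.

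For $p\geq 3$ and $n=5,6$, I would compute the Ext-quiver of $\Des_F(\D_n)$ in Magma, using the idempotents of \cite{Benson/Lim:2025a}. For primes $p\nmid n_W$, Corollary \ref{C:BensonLimExtQuiver} lets us work over $\mathbb{C}$. The finitely many primes dividing $n_W$ are handled separately, with Lemma \ref{L:Extmodp} supplying arrows that persist modulo $p$. In each case I expect to find a vertex, presumably $\varnothing$, with at least four outgoing arrows, or a multiple arrow, so that the separated quiver contains a non-(extended) Dynkin component. Corollary \ref{C:wildquotient} then finishes these cases. For $p=3$ and $n=4$ I would likewise look for a loop or an extra arrow that forces wildness, again arguing through the Cartan matrix $D^\top CD$ of Theorem \ref{T: APW} and Proposition \ref{P:projsimple}, as in Lemma \ref{L:n5}(ii).

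The main obstacle is $n=4$ with $p\geq 5$, where tameness must actually be proved. Here $5\nmid|\D_4|=192$, so the Ext-quiver agrees with the characteristic zero one. I would compute that quiver $Q$ and compare $\dim \Des_F(\D_4)=16$ with the number of vertices, arrows and nonzero paths, in order to show, as in Lemma \ref{L:p3n4iso}(i), that $\Des_F(\D_4)\cong FQ$ or a quotient with $\Rad^2=0$. If $\Des_F(\D_4)\cong FQ$, I then need one component whose underlying graph is extended Dynkin, presumably $\tilde{\D}_4$ (a vertex with four neighbours), with every other component Dynkin. Theorems \ref{T: Gab} and \ref{T:tame} then give tameness. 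If the algebra is not hereditary, Theorem \ref{T:separatedquiver}(ii) would be used instead.
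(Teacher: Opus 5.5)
There is a genuine gap in the step where you claim that, for $p\ge 3$ and $n\in\{5,6\}$, Corollary~\ref{C:wildquotient} finishes every case. Your reduction to $n\le 6$, your treatment of $p=2$, and your plan for $n=4$, $p\ge 5$ otherwise match the paper. For odd $n$ at $p=2$, your route through $\B_{n-2}$ is actually cleaner than Lemma~\ref{L:wildDB}(ii), which cites Lemma~\ref{L:Qp2neven}, a result about even $n$ only.

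The claim fails for $n=5$ and $p\ge 7$. There the Ext-quiver (Figure~\ref{F:TypeDn5p0}) has $14$ vertices and $10$ arrows, with no loops and no multiple arrows. Its maximal out-degree is $3$, at the vertex $10$, which has arrows to $3,6,7$. Its separated quiver is a disjoint union of Dynkin diagrams: the component on $\{10,3',6',7',13,12\}$ is of type $\E_6$ and all the others are of type $\A$. So $\Des_F(\D_5)/\Rad^2(\Des_F(\D_5))$ has finite type, and the separated quiver says nothing about wildness. Your expected vertex ``with at least four outgoing arrows, or a multiple arrow'' does not exist here. Even where it does exist, four outgoing arrows is not enough on its own: a vertex with exactly four outgoing arrows to distinct vertices and nothing else attached gives a $\wt\D_4$ component, which is extended Dynkin.

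The missing idea is the dimension count you already planned for $\D_4$. In Figure~\ref{F:TypeDn5p0} there are six paths of length two: $14\to10\to3$, $14\to10\to6$, $14\to10\to7$, $10\to6\to2$, $10\to7\to4$ and $12\to7\to4$. There are two paths of length three and none longer. Hence $\dim FQ=14+10+6+2=32=\dim\Des_F(\D_5)$. By Theorem~\ref{T:Gabriel} the admissible ideal is zero, so $\Des_F(\D_5)\cong FQ$ is hereditary.

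You must then look at the underlying graph of $Q$ itself, not the separated quiver. Its non-trivial component is a tree on eleven vertices in which $10$ has four neighbours and $7$ has three. It therefore properly contains $\wt\D_4$ and is neither Dynkin nor extended Dynkin, so Theorems~\ref{T: Gab} and \ref{T:tame} give wildness. This is how the paper settles the case in Lemma~\ref{L:DtheRest}. The other cases with $n\in\{5,6\}$, and the case $n=4$, $p=3$, do fall to Corollary~\ref{C:wildquotient} once the Magma quivers are computed.

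A minor correction for $n=4$, $p\ge 5$: the extended Dynkin component is the Kronecker quiver, the double arrow $11\to 3$ in Figure~\ref{F:TypeDn4p0}, not $\wt\D_4$. It sits alongside a $\D_4$ component and isolated vertices, so your method is unaffected. The criterion for transferring the characteristic-zero quiver is $p\nmid n_{\D_4}$ from Corollary~\ref{C:BensonLimExtQuiver}. This happens to coincide with $p\nmid|\D_4|$ here.
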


Unlike the type $\B$ case, there is no  surjective algebra homomorphism from a descent algebra of type $\D$ to another of the same type. In fact, by studying the Ext-quivers of $\Des_F(\D_4)$ and $\Des_F(\D_5)$ for $p\geq 7$ (see Figures \ref{F:TypeDn4p0} and \ref{F:TypeDn5p0} in Appendix \ref{Appendix:ExtD}), we conclude, using Lemma \ref{L:surjExt}, that $\Des_F(\D_5)$ does not map onto $\Des_F(\D_4)$ because, for example, we have two arrows from a vertex to another in the Ext-quiver of $\Des_F(\D_4)$ but there is no such subquiver in the Ext-quiver of $\Des_F(\D_5)$. 

Instead, to prove Theorem \ref{T:RepTypeD}, we make use of Corollary \ref{C:wildrepD} and our result for  type $\B$ in Theorem \ref{T:RepTypeB} to reduce our problem in type $\D_n$ to small cases of $n$. This yields the following lemma.

\begin{lemma}\label{L:wildDB} The descent algebra $\Des_F(\D_n)$ is wild if either
\begin{enumerate}[(i)]
  \item $n\geq 7$ and $p\geq 3$, or
  \item $p=2$.
\end{enumerate}
\end{lemma}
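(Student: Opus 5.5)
The plan is to reduce both parts to the type $\B$ classification via Corollary \ref{C:wildrepD}, which says that wildness of $\Des_F(\B_{n-2})$ forces wildness of $\Des_F(\D_n)$ for $n\geq 4$. The needed type $\B$ inputs are already in Theorem \ref{T:RepTypeB}, more precisely in Lemmas \ref{L:p2n3rep} and \ref{L:p>2n5repB}.

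For part (i), take $n\geq 7$ and $p\geq 3$. Then $n-2\geq 5$, so $\Des_F(\B_{n-2})$ is wild by Lemma \ref{L:p>2n5repB}. Corollary \ref{C:wildrepD} then shows that $\Des_F(\D_n)$ is wild. This step is immediate.

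For part (ii), take $p=2$ and split according to the value of $n$.

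\emph{Case $n\geq 5$.} Here $n-2\geq 3$, so $\Des_F(\B_{n-2})$ is wild by Lemma \ref{L:p2n3rep}. Hence $\Des_F(\D_n)$ is wild by Corollary \ref{C:wildrepD}.

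\emph{Case $n=4$.} The reduction fails here, because $\Des_F(\B_2)$ is only tame when $p=2$ (Lemma \ref{L:p2n2rep}). So this case needs a direct argument; it is the only genuine step, though it is easy. Since $n=4$ is even, Lemma \ref{L:Qp2neven} says the Ext-quiver of $\Des_F(\D_4)$ is a single vertex with at least $4$ loops. Its separated quiver is therefore two vertices joined by at least $4$ parallel arrows in the same direction. This is a generalized Kronecker quiver with at least $3$ arrows, which is neither a Dynkin nor an extended Dynkin diagram. By Corollary \ref{C:wildquotient}, $\Des_F(\D_4)$ is wild.

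The same loop argument also covers every even $n$ directly, but the case $n=4$ is the only one the reduction to type $\B$ does not already handle.
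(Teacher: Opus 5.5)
Your proof is correct. Part (i) is the paper's argument. The paper's proof cites Theorem \ref{T:surjB}, but the map actually needed is $\Des_F(\D_n)\to\Des_F(\B_{n-2})$ from Theorem \ref{T:surjD}, packaged as Corollary \ref{C:wildrepD}, which is exactly what you use.

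For part (ii) you take a different route.
\begin{itemize}
\item The paper invokes Lemma \ref{L:Qp2neven} uniformly for all $n\geq 4$ (a single vertex with at least $n$ loops) and concludes with Corollary \ref{C:wildquotient}.
\item You instead use the type $\B$ reduction (Lemma \ref{L:p2n3rep} with Corollary \ref{C:wildrepD}) for every $n\geq 5$, and keep the loop argument only for $n=4$, where the reduction fails because $\Des_F(\B_2)$ is merely tame.
\end{itemize}

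Your split is the more careful one. Lemma \ref{L:Qp2neven} is stated and proved only for even $n$. For odd $n$ and $p=2$ there are two simple modules, since $\Gamma_2(n)=\{\varnothing,(n)\}$. So the paper's one-line argument does not literally cover odd $n$, whereas your reduction handles every odd $n\geq 5$ without any information about the Ext-quiver there.

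What the paper's approach gives, for even $n$, is a self-contained argument that does not depend on Theorem \ref{T:surjD} or the type $\B$ classification, plus an explicit lower bound of $n$ loops. Yours covers the whole statement, at the cost of relying on those type $\B$ results.
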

\begin{proof} For part (i), we use Lemma \ref{L: surj alg} and Theorems \ref{T:surjB} and \ref{T:RepTypeB}. For part (ii), by Lemma \ref{L:Qp2neven}, the Ext-quiver of $\Des_F(\D_n)$ contains a single vertex with at least $n$ loops where $n\geq 4$. By Corollary \ref{C:wildquotient}, $\Des_F(\D_n)$ is wild. 
\end{proof}

In view of Lemma \ref{L:wildDB}, to complete the classification, we are left with the cases $n=4,5,6$ when $p\geq 3$. In order to do this, we require Magma \cite{Magma} computation for their Ext-quivers. We briefly explain the feasibility of the computation.

In \cite[Section 8]{Benson/Lim:2025a}, Benson and the second author gave a construction for a complete set of primitive orthogonal idempotents of $\Des_F(W)$ for arbitrary Coxeter group $W$ and field $F$. Since \[\dim\Ext^1_A(S,T)=\dim (e_T\Rad(A)e_S/e_T\Rad^2(A)e_S),\] theoretically, we can compute the Ext-quiver of $\Des_F(W)$.  Furthermore, since we are only interested in the cases when $n=4,5,6$, by Corollary \ref{C:BensonLimExtQuiver}, there are only finite cases of prime numbers $p$ we need to consider, i.e., those $p$ such that $p=\infty$ or $p\mid n_{\D_n}$. We summarise in Table \ref{Table:n_D} below. For completeness, we include the numbers $d_{\D_n,i}$'s in \cite[Definition 9.9]{Benson/Lim:2025a}.

\begin{table}[H]
\centering
\renewcommand{\arraystretch}{1.2}
\begin{tabular}{|c|c|c|c|c|c|}
\hline
$n$ & $d_{\D_n,1}$ & $d_{\D_n,2}$ & $d_{\D_n,3}$ & $|\D_n|$ & $n_{\D_n}$ \\ \hline
4 & 1 & - & - & $2^6\cdot 3$ & $2^6\cdot 3$ \\ \hline
5 & 1 & $2^8\cdot 3^2$ & $2^3\cdot 3$ & $2^7\cdot 3\cdot 5$ & $2^8\cdot 3^2\cdot 5$ \\ \hline
6 & 1 & $2^{22}\cdot 3^5$ & - & $2^9\cdot 3^2\cdot 5$ & $2^{22}\cdot 3^5\cdot 5$ \\ \hline
\end{tabular}
\caption{Values of $d_{\D_n,i}$'s and $n_{\D_n}$ for $n=4,5,6$}\label{Table:n_D}
\end{table}

Notice that, from the table, the only primes $p$ dividing $n_{\D_n}$ are precisely the primes $p$ dividing $|\D_n|$ for these values of $n$ (see \cite[Question 9.13]{Benson/Lim:2025a}). The Ext-quivers of these algebras are presented in Appendix \ref{Appendix:ExtD} and we refer to the diagrams there for the proofs of the lemmas below.

\begin{lemma}\label{L:DExt2} Let $p\geq 5$ and $A=\Des_F(\D_4)$.
\begin{enumerate}[(i)]
\item The Jacobson radical $\Rad(A)$ is 5-dimensional with a basis \[\{x_{\{0\}}-x_{\{1\}},x_{\{1\}}-x_{\{2\}},x_{\{2\}}-x_{\{3\}}, x_{\{0,2\}}-x_{\{1,2\}},x_{\{1,2\}}-x_{\{2,3\}}\}.\]
\item The algebra $A$ is two-nilpotent.
\item Let $Q$ be the Ext-quiver of $A$. Then $A\cong F Q$.
\item The algebra $A$ has tame representation type.
\end{enumerate}
\end{lemma}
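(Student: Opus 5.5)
The plan is to get the radical from Solomon's description, and then read off the rest from the Ext-quiver (Figure \ref{F:TypeDn4p0}) together with a dimension count.

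For (i), I apply Theorem \ref{T: Sol Radical}. By Lemma \ref{L: typeD Normaliser}, every index $[\N_W(W_K):W_K]$ for $W=\D_4$ is a product of factors $m_i!\,2^{m_i}$ with $m_i\le 4$, possibly times $\tfrac12$. Hence its only prime divisors are $2$ and $3$, and for $p\ge5$ no $x_L$ lies in the radical on divisibility grounds. So $\Rad(A)$ is spanned by the differences $x_J-x_K$ with $J,K$ conjugate, and $\Gamma_p(4)=\Gamma(4)$ has $|\Gamma(4)|=4+3+4=11$ elements. This gives $\dim\Rad(A)=16-11=5$. To exhibit the basis, I list the conjugacy classes of subsets of $T=[0,3]$:
\begin{itemize}
\item the four singletons are mutually conjugate, since all reflections are conjugate in the simply-laced $\D_4$;
\item the three subsets $\{0,2\},\{1,2\},\{2,3\}$ of type $\A_2$ are conjugate, all with label $(3,1)$;
\item the three commuting pairs $\{0,1\},\{0,3\},\{1,3\}$ carry the distinct labels $(1)$, $(2^2,+)$, $(2^2,-)$, so they are not conjugate;
\item every other class is a singleton.
\end{itemize}
The stated differences are linearly independent because $\{x_J\}$ is a basis, so they span $\Rad(A)$ and give (i).

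For (ii) and (iii), I use the Ext-quiver $Q$ of $A$, computed in Appendix \ref{Appendix:ExtD}. By Corollary \ref{C:BensonLimExtQuiver}, since $p\nmid n_{\D_4}=2^6\cdot3$ for $p\ge5$, this is the same quiver as in characteristic $\infty$. The quiver has $11$ vertices and $5$ arrows, and no vertex is both the target of one arrow and the source of another, so there are no paths of length $2$. Because the number of arrows equals $\dim\Rad(A)/\Rad^2(A)$, we get $\dim\Rad^2(A)=5-5=0$, which is (ii). For (iii), Theorem \ref{T:Gabriel} gives $A\cong FQ/I$ with $(FQ^+)^2\subseteq I\subseteq(FQ^+)^2$; since $(FQ^+)^2=0$, we have $I=0$. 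As a consistency check, $\dim FQ=11+5=16=\dim A$. As a cross-check independent of the computer calculation, I could also verify $\Rad^2(A)=0$ directly with Equation \eqref{Eq:DesM}, computing the $25$ products of basis elements of $\Rad(A)$ via the numbers $a_{JKL}$.

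For (iv), $A\cong FQ$ is a product of path algebras of the connected components of $Q$, each without oriented cycles. One component contains a double arrow between two vertices, whose underlying graph is the Kronecker graph $\widetilde{\A}_1$, so it is tame by Theorem \ref{T:tame}. The remaining components have Dynkin underlying graphs, so they are of finite type by Theorem \ref{T: Gab}. Hence $A$ is not of finite type and is not wild, so it is tame by Theorem \ref{T:trichotomy}.

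The main obstacle is that (ii) rests on the Magma-computed quiver, specifically on the claim that it has exactly $5$ arrows; if that is in doubt, the hand computation of products via $a_{JKL}$ is the fallback.
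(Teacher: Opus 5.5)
Your proof is correct and follows essentially the paper's route. The radical comes from Solomon/Atkinson--Pfeiffer--van Willigenburg, Gabriel's theorem is then applied to the computed quiver, which has no paths of length two, and the Dynkin/extended Dynkin classification is applied to its components. The only variation is in (ii): the paper checks $bb'=0$ directly for the five radical basis elements via the multiplication rule, so 2-nilpotency does not depend on the Magma computation, whereas you deduce it from the arrow count and keep that direct check as a fallback. Your identification of the Kronecker component as $\widetilde{\A}_1$ is the correct one; the paper's text calls it $\widetilde{\A}_2$.
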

\begin{proof} From the general theory, we obtain the basis $B$ for $\Rad(A)$. It is straightforward to check using the multiplication rule for $A$ that $bb'=0$ for any $b,b'\in B$. Therefore, $A$ is 2-nilpotent. 
Let $Q$ be the Ext-quiver of $A$ as given in Figure \ref{F:TypeDn4p0}. By Theorem \ref{T:Gabriel}, $A\cong F Q/I$ for some admissible ideal $I$ of $F Q$. Examining $Q$, we observe that it does not contain any path of length two. So we must have $I=0$. Since $Q$ is a disjoint union of Dynkin diagrams of types $\A_1$ and $\D_4$, and the extended Dynkin diagram $\wt\A_2$, by Theorem \ref{T:tame}, $A$ is tame.
\end{proof}

\begin{lemma}\label{L:DtheRest} The algebra $\Des_F(\D_n)$ is wild for the following two cases:
\begin{enumerate}[(i)]
  \item $p=3$ and $n\in\{4,5,6\}$;
  \item $p\geq 5$ and $n\in\{5,6\}$.
\end{enumerate}
\end{lemma}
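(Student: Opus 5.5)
The plan is to exhibit, for each of the five cases, a connected component of the separated quiver of the Ext-quiver that is neither Dynkin nor extended Dynkin. Corollary \ref{C:wildquotient} then gives wildness at once. The Ext-quivers are the ones computed with Magma and displayed in Appendix \ref{Appendix:ExtD}. Their computation uses the idempotents of \cite[Section 8]{Benson/Lim:2025a} and the formula $\dim\Ext^1_A(S,T)=\dim(e_T\Rad(A)e_S/e_T\Rad^2(A)e_S)$.

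By Corollary \ref{C:BensonLimExtQuiver} and Table \ref{Table:n_D}, the only primes needing separate computation are $p=3$ for $n\in\{4,5,6\}$ and $p=5$ for $n\in\{5,6\}$. Every prime $p\geq 7$ gives the same Ext-quiver as $p=\infty$. So for case (ii) there are four quivers to inspect: $n=5,6$ with $p=5$ and with $p=\infty$. Case (i) needs three quivers, one for each $n\in\{4,5,6\}$ with $p=3$.

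For the characteristic-zero quivers I would also check by hand that the relevant arrows really occur, as a sanity check on the Magma output. For $p=5$ and $p=3$, Lemma \ref{L:Extmodp} shows that an Ext between simples that survive reduction persists. The decomposition matrix then identifies which simples become isomorphic after reduction, in the same way as in the $p=3$ part of the proof of Lemma \ref{L:p>2n5repB}. Through these merges, arrows of the characteristic-zero quiver accumulate at fewer vertices.

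In each quiver I would look for one of two configurations. The first is a vertex with at least four outgoing arrows to distinct vertices, where one of those targets also receives another arrow. In the separated quiver this gives a star with a branch, which is not Dynkin or extended Dynkin. The second is a loop, or a double arrow, combined with any further arrow at that vertex; a loop $i\to i$ already gives the edge $i - i'$ in the separated quiver. For $n=6$ the subquiver from the surjection $\gamma$ onto $\Des_F(\B_4)$ (Lemma \ref{L:surjExt}) only contributes Dynkin pieces, so the witness has to come from vertices labelled by partitions of size $5$ or $6$. I expect the vertex $\varnothing$, or a vertex of the form $(\lambda,\pm)$, to carry many outgoing arrows there, mirroring Lemma \ref{L:n6}.

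The main obstacle is the $p=3$ cases, and in particular $n=4$. Here the algebra is small and the merging of simples may produce only loops with few other arrows. If the separated quiver turns out to be too small to be wild, I would fall back on Lemma \ref{L:projinj} or on an explicit presentation $FQ/I$ obtained from Theorem \ref{T:Gabriel} together with the Cartan matrix $D^\top CD$ from Theorem \ref{T: APW}. I would then argue wildness through a suitable quotient algebra, for example a local algebra with three loops, using Lemma \ref{L: surj alg}.
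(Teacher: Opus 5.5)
\textbf{There is a genuine gap.} Your uniform strategy via Corollary \ref{C:wildquotient} fails for $n=5$, $p\geq 7$, which is the same quiver as $p=\infty$.

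The Ext-quiver $Q$ in that case (Figure \ref{F:TypeDn5p0}) has arrows
\[6\to 2,\ 7\to 4,\ 9\to 2,\ 10\to 3,\ 10\to 6,\ 10\to 7,\ 12\to 7,\ 13\to 3,\ 14\to 8,\ 14\to 10.\]
Its separated quiver splits into the following components:
\begin{itemize}
\item $\{10,3',6',7',12,13\}$, of type $\E_6$ (centre $10$ with arms $3'$--$13$, $6'$, $7'$--$12$);
\item two components of type $\A_3$, namely $8'$--$14$--$10'$ and $6$--$2'$--$9$;
\item one component of type $\A_2$, namely $7$--$4'$;
\item isolated vertices.
\end{itemize}
All of these are Dynkin, so $A/\Rad^2(A)$ has finite type. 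No inspection of this quiver, by hand or by Magma, can produce a wild component.

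Your fallback paragraph expects trouble at $p=3$, $n=4$, but that case is fine. In its separated quiver, $2'$ carries a double edge to $10$ and the loop edge to $2$, which is already wild. The case that genuinely needs extra input is this characteristic-zero $\D_5$ case.

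\textbf{The missing idea, which the paper uses, is to show there are no relations.} Counting paths in $Q$ gives $14$ vertices, $10$ arrows, $6$ paths of length two and $2$ paths of length three. That totals $32=2^5=\dim\Des_F(\D_5)$. So by Theorem \ref{T:Gabriel} the admissible ideal is zero and $\Des_F(\D_5)\cong FQ$. The component of $Q$ through $10$ is a tree in which $10$ has the four neighbours $3,6,7,14$. It also has further edges such as $14$--$8$ and $7$--$12$. It is therefore neither Dynkin nor extended Dynkin, and Theorems \ref{T: Gab} and \ref{T:tame} give wildness.

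\textbf{Your ``second configuration'' is not a valid criterion as stated.} A single loop at $i$ plus one further arrow only gives an $\A_3$ in the separated quiver. Compare $\Des_F(\B_4)$ at $p=3$, which has exactly this and is of finite type. Likewise, a double arrow $i\rightrightarrows j$ plus an arrow leaving $j$ gives a Kronecker component and a separate $\A_2$. Orientation matters. What you need is, for example, two loops plus one more arrow at that vertex, or a double arrow together with a second arrow into the same target or out of the same source.
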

\begin{proof} Using Corollary \ref{C:wildquotient} and studying the Figures \ref{F:TypeDn4p3}, \ref{F:TypeDn5p3}, \ref{F:TypeDn5p5}, \ref{F:TypeDn6p3}, \ref{F:TypeDn6p5} and \ref{F:TypeDn6p0}, we obtain all cases in the statement with the exception of $n=5$ and $p\geq 7$. In this case, let $Q$ be the Ext-quiver of $\Des_F(\D_5)$ (see Figure \ref{F:TypeDn5p0}). By  comparing dimensions  we see that $\Des_F(\D_5)\cong F Q$. By Theorem \ref{T: Gab}, $\Des_F(\D_5)$ is wild.
\end{proof}

Putting everything together, we obtain the classification of the representation type of the descent algebra of type $\D$.

\begin{proof}[Proof of Theorem \ref{T:RepTypeD}] Combine Lemmas \ref{L:wildDB}, \ref{L:DExt2} and \ref{L:DtheRest}.
\end{proof}

\section{The Representation Type of the Descent Algebra of type $\I$}\label{S:RepTypeI}

In this section, we classify the representation type of the descent algebras of type $\I$. Let $n\geq 4$. Consider the Coxeter graph of $\I_n$ given below:
\[\begin{tikzpicture}
\node at (0,0) {$\sb$};
\node at (1,0) {$\sb$};
\node at (0,-.5) {$s_{1}$};
\node at (1,-.5) {$s_{2}$};
\draw[-] (0,0) to (1,0);
\node at (0.5,.2) {\tiny{$n$}};
\end{tikzpicture}\] The descent algebra $\Des_F(\I_n)$ has 4-dimensional with a basis $\{1,x_1,x_2,x_3\}$ where 
\begin{align*}
  1&=x_{\{s_1,s_2\}},& x_1&=x_{\{s_1\}},& x_2&=x_{\{s_2\}},& x_3&=x_\varnothing.
\end{align*} It is easy to work out the following lemma.

\begin{lemma}\label{L:TypeITable} Let $\ell=\lfloor \frac{n}{2}\rfloor$ and $\varepsilon=\delta_{n,\text{even}}$. We have the following multiplication table for $\Des_F(\I_n)$:
\[\begin{array}{c|cccc}
&1&x_1&x_2&x_3\\ \hline
1&1&x_1&x_2&x_3\\
x_1&x_1&(1+\varepsilon)x_1+(\ell-\varepsilon)x_3&(1-\varepsilon)x_2+\ell x_3&nx_3\\
x_2&x_2&(1-\varepsilon)x_1+\ell x_3&(1+\varepsilon)x_2+(\ell-\varepsilon)x_3&n x_3\\
x_3&x_3&nx_3&nx_3&2nx_3
\end{array}\]
\end{lemma}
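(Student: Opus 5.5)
The plan is to compute each product directly from Solomon's formula \eqref{Eq:DesM}: $x_Jx_K=\sum_L a_{JKL}x_L$, where $a_{JKL}$ counts the $w\in X_{JK}$ with $w^{-1}Jw\cap K=L$. All entries are then integers reduced into $F$.

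\textbf{Easy rows and columns.} Since $X_S=\{1\}$, we have $x_S=1$, so the row and column labelled $1$ are immediate. If $J=\varnothing$ or $K=\varnothing$, then every $L$ that occurs is $\varnothing$. The number of terms is then $|X_{JK}|$, the number of $(W_J,W_K)$-double cosets.
\begin{itemize}
\item For $K=\varnothing$, these double cosets are just the $W_J$-cosets, so their number is $[W:W_J]$.
\item This gives $x_1x_3=x_2x_3=nx_3$, using $|W|=2n$ and $|W_{\{s_i\}}|=2$.
\item It also gives $x_3x_3=2nx_3$, and symmetrically $x_3x_1=x_3x_2=nx_3$.
\end{itemize}

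\textbf{Products of two singletons.} For $J=\{s_i\}$ and $K=\{s_j\}$, the intersection $w^{-1}Jw\cap K$ is either $K$ or $\varnothing$. It equals $K$ exactly when $w^{-1}s_iw=s_j$, and then the double coset $W_JwW_K=wW_K$ has size $2$; otherwise the double coset has size $4$. So I need two numbers: the number of minimal double coset representatives $w$ with $w^{-1}s_iw=s_j$, and the total number of double cosets.

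\textbf{Conjugating $s_i$ to $s_j$.} The candidates for $w$ are the identity and the longest element $w_0$, which is central when $n$ is even and swaps $s_1,s_2$ under conjugation when $n$ is odd. Equivalently, one can use Lemma \ref{L:beta}: $a_{JKK}=\beta_{JK}=\varphi_J(c_K)$, the number of fixed points of the reflection $s_j$ on $W/W_{\{s_i\}}$.
\begin{itemize}
\item For $i=j$ the count is $1+\varepsilon$.
\item For $i\neq j$ the count is $1-\varepsilon$.
\end{itemize}

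\textbf{Counting the remaining double cosets.} I would get the number of size-$4$ double cosets by counting elements, $2n=2(\#\text{size }2)+4(\#\text{size }4)$.
\begin{itemize}
\item For $i=j$ this gives $(2n-2(1+\varepsilon))/4=\ell-\varepsilon$.
\item For $i\ne j$ it gives $(2n-2(1-\varepsilon))/4=\ell$.
\end{itemize}
Both are checked separately in the cases $n$ even and $n$ odd, using $\ell=\lfloor n/2\rfloor$. This yields the four entries $x_ix_j$ in the table.

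\textbf{Main obstacle.} The only delicate point is confirming that exactly $1\pm\varepsilon$ minimal representatives conjugate $s_i$ onto $s_j$. Equivalently, the reflections centralising $s_i$ in the dihedral group of order $2n$ are $s_i$ itself and, when $n$ is even, the reflection perpendicular to it. I would verify this by viewing $\I_n$ as the symmetry group of a regular $n$-gon acting on its reflections.
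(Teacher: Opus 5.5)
Your proposal is correct. It is exactly the direct verification via Solomon's formula \eqref{Eq:DesM} that the paper has in mind when it says the table is ``easy to work out'': the count $2n=2a_{JKK}+4a_{JK\varnothing}$, with $a_{JKK}=|C_W(s_i)|/2$ when $s_i,s_j$ are conjugate and $0$ otherwise, gives every entry. One wording point: the minimal representative of the double coset containing $w_0$ is $w_0s_j$ (of length $n-1$), not $w_0$ itself, but since you only count double cosets this does not affect the argument.
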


With Lemma \ref{L:TypeITable}, we have the detailed structure for $\Des_F(\I_n)$ as follows. 

\begin{lemma}\label{L:TypeIsimple} Let $A=\Des_F(\I_n)$, $\ell=\lfloor \frac{n}{2}\rfloor$ and $Q$ be the Ext-quiver of $A$.
\begin{enumerate}[(i)]
  \item If $p=2$ and $n$ is odd, then there are two non-isomorphic simple $A$-modules $U$ and $V$. The projective indecomposable $A$-modules are $\Proj(U)=\begin{bmatrix} U\\ U\end{bmatrix}$ and $\Proj(V)=\begin{bmatrix} V\\ U\end{bmatrix}$. Furthermore, $Q$ is
  \[\begin{tikzcd}
V\arrow[r,"\alpha"]&U\arrow[out=15,in=-15,loop,distance=20pt,"\epsilon"]
\end{tikzcd}\] and $A\cong F Q/(\epsilon^2,\epsilon\alpha)$.
\item If $p=2$ and $n$ is even, then there is a single simple $A$-module $U$ and $\Proj(U)=\begin{bmatrix} U\\ U\oplus U\\ U\end{bmatrix}$. Furthermore, $Q$ is
\[\begin{tikzcd}
U\arrow[out=15,in=-15,loop,distance=20pt,"\alpha"] \arrow[out=195,in=165,loop,distance=20pt,"\beta"]
\end{tikzcd}\] and $A\cong F Q/(\alpha\beta-\beta\alpha,\alpha^2,\beta^2-(\ell+1)\alpha\beta)$.
\item If $p\geq 3$, $p\mid n$, and $n$ is odd, then there are two non-isomorphic simple $A$-modules $U$ and $V$ where $U$ is projective and $\Proj(V)=\begin{bmatrix} V\\ U\oplus V\end{bmatrix}$. Furthermore, $Q$ is
\[\begin{tikzcd}
U&V\arrow[l,"\alpha"]\arrow[out=15,in=-15,loop,distance=20pt,"\epsilon"]
\end{tikzcd}\] and $A\cong F Q/(\epsilon^2,\alpha\epsilon)$.
\item If $p\geq 3$, $p\mid n$, and $n$ is even, then there are three non-isomorphic simple $A$-modules $U$, $V$, and $Z$ where $U,V$ are projective and $\Proj(Z)=\begin{bmatrix} Z\\ Z\end{bmatrix}$.  Furthermore, $Q$ is
\[\begin{tikzcd}
U&V&Z\arrow[out=15,in=-15,loop,distance=20pt,"\epsilon"]
\end{tikzcd}\] and $A\cong F Q/(\epsilon^2)$.
\item If $p\geq 3$, $p\nmid n$, and $n$ is odd, then there are three non-isomorphic simple $A$-modules $U$, $V$ and $Z$ where $U,V$ are projective and $\Proj(Z)=\begin{bmatrix} Z\\ V\end{bmatrix}$. Furthermore, $Q$ is
\[\begin{tikzcd}
U&V&Z\arrow[l]
\end{tikzcd}\] and $A\cong F Q$.
  \item If $p\geq 3$, $p\nmid n$, and $n$ is even, then $A$ is semisimple.
\end{enumerate}
\end{lemma}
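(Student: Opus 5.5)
The plan is to work directly with the explicit four-dimensional algebra $A=\Des_F(\I_n)$, using the multiplication table of Lemma \ref{L:TypeITable}. In each case I would carry out four steps in order.

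First, I would determine $\Rad(A)$ and the simple modules. I would use Theorem \ref{T: Sol Radical} together with the indices $[\N_W(W_J):W_J]$, which are:
\begin{itemize}
\item $1$ for $J=S$;
\item $1$ for $J=\{s_1\}$ or $\{s_2\}$ when $n$ is odd;
\item $2$ for $J=\{s_1\}$ or $\{s_2\}$ when $n$ is even;
\item $2n$ for $J=\varnothing$.
\end{itemize}
When $n$ is odd, $s_1$ and $s_2$ are conjugate; when $n$ is even they are not. This gives the number of simple modules in each case:
\begin{itemize}
\item $n$ odd: $\R=\{S,\{s_1\},\varnothing\}$, so the count is $2$ if $p\mid 2n$ and $3$ otherwise;
\item $n$ even: $\R=\{S,\{s_1\},\{s_2\},\varnothing\}$, so the count is $1$ if $p=2$, $3$ if $p$ is odd and $p\mid n$, and $4$ otherwise.
\end{itemize}
In the last subcase $A$ is $4$-dimensional with $4$ one-dimensional simples, hence semisimple, which settles (vi). The characters $\lambda_K$ of Lemma \ref{L: matrix M_F} are read off from the table by evaluating on $x_1,x_2,x_3$, and the radical is spanned by $x_1-x_2$ (when $n$ is odd) together with the $x_J$ having $p\mid[\N_W(W_J):W_J]$.

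Second, I would write down explicit orthogonal idempotents lifting the characters. For example, when $p\nmid 2n$, $e=x_3/(2n)$ is idempotent by the table, and $1-e$ splits further using $x_1,x_2$; when $p=2$ and $n$ is odd, I would lift the idempotent for the character that is nonzero on $x_1$. With these idempotents I would compute the spaces $e_T\Rad(A)e_S$ and $\Rad^2(A)$ directly from the table. This yields the radical layers of each $\Proj(\cdot)$, and hence the number of arrows via $\dim\Ext^1_A(S,T)=\dim e_T\Rad(A)e_S/e_T\Rad^2(A)e_S$. In cases (iii)–(v), Proposition \ref{P:projsimple} helps identify which simple factor sits in the socle.

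Third, I would get the presentation by Theorem \ref{T:Gabriel}. Choose elements of $\Rad(A)$ representing the arrows, verify the stated relations using the table, and conclude by a dimension count: the quotient $FQ/I$ must have dimension $4$, so no further relations can hold.

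The main obstacle is case (ii). Here $p=2$, $n$ is even and $\varepsilon=1$, so the table reduces mod $2$ to
\[x_1^2=(\ell-1)x_3,\qquad x_1x_2=x_2x_1=\ell x_3,\qquad x_2^2=(\ell-1)x_3,\]
with $x_3$ annihilating $\Rad(A)$. I would take $\alpha=x_1+x_2$ and $\beta=x_1$. Then $\alpha^2=(4\ell-2)x_3=0$ and $\alpha\beta=\beta\alpha=(2\ell-1)x_3=x_3$. Consequently
\[\beta^2=(\ell-1)x_3=(\ell+1)\alpha\beta\]
in characteristic $2$. Since $x_3\neq 0$, this gives $\Rad^2(A)=Fx_3=\soc$, so $\Proj(U)$ has Loewy layers $U,\ U\oplus U,\ U$ and the presentation is as stated. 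The other cases use the same computation with fewer relations to check.
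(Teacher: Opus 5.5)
Your proposal is correct and follows essentially the same route as the paper: count the simple modules from the conjugacy of $s_1,s_2$ and the normaliser indices, describe the radical via the Solomon/Atkinson--Pfeiffer--van Willigenburg theorem, use explicit idempotents to read off the projective covers, and finish by a dimension count. In case (ii) you make exactly the paper's choice $\alpha=x_1+x_2$, $\beta=x_1$, and your check that $\beta^2=(\ell-1)x_3=(\ell+1)\alpha\beta$ in characteristic $2$ agrees with the paper's computation.
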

\begin{proof} Let $W=\I_n$. We refer the reader to the general theory of the parametrisation of the simple $A$-modules in Section \ref{S:Descent}. The subsets $\{s_1\}$ and $\{s_2\}$ are conjugate in $W$ if and only if $n$ is odd. Furthermore, for $i\in\{1,2\}$, we have \[[\N_W(W_{\{s_i\}}):W_{\{s_i\}}]=\left \{\begin{array}{ll} 2& \text{if $n$ is even,}\\ 1&\text{otherwise.}\end{array}\right .\]

Suppose first that $n$ is even. If $2\neq p\nmid n$, we have 4 simple $A$-modules and hence $A$ is semisimple. This gives part (vi). If $2\neq p\mid n$, there are 3 simple $A$-modules $U,V,Z$ where $U,V$ are projective and $\Rad(A)=\Span\{x_3\}$. Using Lemma \ref{L:TypeITable}, the primitive orthogonal idempotents of $A$ are
\begin{align*}
  e_U&=\frac{1}{2}x_1-\frac{1}{4}x_3,\quad
  e_V=\frac{1}{2}x_2-\frac{1}{4}x_3,\quad
  e_Z=1-e_U-e_V.
\end{align*} We have $e_Zx_3e_Z=x_3$ and hence $\Proj(Z)=\begin{bmatrix} Z\\ Z\end{bmatrix}$. We must have $A\cong F Q/(\epsilon^2)$ as in part (iv). Now let $p=2$. The algebra $A$ is commutative, there is a unique simple $A$-module $U$ and $\Rad(A)=\Span\{x_1,x_2,x_3\}$. Furthermore, $x_1^2=(\ell-1)x_3=x_2^2$ and $x_1x_2=\ell x_3=x_2x_1$. So $\Rad^2(A)=\Span\{x_3\}$. Taking $\alpha=x_1+x_2$ and $\beta=x_1$, we obtain part (ii).

The proof for the case when $n$ is odd is similar. We leave it to the reader. 
\end{proof}

\begin{theorem}\label{T:RepTypeI} The algebra $\Des_F(\I_n)$ is tame if $p=2$ and $n$ is even,  and has finite representation type otherwise.
\end{theorem}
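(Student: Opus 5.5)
We go through the six cases of Lemma \ref{L:TypeIsimple} and read off the representation type from the presentation of $A=\Des_F(\I_n)$ given there. The only tame case is (ii), $p=2\mid n$. In every other case we show that $A$ is either a path algebra of a Dynkin quiver or has $\Rad^2(A)=0$ with a separated quiver that is a union of Dynkin diagrams.

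\textbf{Finite type cases.} For (vi), $A$ is semisimple and hence of finite type. For (v), $A\cong FQ$ where the underlying graph of $Q$ is $\A_1\sqcup\A_2$, so $A$ has finite type by Theorem \ref{T: Gab}.

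In cases (i), (iii) and (iv), the relations already imposed show that $\Rad^2(A)=0$:
\begin{itemize}
\item In (i), the only paths of length two are $\epsilon^2$ and $\epsilon\alpha$, and both are killed.
\item In (iii), the only paths of length two are $\epsilon^2$ and $\alpha\epsilon$, and both are killed.
\item In (iv), the only path of length two is $\epsilon^2$, which is killed.
\end{itemize}
(This also agrees with the Loewy structure of the projectives listed in Lemma \ref{L:TypeIsimple}.) So Theorem \ref{T:separatedquiver}(i) applies, and it remains to compute the separated quivers.
\begin{itemize}
\item In (i), the arrows are $V\to U$ and $U\to U$. The separated quiver has arrows $V\to U'$ and $U\to U'$, which is of type $\A_3$, together with an isolated vertex $V'$.
\item In (iii), the arrows are $V\to U$ and $V\to V$. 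The separated quiver has arrows $V\to U'$ and $V\to V'$, again of type $\A_3$, together with an isolated vertex $U$.
\item In (iv), the only arrow is the loop at $Z$. The separated quiver is $Z\to Z'$ together with isolated vertices, a union of diagrams of types $\A_2$ and $\A_1$.
\end{itemize}
All of these are unions of Dynkin diagrams, so $A$ has finite type in each case.

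\textbf{The tame case.} For (ii), we argue exactly as in Lemma \ref{L:p2n2rep}. By Lemma \ref{L:TypeIsimple}(ii), $\Proj(U)=A$ is uniserial at the top and bottom with simple socle $\Rad^2(A)=\Span\{x_3\}$. We claim $A$ is self-injective. It is commutative local with one-dimensional socle, so it is a Frobenius algebra. To confirm that the socle is simple, take an element of $\Rad(A)$ of the form $ax_1+bx_2$ that kills both $x_1$ and $x_2$. From the multiplication table this requires
\[
a(\ell-1)+b\ell=0 \quad\text{and}\quad a\ell+b(\ell-1)=0 \pmod 2.
\]
Since $\ell$ and $\ell-1$ have opposite parity, these equations force $a=b=0$. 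Hence $\soc(A)=\Span\{x_3\}$ and $\mathsf{I}(U)\cong\Proj(U)$.

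By Lemma \ref{L:projinj}, $B=A/\soc(A)$ has the same representation type as $A$. Now $\Rad^2(B)=0$, and the Ext-quiver of $B$ is one vertex with two loops. Its separated quiver is the Kronecker quiver, which is extended Dynkin $\wt\A_1$. By Theorem \ref{T:separatedquiver}(ii), $B$ is tame, and therefore $A$ is tame.

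The only step needing real care is the self-injectivity check in (ii), that is, that the socle is one-dimensional. This is the short parity computation above. Everything else is bookkeeping with Lemma \ref{L:TypeIsimple}.
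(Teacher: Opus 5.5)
Your proof is correct and follows the paper's argument. In the non-tame cases you read off $\Rad^2(A)=0$ and Dynkin separated quivers from Lemma \ref{L:TypeIsimple}, using Gabriel's theorem directly for the path algebra in case (v). In the case $p=2\mid n$ you apply Lemma \ref{L:projinj} and reduce to the Kronecker quiver.

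The only difference is in the tame case. You treat both parities of $\ell$ at once and explicitly verify $\soc(A)=\Span\{x_3\}$, so that $A$ is self-injective. The paper instead handles $\ell$ even via the isomorphism with $\Des_F(\B_2)$, and for $\ell$ odd simply asserts $\mathsf{I}(U)\cong\Proj(U)$.
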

\begin{proof} Let $A=\Des_F(\I_n)$. Suppose that we are not in the case when $p=2$ and $n$ is even. By using Lemma \ref{L:TypeIsimple}, it can be checked that $\Rad^2(A)=0$ and that the  separated quiver is a disjoint union of Dynkin diagrams of type $\A$. Therefore, $A$ has finite type.

Suppose now that $p=2$ and $n$ is even,  i.e., we are in the case of Lemma \ref{L:TypeIsimple}(ii). Let $Q$ be the Ext-quiver of $A$. If $\ell$ is even, we have $A\cong F Q/(\alpha\beta-\beta\alpha,\alpha^2,\beta^2-\alpha\beta)\cong \Des_F(\B_2)$ (see Lemma \ref{L:Q22}). By Theorem \ref{T:RepTypeB}, $A$ is tame. If $\ell$ is odd, $A\cong F Q/(\alpha\beta-\beta\alpha,\alpha^2,\beta^2)$. We have the injective hull $\mathsf{I}(U)$ is isomorphic to $\Proj(U)$. Use the same method as for $\Des_F(\B_2)$ (see the proof of Lemma \ref{L:Q22}(i)), we get that $A$ is tame.
\end{proof}

\section{The Representation Type of the Descent Algebras of types $\E$, $\FF$ and $\HH$}\label{S:RepTypeEFH}

After we have classified the representation type of the descent algebras for Coxeter groups of infinite families (including type $\A$ in \cite{Erdmann/Lim:2025}), we take the opportunity to complete the classification for the finite family of types $\E$, $\FF$ and $\HH$. For this purpose, again, we make use of Magma \cite{Magma} to compute their Ext-quivers. With the similar reasons we have explained earlier (see the paragraph right before Lemma \ref{L:DExt2}), there are only finite number of Ext-quivers to compute despite there are infinitely many primes. However, unfortunately, the authors have not managed to compute for the $\E_8$ case. 

We remind the reader about the matrix $J(i)$ and the number $n_W$ defined in \cite[Section 9]{Benson/Lim:2025a}. See also Corollary \ref{C:BensonLimExtQuiver}. The values for $n_{\FF_4}$, $n_{\HH_3}$ and $n_{\HH_4}$ are easy because the descent algebras $\Des_\Q(W)$ are 2-nilpotent. For $\E_6$ and $\E_7$, the matrices $J(i)$'s are large and computing the exact greatest common divisor $d_{W,i}$ of the determinants of all its $(s\times s)$-submatrices (where $s$ is the rank of $J(i)$) is  not feasible. As such, we only compute the determinant $d$ of a particular non-singular $(s\times s)$-submatrix so that $d_{W,i}\mid d$ which gives us an upper bound for $d_{W,i}$. This is sufficient for the analyses of both $\Des_F(\E_6)$ and $\Des_F(\E_7)$. 

\begin{table}[H]
\centering
\renewcommand{\arraystretch}{1.2}
\begin{tabular}{|c|c|c|c|c|c|c|c|}
\hline
$W$ & \multicolumn{4}{c|}{what $d_{W,i}$ divides} & $|W|$ & $n_W$& primes \\ \cline{2-5}
&$i=1$&$i=2$&$i=3$&$i=4$&&& dividing $n_{W}$\\ \hline
$\FF_4$ & $1$ &-&-&-& $2^7\cdot 3^2$& $|W|$ & $2,3$ \\ \hline
$\HH_3$ & $1$ &-&-&-&  $2^3\cdot 3\cdot 5$&  $|W|$ & $2,3,5$\\ \hline 
$\HH_4$ & $1$&-&-&-& $2^6\cdot 3^2\cdot 5^2$& $|W|$ &$2,3,5$\\ \hline
$\E_6$& $1$& $2^{36}\cdot 3^{10}\cdot 5^{11}$&  $2^{33}\cdot 3^{8}\cdot 5^{6}$& $2^{9}\cdot 3^{4}\cdot 5^{2}$ & $2^7\cdot 3^4\cdot 5$&?& $2,3,5$\\ \hline 
$\E_7$& $1$& $2^{105}\cdot 3^{33}\cdot 5^7$& $2^{10}\cdot 3^{4}\cdot 5^3$& -& $2^{10}\cdot 3^{4}\cdot 5\cdot 7$&?& $2,3,5,7$\\ \hline
\end{tabular}
\caption{Primes dividing $n_{W}$ for $W=\FF_4,\HH_3,\HH_4,\E_6,\E_7$}\label{Table:n_FH}
\end{table}

Again, in Table \ref{Table:n_FH}, the primes that divides $n_W$ are precisely the primes that divides $|W|$. So we obtain their Ext-quivers as in Figures \ref{F:TypeF4p2}--\ref{F:TypeE7p0} for various $p$. We summarise our finding about the representation type in the statement below.

\begin{theorem}\label{T:RepTypeEFH} Let $X$ be either $\FF_4$, $\HH_3$, $\HH_4$, $\E_6$ or $\E_7$, and let $A=\Des_F(X)$.
\begin{enumerate}[(i)]
\item If $X=\FF_4$, then $A$ is tame if $p\geq 5$, and wild otherwise.
\item If $X=\HH_3$, then $A$ is wild if $p=2$, and tame otherwise. 
\item If $X=\HH_4$, then $A$ is wild.
\item If $X=\E_6$, then $A$ is wild.
\item If $X=\E_7$, then $A$ is wild.
\end{enumerate}  
\end{theorem}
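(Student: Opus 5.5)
The plan is to argue case by case, with the computed Ext-quivers as input. By Corollary \ref{C:BensonLimExtQuiver} and Table \ref{Table:n_FH}, for each $X$ only finitely many characteristics need to be examined: $p=\infty$ (which covers every prime $p\nmid n_W$) and the primes dividing $|W|$. These are $p\in\{2,3\}$ for $\FF_4$, $p\in\{2,3,5\}$ for $\HH_3$ and $\HH_4$, $p\in\{2,3,5\}$ for $\E_6$, and $p\in\{2,3,5,7\}$ for $\E_7$. For each such pair $(X,p)$ I take the Ext-quiver $Q$ produced by Magma from the Benson--Lim idempotents (Figures \ref{F:TypeF4p2}--\ref{F:TypeE7p0}) and decide the representation type from $Q$ together with dimension counts.

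\textbf{Wild cases.} Here I apply Corollary \ref{C:wildquotient}. For each wild case I look for a component of the separated quiver of $Q$ that is neither Dynkin nor extended Dynkin. Typical witnesses are a vertex with at least two loops, a vertex with at least five neighbours, or a vertex with four or more outgoing arrows (as with $\varnothing$ in type $\B$). For $p=2$ in all types the number of simple modules is very small (compare Lemmas \ref{L:p2ExtQ} and \ref{L:Qp2neven}), so I expect a single vertex carrying many loops, and wildness is immediate. For $\HH_4$, $\E_6$ and $\E_7$ in every characteristic, and for $\FF_4$ with $p=3$, I expect the much larger quivers to contain such a star or multiple-loop configuration, which I would locate by inspecting the figures. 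If some quiver has no bad separated component, the fallback is the type-$\D_5$ argument: compare $\dim A$ with the number of vertices, arrows and longer paths in $Q$ to get $A\cong FQ$ on a block. Then Theorems \ref{T: Gab} and \ref{T:tame} show wildness once the underlying graph of that block is neither Dynkin nor extended Dynkin.

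\textbf{Tame cases.} These are $\FF_4$ with $p\ge5$ and $\HH_3$ with $p\ne2$. When $p=\infty$ the paper notes that $\Des_\Q(W)$ is 2-nilpotent for $\FF_4$ and $\HH_3$, and this persists for all $p\nmid n_W$. So Theorem \ref{T:separatedquiver} applies: I check that the separated quiver of $Q$ is a union of Dynkin diagrams with at least one extended Dynkin component (for instance a double arrow giving a Kronecker quiver, or a $\wt\A_n$, as for $\D_4$). This settles $\FF_4$ for $p\ge5$ and $\HH_3$ for $p\ge7$.

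For $\HH_3$ with $p=3$ and $p=5$, I would first compute the Cartan matrix via Theorem \ref{T: APW} and obtain the projective covers as in Lemmas \ref{L:p3n4iso} and \ref{L:n5}. If the algebra is still 2-nilpotent, I apply Theorem \ref{T:separatedquiver} directly. If some projective is uniserial of shape $\begin{bmatrix} M\\ M\end{bmatrix}$, creating a loop $\varepsilon$ with $\varepsilon^2=0$ and a larger Loewy length, I would try Lemma \ref{L:projinj}: remove the socle of a projective-injective module to reduce to a 2-nilpotent algebra, as in Lemma \ref{L:p2n2rep} and Theorem \ref{T:RepTypeI}. A loop with $\varepsilon^2=0$ yields an $\wt\A_1$-type (Kronecker-free) separated component only if it is isolated, so the check is that the resulting separated quiver is extended Dynkin rather than wild.

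I expect the main obstacle to be these tame small-characteristic cases for $\HH_3$. There one must pin down the relations, not just the quiver, to exclude wildness and to rule out finite type. Identifying the presentation $FQ/I$ exactly, via the dimension count and the Cartan matrix, is the step I would do most carefully.
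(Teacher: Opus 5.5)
Your skeleton matches the paper's proof. You use the finitely many Magma-computed Ext-quivers, Corollary~\ref{C:wildquotient} for every wild case, and $\Rad^2(A)=0$ together with Theorem~\ref{T:separatedquiver}(ii) for the tame ones. Your conclusions are right, but several claims you would lean on are false.

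\textbf{The tame cases.} $\HH_3$ with $p\in\{3,5\}$ is not an obstacle, and it needs neither Cartan matrices nor Lemma~\ref{L:projinj}. Since $\dim_F A=2^3=8$ equals the number of vertices plus arrows ($5+3$), Theorem~\ref{T:Gabriel} forces $I=(FQ^+)^2$, i.e.\ $\Rad^2(A)=0$. This is exactly the paper's argument. The same count ($16=12+4$ for $\FF_4$ with $p\ge 5$, and $8=6+2$ for $\HH_3$ with $p\ge 7$) is what justifies your bare assertion that 2-nilpotency ``persists'' for $p\nmid n_W$.

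Two of your statements about loops are wrong:
\begin{itemize}
\item A uniserial projective with two composition factors $M$ has Loewy length $2$. So a loop with $\varepsilon^2=0$ does not increase the Loewy length.
\item A single loop at an otherwise isolated vertex $v$ gives the separated component $v\to v'$, which is the Dynkin diagram $\A_2$, not $\wt\A_1$. The extended Dynkin component that excludes finite type is the Kronecker quiver from the double arrow $5\rightrightarrows 1$. For $\FF_4$ it comes from $12\rightrightarrows 5$, and for $\HH_3$ with $p\ge7$ from $6\rightrightarrows 2$.
\end{itemize}

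\textbf{The wild cases.} Your ``typical witnesses'' of wildness are not witnesses by themselves:
\begin{itemize}
\item A vertex with exactly two loops and nothing else yields the Kronecker quiver, which is tame. This is precisely $\Des_F(\B_2)$ at $p=2$, Lemma~\ref{L:p2n2rep}.
\item Four outgoing arrows to otherwise isolated vertices yield $\wt\D_4$.
\item Incoming and outgoing arrows at $v$ attach to different vertices of the separated quiver ($v'$ and $v$ respectively). So counting neighbours in $Q$ proves nothing.
\end{itemize}
In the actual figures the two-loop vertices always carry further arrows, so the check does succeed.

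Your expectation of a star or multiple loop does fail for $\E_6$ with $p\ge 7$. There are no loops, and every vertex of the separated quiver has degree at most $3$. What makes that case wild is that the separated component containing $14$ has two independent cycles: one through $14,6',15,9'$ and one through $14,10',16,8',13,6'$. Likewise, for $\HH_4$ with $p\ge7$ the witness is the component $\{6,9,2'\}$, which contains a double edge.
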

\begin{proof} Let $Q$ be the Ext-quiver of $A$ as in Appendices \ref{Appendix:F}, \ref{Appendix:H} and \ref{Appendix:E6}. The strategy of the proof is similar with the previous sections. Firstly, the algebra $A$ is wild if its Ext-quiver meets the hypothesis of Corollary \ref{C:wildquotient}. We leave it to the reader to check that this scenario happens for the following cases:
\begin{enumerate}[(i)]
  \item $X=\FF_4$ and $p=2,3$,
  \item $X=\HH_3$ and $p=2$,
  \item $X=\HH_4$ and $p$ is arbitrary,
  \item $X=\E_6$ and $p$ is arbitrary,
  \item $X=\E_7$ and $p$ is arbitrary. 
\end{enumerate} We then need to deal with the remaining on case-by-case basis.

(i) We have $X=\FF_4$. If $p\geq 5$, $A$ is 2-nilpotent (because $Q$ has no path of length 2) and its separated quiver is a disjoint union of Dynkin diagram and $\tilde{\A}_2$. So $A$ is tame. 

(ii) We have $X=\HH_3$. Suppose that $p\geq 3$. Notice that $\dim_FA=8$. From the Ext-quiver of $A$, we observe that the total number of vertices and arrows of is also 8. Therefore, $A$ is 2-nilpotent using Theorem \ref{T:Gabriel}. The separated quiver is a disjoint union of Dynkin diagram and $\tilde{\A}_2$. So $A$ is tame.
\end{proof}

\appendix

\newpage
In the following appendices, we present the Ext-quivers for $\Des_F(W)$ for various Coxeter groups $W$ and primes $p$. For easy access of data, we have also included the Magma outputs for some Ext-quivers. For each $p$, a triplet $<a,b,c>$ denotes $c$ arrows from the vertex $a$ to $b$.

\section{The Ext Quivers of the Descent Algebra $\Des_F(\D_n)$ for $n=4,5,6$ and $p$ is Odd}\label{Appendix:ExtD}

\begin{figure}[h!]
\[\begin{tikzcd}[cells={nodes={draw=teal, thick, circle, minimum size=7mm, inner sep=0, fill=blue!20}},every arrow/.append style={-stealth},row sep=1em, column sep=1em]
    &6\ar[d]&&&10\ar[r,shift left=.5ex]\ar[r,shift right=.5ex]&2\ar[in=-15,out=15,loop]\\
    7\ar[r]&1&9\ar[l]& &3&4&5&8
\end{tikzcd}\]
\caption{The Ext-quiver of $\Des_F(\D_4)$ when $p=3$}
\label{F:TypeDn4p3}
\end{figure}

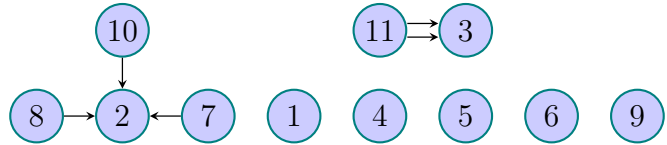
\begin{figure}[H]
\centering
\[\begin{tikzcd}[cells={nodes={draw=teal, thick, circle, minimum size=7mm, inner sep=0, fill=blue!20}},every arrow/.append style={-stealth},row sep=1em, column sep=1em]
    &10\ar[d]&&&11\ar[r,shift left=.5ex]\ar[r,shift right=.5ex]&3\\
    8\ar[r]&2&7\ar[l]& 1&4&5&6&9
\end{tikzcd}\]
\caption{The Ext-quiver of $\Des_F(\D_4)$ when $p\geq 5$}
\label{F:TypeDn4p0}
\end{figure}



\begin{figure}[ht]
\[\begin{tikzcd}[cells={nodes={draw=teal, thick, circle, minimum size=7mm, inner sep=0, fill=blue!20}},every arrow/.append style={-stealth},row sep=1em, column sep=1em]
&&11\ar[d]\ar[r]&5\\
10\ar[r]&1\ar[in=105,out=75,loop]&7\ar[l]\ar[r]\ar[d]&3\ar[dl]&&6\ar[dlll]\\
&9\ar[r]&4\ar[r]\ar[in=285,out=255,loop]&2&&&8\ar[in=105,out=75,loop]
\end{tikzcd}\]
\caption{The Ext-quiver for $\Des_F(\D_5)$ when $p=3$. The total numbers of vertices and arrows are 11 and 13 respectively. The following is the Magma output. \\
$[ <1, 1, 1>, <3, 4, 1>, <4, 2, 1>, <4, 4, 1>, <6, 4, 1>, <7, 1, 1>, <7, 3, 1>,
<7, 4, 1>, <8, 8, 1>, <9, 4, 1>, <10, 1, 1>, <11, 5, 1>, <11, 7, 1> ]$}
\label{F:TypeDn5p3}
\end{figure}

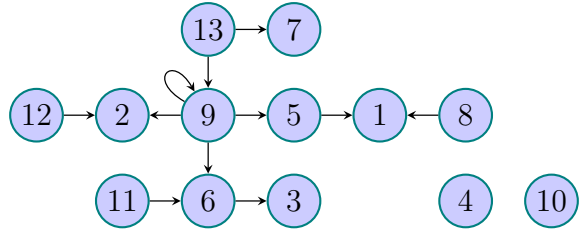
\begin{figure}[H]
\centering
\[\begin{tikzcd}[cells={nodes={draw=teal, thick, circle, minimum size=7mm, inner sep=0, fill=blue!20}},every arrow/.append style={-stealth},row sep=1em, column sep=1em]
    &&13\ar[r]\ar[d]&7\\
    12\ar[r]&2&9\ar[l]\ar[r]\ar[d]\ar[in=120,out=150,loop]&5\ar[r]&1&8\ar[l]\\
    &11\ar[r]&6\ar[r]&3&&4&10
\end{tikzcd}\]
\caption{The Ext-quiver for $\Des_F(\D_5)$ when $p=5$. The total numbers of vertices and arrows are 13 and 11 respectively. The following is the Magma output. \\
$[ <5, 1, 1>, <6, 3, 1>, <8, 1, 1>, <9, 2, 1>, <9, 5, 1>, <9, 6, 1>, <9, 9, 1>,
<11, 6, 1>, <12, 2, 1>, <13, 7, 1>, <13, 9, 1> ]$}
\label{F:TypeDn5p5}
\end{figure}

\vspace{-.8cm}
\begin{figure}[H]
\centering
\[\begin{tikzcd}[cells={nodes={draw=teal, thick, circle, minimum size=7mm, inner sep=0, fill=blue!20}},every arrow/.append style={-stealth},row sep=1em, column sep=1em]
    &&14\ar[r]\ar[d]&8\\
    13\ar[r]&3&10\ar[l]\ar[r]\ar[d]&6\ar[r]&2&9\ar[l]\\
    &12\ar[r]&7\ar[r]&4&1&5&11
\end{tikzcd}\]
\caption{The Ext-quiver for $\Des_F(\D_5)$ when $p\geq 7$. The total numbers of vertices and arrows are 14 and 10 respectively. The following is the Magma output. \\
$[ <6, 2, 1>, <7, 4, 1>, <9, 2, 1>, <10, 3, 1>, <10, 6, 1>, <10, 7, 1>, <12, 7,
1>, <13, 3, 1>, <14, 8, 1>, <14, 10, 1> ]$}
\label{F:TypeDn5p0}
\end{figure}
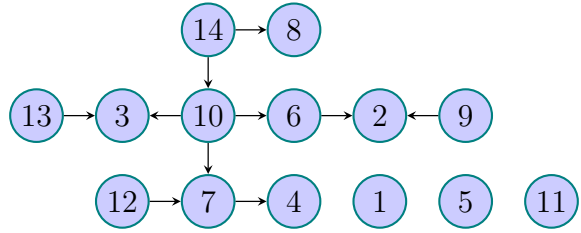

\vspace{-.8cm}
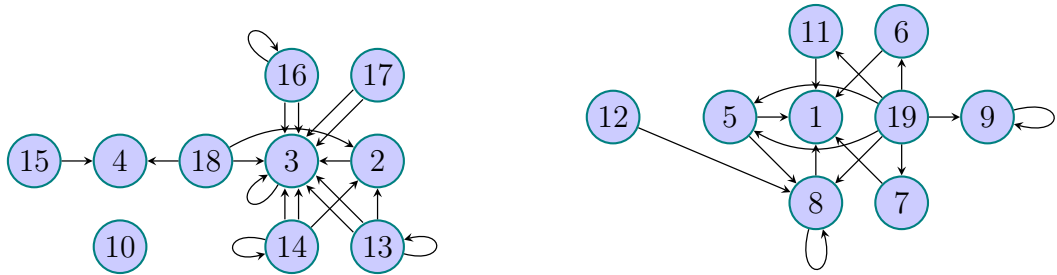
\begin{figure}[H]
\centering
\begin{minipage}{0.48\textwidth}
\[\begin{tikzcd}[cells={nodes={draw=teal, thick, circle, minimum size=7mm, inner sep=0, fill=blue!20}},every arrow/.append style={-stealth},row sep=1em, column sep=1em]
&&&16\ar[in=120,out=150,loop]\ar[d,shift left=.5ex]\ar[d,shift right=.5ex]&17\ar[dl,shift left=.5ex]\ar[dl,shift right=.5ex]\\
15\ar[r]&4&18\ar[l]\ar[r]\ar[rr,bend left]&3\ar[in=210,out=240,loop]&2\ar[l]\\
&10&&14\ar[ur]\ar[in=195,out=165,loop]\ar[u,shift left=.5ex]\ar[u,shift right=.5ex]&13\ar[u]\ar[in=15,out=-15,loop]\ar[ul,shift left=.5ex]\ar[ul,shift right=.5ex]
\end{tikzcd}\]
\end{minipage}
\hfill
\centering
\begin{minipage}{0.48\textwidth}
\[\begin{tikzcd}[cells={nodes={draw=teal, thick, circle, minimum size=7mm, inner sep=0, fill=blue!20}},every arrow/.append style={-stealth},row sep=1em, column sep=1em]
 &&&11\ar[d]&6\ar[dl]\\
 12\ar[drrr]&&5\ar[r]\ar[dr]&1&19\ar[ll,bend left]\ar[ll,bend right]\ar[r]\ar[u]\ar[d]\ar[dl]\ar[ul]&9\ar[in=-15,out=15,distance=4ex]\\
 &&&8\ar[in=285,out=255,distance=4ex]\ar[u]&7\ar[ul]
\end{tikzcd}\]
\end{minipage}
\caption{The Ext-quiver for $\Des_F(\D_6)$ when $p=3$. The total numbers of vertices and arrows are 19 and 35 respectively. The following is the Magma output. \\
$[ <2, 3, 1>, <3, 3, 1>, <5, 1, 1>, <5, 8, 1>, <6, 1, 1>, <7, 1, 1>, <8, 1, 1>,
<8, 8, 1>, <9, 9, 1>, <11, 1, 1>, <12, 8, 1>, <13, 2, 1>, <13, 3, 2>, <13, 13,
1>, <14, 2, 1>, <14, 3, 2>, <14, 14, 1>, <15, 4, 1>, <16, 3, 2>, <16, 16, 1>,
<17, 3, 2>, <18, 2, 1>, <18, 3, 1>, <18, 4, 1>, <19, 5, 2>, <19, 6, 1>, <19, 7,
1>, <19, 8, 1>, <19, 9, 1>, <19, 11, 1> ]$}
\label{F:TypeDn6p3}
\end{figure}

\begin{figure}[ht]
\centering
\[\begin{tikzcd}[cells={nodes={draw=teal, thick, circle, minimum size=7mm, inner sep=0, fill=blue!20}},every arrow/.append style={-stealth},row sep=1em, column sep=1em]
    &4&7&22\ar[d,shift left=.5ex]\ar[d,shift right=.5ex]&23\ar[dl,shift left=.5ex]\ar[dl,shift right=.5ex]\\
    21\ar[r]&9&24\ar[l]\ar[r]\ar[rr,bend left]&6&5\ar[r]&1&10\ar[l]\\
    8&16&&20\ar[u,shift left=.5ex]\ar[u,shift right=.5ex]\ar[ur]&19\ar[ul,shift left=.5ex]\ar[ul,shift right=.5ex]\ar[u]\\
    &&&17\ar[d]&12\ar[dl]\\
    18\ar[r]&2&11\ar[in=120,out=150,loop]\ar[l]\ar[r]&3&25\ar[ll,bend left]\ar[ll,bend right]\ar[u]\ar[ul]\ar[d]\ar[dl]\ar[r]&15\\
    &&&14\ar[u]&13\ar[ul]
\end{tikzcd}\]
\caption{The Ext-quiver for $\Des_F(\D_6)$ when $p=5$. The total numbers of vertices and arrows are 25 and 31 respectively. The following is the Magma output. \\
$[ <5, 1, 1>, <10, 1, 1>, <11, 2, 1>, <11, 3, 1>, <11, 11, 1>, <12, 3, 1>, <13,
3, 1>, <14, 3, 1>, <17, 3, 1>, <18, 2, 1>, <19, 5, 1>, <19, 6, 2>, <20, 5, 1>,
<20, 6, 2>, <21, 9, 1>, <22, 6, 2>, <23, 6, 2>, <24, 5, 1>, <24, 6, 1>, <24, 9,
1>, <25, 11, 2>, <25, 12, 1>, <25, 13, 1>, <25, 14, 1>, <25, 15, 1>, <25, 17, 1>
]$}
\label{F:TypeDn6p5}
\end{figure}


\begin{figure}[H]
\centering
\[\begin{tikzcd}[cells={nodes={draw=teal, thick, circle, minimum size=7mm, inner sep=0, fill=blue!20}},every arrow/.append style={-stealth},row sep=1em, column sep=1em]
    1&5&8&23\ar[d,shift left=.5ex]\ar[d,shift right=.5ex]&24\ar[dl,shift left=.5ex]\ar[dl,shift right=.5ex]\\
    22\ar[r]&10&25\ar[l]\ar[r]\ar[rr,bend left]&7&6\ar[r]&2&11\ar[l]\\
    9&17&&21\ar[u,shift left=.5ex]\ar[u,shift right=.5ex]\ar[ur]&20\ar[ul,shift left=.5ex]\ar[ul,shift right=.5ex]\ar[u]\\
    &&&18\ar[d]&13\ar[dl]\\
    19\ar[r]&3&12\ar[l]\ar[r]&4&26\ar[ll,bend left]\ar[ll,bend right]\ar[u]\ar[ul]\ar[d]\ar[dl]\ar[r]&16\\
    &&&15\ar[u]&14\ar[ul]
\end{tikzcd}\]
\caption{The Ext-quiver for $\Des_F(\D_6)$ when $p\geq 7$. The total numbers of vertices and arrows are 26 and 30 respectively. The following is the Magma output. \\
$[ <6, 2, 1>, <11, 2, 1>, <12, 3, 1>, <12, 4, 1>, <13, 4, 1>, <14, 4, 1>, <15, 4,
1>, <18, 4, 1>, <19, 3, 1>, <20, 6, 1>, <20, 7, 2>, <21, 6, 1>, <21, 7, 2>, <22,
10, 1>, <23, 7, 2>, <24, 7, 2>, <25, 6, 1>, <25, 7, 1>, <25, 10, 1>, <26, 12,
2>, <26, 13, 1>, <26, 14, 1>, <26, 15, 1>, <26, 16, 1>, <26, 18, 1> ]$}
\label{F:TypeDn6p0}
\end{figure}

\section{The Ext Quivers of the Descent Algebra $\Des_F(\FF_4)$}\label{Appendix:F}

\begin{figure}[H]
\centering
\[\begin{tikzcd}[cells={nodes={draw=teal, thick, circle, minimum size=7mm, inner sep=0, fill=blue!20}},every arrow/.append style={-stealth},row sep=1em, column sep=1em]
1 \ar[in=15,out=-15,loop] \ar[in=105,out=75,loop] \ar[in=195,out=165,loop] \ar[in=285,out=255,loop]
\end{tikzcd}\]
\caption{The Ext-quiver for $\Des_F(\FF_4)$ when $p=2$}
\label{F:TypeF4p2}
\end{figure}

\begin{figure}[H]
\centering
\[\begin{tikzcd}[cells={nodes={draw=teal, thick, circle, minimum size=7mm, inner sep=0, fill=blue!20}},every arrow/.append style={-stealth},row sep=1em, column sep=1em]
6\ar[r]&4 \ar[in=75,out=105,loop]&
3\ar[r]&5 \ar[in=75,out=105,loop]&
7\ar[in=120,out=150,loop] \ar[in=210,out=240,loop] \ar[r,shift left=.5ex]\ar[r,shift right=.5ex] &1&2
\end{tikzcd}\]
\caption{The Ext-quiver for $\Des_F(\FF_4)$ when $p=3$}
\label{F:TypeF4p3}
\end{figure}

\begin{figure}[H]
\centering
\[\begin{tikzcd}[cells={nodes={draw=teal, thick, circle, minimum size=7mm, inner sep=0, fill=blue!20}},every arrow/.append style={-stealth},row sep=1em, column sep=1em]
8\ar[r]&2&
11\ar[r]&3&
12 \ar[r,shift left=.5ex]\ar[r,shift right=.5ex] &5\\
1&4&6&7&9&10
\end{tikzcd}\]
\caption{The Ext-quiver for $\Des_F(\FF_4)$ when $p\geq 5$}
\label{F:TypeF4p0}
\end{figure}

\section{The Ext Quivers of the Descent Algebra of type $\HH$}\label{Appendix:H}

\begin{figure}[H]
\centering
\[\begin{tikzcd}[cells={nodes={draw=teal, thick, circle, minimum size=7mm, inner sep=0, fill=blue!20}},every arrow/.append style={-stealth},row sep=1em, column sep=1em]
1 \ar[in=15,out=-15,loop] \ar[in=105,out=75,loop] \ar[in=195,out=165,loop] \ar[in=285,out=255,loop]
\end{tikzcd}\]
\caption{The Ext-quiver for $\Des_F(\HH_3)$ when $p=2$}
\label{F:TypeH3p2}
\end{figure}

\begin{figure}[H]
\centering
\[\begin{tikzcd}[cells={nodes={draw=teal, thick, circle, minimum size=7mm, inner sep=0, fill=blue!20}},every arrow/.append style={-stealth},row sep=1em, column sep=1em]
5\ar[r,shift left=.5ex]\ar[r,shift right=.5ex]&1 &4\ar[in=15,out=-15,loop]& 2&3
\end{tikzcd}\]
\caption{The Ext-quiver for $\Des_F(\HH_3)$ when $p=3$}
\label{F:TypeH3p3}
\end{figure}

\begin{figure}[H]
\centering
\[\begin{tikzcd}[cells={nodes={draw=teal, thick, circle, minimum size=7mm, inner sep=0, fill=blue!20}},every arrow/.append style={-stealth},row sep=1em, column sep=1em]
5\ar[r,shift left=.5ex]\ar[r,shift right=.5ex]&1 &2\ar[in=15,out=-15,loop]& 3&4
\end{tikzcd}\]
\caption{The Ext-quiver for $\Des_F(\HH_3)$ when $p=5$}
\label{F:TypeH3p5}
\end{figure}

\begin{figure}[H]
\centering
\[\begin{tikzcd}[cells={nodes={draw=teal, thick, circle, minimum size=7mm, inner sep=0, fill=blue!20}},every arrow/.append style={-stealth},row sep=1em, column sep=1em]
6\ar[r,shift left=.5ex]\ar[r,shift right=.5ex]&2 & 1&3&4&5
\end{tikzcd}\]
\caption{The Ext-quiver for $\Des_F(\HH_3)$ when $p\geq 7$}
\label{F:TypeH3p0}
\end{figure}


\begin{figure}[H]

\centering
\[\begin{tikzcd}[cells={nodes={draw=teal, thick, circle, minimum size=7mm, inner sep=0, fill=blue!20}},every arrow/.append style={-stealth},row sep=1em, column sep=1em]
1 \ar[in=15,out=-15,loop] \ar[in=105,out=75,loop] \ar[in=195,out=165,loop] \ar[in=285,out=255,loop]
\end{tikzcd}\]
\caption{The Ext-quiver for $\Des_F(\HH_4)$ when $p= 2$}
\label{F:TypeH4p2}
\end{figure}

\begin{figure}[H]
\centering
\[\begin{tikzcd}[cells={nodes={draw=teal, thick, circle, minimum size=7mm, inner sep=0, fill=blue!20}},every arrow/.append style={-stealth},row sep=1em, column sep=1em]
3\ar[r,shift left=.5ex]\ar[r,shift right=.5ex]&5 \ar[in=105,out=75,loop]&6\ar[l]&
7\ar[in=120,out=150,loop] \ar[in=210,out=240,loop] \ar[r,shift left=.5ex]\ar[r,shift right=.5ex] &2&1&4
\end{tikzcd}\]
\caption{The Ext-quiver for $\Des_F(\HH_4)$ when $p= 3$}
\label{F:TypeH4p3}
\end{figure}

\begin{figure}[H]
\centering
\[\begin{tikzcd}[cells={nodes={draw=teal, thick, circle, minimum size=7mm, inner sep=0, fill=blue!20}},every arrow/.append style={-stealth},row sep=1em, column sep=1em]
3\ar[r,shift left=.5ex]\ar[r,shift right=.5ex]&4 \ar[in=105,out=75,loop]&6\ar[l]&
2&7\ar[l]\ar[in=75,out=105,loop] \ar[r,shift left=.5ex]\ar[r,shift right=.5ex] &1&5
\end{tikzcd}\]
\caption{The Ext-quiver for $\Des_F(\HH_4)$ when $p= 5$}
\label{F:TypeH4p5}
\end{figure}

\begin{figure}[H]
\centering
\[\begin{tikzcd}[cells={nodes={draw=teal, thick, circle, minimum size=7mm, inner sep=0, fill=blue!20}},every arrow/.append style={-stealth},row sep=1em, column sep=1em]
6\ar[r,shift left=.5ex]\ar[r,shift right=.5ex]&2 &9\ar[l]&
5&10\ar[l] \ar[r,shift left=.5ex]\ar[r,shift right=.5ex] &4\\
&1&3&7&8
\end{tikzcd}\]
\caption{The Ext-quiver for $\Des_F(\HH_4)$ when $p\geq 7$}
\label{F:TypeH4p0}
\end{figure}

\section{The Ext-quiver for $\Des_F(\E_6)$}\label{Appendix:E6}

\vspace{-.8cm}
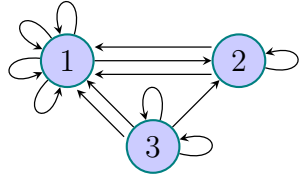
\begin{figure}[H]
\centering
\[\begin{tikzcd}[cells={nodes={draw=teal, thick, circle, minimum size=7mm, inner sep=0, fill=blue!20}},every arrow/.append style={-stealth},row sep=1em, column sep=1em]
1 \ar[in=155,out=125,loop] \ar[in=105,out=75,loop] \ar[in=205,out=175,loop] \ar[in=255,out=225,loop]\ar[rr]&&2\ar[ll,shift left=1ex]\ar[ll,shift right=1ex]\ar[in=15,out=-15,loop]\\
&3\ar[lu,shift left=1ex]\ar[lu,shift right=0ex]\ar[ru]\ar[in=15,out=-15,loop] \ar[in=105,out=75,loop]
\end{tikzcd}\]
\caption{The Ext-quiver for $\Des_F(\E_6)$ when $p=2$. The total numbers of vertices and arrows are 3 and 13 respectively. The following is the Magma output. \\
$[<1, 1, 4>, <1, 2, 1>, <2, 1, 2>, <2, 2, 1>, <3, 1, 2>, <3, 2, 1>, <3, 3, 2>]$}
\label{F:TypeE6p2}
\end{figure}

\vspace{-.8cm}
\begin{figure}[H]
\centering
\[\begin{tikzcd}[cells={nodes={draw=teal, thick, circle, minimum size=7mm, inner sep=0, fill=blue!20}},every arrow/.append style={-stealth},row sep=1em, column sep=1em]
&8\ar[in=165,out=195,loop]\ar[rr]\ar[ld]&&2\ar[rr]\ar[dr,shift left=.5ex]\ar[rrdd,bend left]&&1\ar[dl]\\
4\ar[rrrr,bend left]&&5\ar[ur]\ar[ul]&&7\ar[ld]\ar[in=15,out=-15,loop]\ar[lllu]\\
&6\ar[rr]\ar[ul]\ar[uu]&&3\ar[in=255,out=285,loop]&&9\ar[in=285,out=255,loop]\ar[in=15,out=-15,loop]\ar[lu]\ar[lllu]\ar[llll,bend right]\ar[ll]
\end{tikzcd}\]
\caption{The Ext-quiver for $\Des_F(\E_6)$ when $p=3$. The total numbers of vertices and arrows are 9 and 23 respectively. The following is the Magma output. \\ $[<1, 7, 1>, <2, 1, 1>, <2, 7, 1>, <2, 9, 1>, <3, 3, 1>, <4, 7, 1>, <5, 2, 1>,
<5, 8, 1>, <6, 3, 1>, <6, 4, 1>, <6, 8, 1>, <7, 3, 1>, <7, 7, 1>, <7, 8, 1>, <8,
2, 1>, <8, 4, 1>, <8, 8, 1>, <9, 3, 1>, <9, 5, 1>, <9, 6, 1>, <9, 7, 1>, <9, 9,
2>]$}
\label{F:TypeE6p3}
\end{figure}

\vspace{-.8cm}
\begin{figure}[H]
\centering
\[\begin{tikzcd}[cells={nodes={draw=teal, thick, circle, minimum size=7mm, inner sep=0, fill=blue!20}},every arrow/.append style={-stealth},row sep=1em, column sep=1em]
15\ar[d]\ar[r]\ar[rrdd]&12\ar[in=105,out=75,loop]\ar[dl]\ar[dd]\ar[dr]&5\ar[l]&3\ar[r]&1\\
7&&8\ar[ru]&14\ar[l]\ar[d]&9\ar[ul]\\
13\ar[u]\ar[r]&4&11\ar[l]\ar[r]&6\ar[r]\ar[luu]\ar[uu,bend right]\ar[in=255,out=285,loop]&2&10
\end{tikzcd}\]
\caption{The Ext-quiver for $\Des_F(\E_6)$ when $p=5$. The total numbers of vertices and arrows are 15 and 21 respectively. The following is the Magma output. \\
$[<3, 1, 1>, <5, 12, 1>, <6, 2, 1>, <6, 3, 1>, <6, 5, 1>, <6, 6, 1>, <8, 3, 1>,
<9, 3, 1>, <11, 4, 1>, <11, 6, 1>, <12, 4, 1>, <12, 7, 1>, <12, 8, 1>, <12, 12,
1>, <13, 4, 1>, <13, 7, 1>, <14, 6, 1>, <14, 8, 1>, <15, 7, 1>, <15, 11, 1>,
<15, 12, 1>]$}
\label{F:TypeE6p5}
\end{figure}

\begin{figure}[H]
\centering
\[\begin{tikzcd}[cells={nodes={draw=teal, thick, circle, minimum size=7mm, inner sep=0, fill=blue!20}},every arrow/.append style={-stealth},row sep=1em, column sep=1em]
&2\\
17\ar[d]\ar[r]\ar[rrdd]&14\ar[dl]\ar[dd]\ar[dr]&7\ar[lu]&5\ar[r]&3&1\\
9&&10\ar[ru]&16\ar[l]\ar[d]&11\ar[ul]&\\
15\ar[u]\ar[r]&6&13\ar[l]\ar[r]&8\ar[r]\ar[luu]\ar[uu,bend right]&4&12
\end{tikzcd}\]
\caption{The Ext-quiver for $\Des_F(\E_6)$ when $p\geq 7$. The total numbers of vertices and arrows are 17 and 19 respectively. The following is the Magma output. \\ $[<5, 3, 1>, <7, 2, 1>, <8, 4, 1>, <8, 5, 1>, <8, 7, 1>, <10, 5, 1>, <11, 5, 1>,
<13, 6, 1>, <13, 8, 1>, <14, 6, 1>, <14, 9, 1>, <14, 10, 1>, <15, 6, 1>, <15, 9,
1>, <16, 8, 1>, <16, 10, 1>, <17, 9, 1>, <17, 13, 1>, <17, 14, 1>]$}
\label{F:TypeE6p0}
\end{figure}

\section{The Ext-quiver for $\Des_F(\E_7)$}\label{Appendix:E7}

\vspace{-.5cm}
\begin{figure}[H]
\centering
\[\begin{tikzcd}[cells={nodes={draw=teal, thick, circle, minimum size=7mm, inner sep=0, fill=blue!20}},every arrow/.append style={-stealth},row sep=1em, column sep=1em]
1 \ar[in=0,out=30,loop] \ar[in=40,out=70,loop] \ar[in=80,out=110,loop] \ar[in=120,out=150,loop] \ar[in=160,out=190,loop] \ar[in=200,out=230,loop] \ar[in=240,out=270,loop] \ar[in=280,out=310,loop] \ar[in=320,out=350,loop]
\end{tikzcd}\]
\caption{The Ext-quiver for $\Des_F(\E_7)$ when $p=2$. The total numbers of vertices and arrows are 1 and 9 respectively. The following is the Magma output. \\ $[<1, 1, 9>]$}
\label{F:TypeE7p2}
\end{figure}

\vspace{-1cm}
\begin{figure}[H]
\centering
\[\begin{tikzcd}[cells={nodes={draw=teal, thick, circle, minimum size=7mm, inner sep=0, fill=blue!20}},every arrow/.append style={-stealth}]
&&13\ar[in=120,out=90,loop]\ar[dl,shift left=-.2ex]\ar[dl,shift right=1.2ex]\\
14\ar[r,shift left=.5ex]\ar[r,shift right=.5ex]\ar[dr]\ar[in=75,out=105,loop] &1\ar[in=105,out=75,loop]&15\ar[l]\ar[dl,shift left=.7ex]\ar[dl,shift right=.5ex]\ar[dr,shift left=.5ex] \ar[d]&16\ar[ll,bend right]\ar[dll,shift left=.5ex]\ar[dll,shift right=.5ex]\ar[d]\ar[dl,shift left=.5ex]\ar[dl,shift right=.5ex]\ar[ul]\\
11\ar[ur]\ar[r]\ar[u]&2\ar[u]&12\ar[ul,shift left=.5ex]\ar[ul,shift right=.5ex] \ar[l,shift left=.3ex]\ar[r,shift right=.3ex]\ar[in=285,out=255,loop]&10\ar[ull,shift left=.7ex]\ar[ull,shift right=.3ex]\ar[ll,bend left]\ar[in=285,out=255,loop]\ar[in=15,out=-15,loop]\ar[l,shift left=1ex]\\
6\ar[rdd]&
4\ar[ld,shift right=1ex]\ar[dd,bend left]
&3\ar[dll,shift right=1ex]\ar[ddl,shift left=.5ex]\ar[d]\\
5\ar[in=165,out=195,loop]\ar[dr]&
17\ar[ur]\ar[u,shift left=.5ex]\ar[u,shift right=.5ex]\ar[u,shift left=1.5ex]\ar[u,shift right=1.5ex] \ar[l,shift left=.5ex]\ar[l,shift right=.5ex] \ar[ul,shift left=.5ex]\ar[ul,shift right=.5ex] \ar[d]\ar[r,shift left=.5ex]\ar[r,shift right=.5ex] \ar[in=310,out=280,loop] \ar[in=230,out=260,loop]
&8 \ar[ll,bend right]\ar[ll,bend left] \ar[in=-15,out=15,loop]\\
9\ar[u]&7\ar[in=-15,out=15,loop] \ar[ul,shift left=1ex]\ar[ul,shift left=2ex] \ar[ur,shift right=.5ex]
\end{tikzcd}\]
\caption{The Ext-quiver for $\Des_F(\E_7)$ when $p=3$. The total numbers of vertices and arrows are 17 and 65 respectively. The following is the Magma output.\\
$[<1, 1, 1>, <2, 1, 1>, <3, 5, 1>, <3, 7, 1>, <3, 8, 1>, <4, 5, 1>, <4, 7, 1>,
<5, 5, 1>, <5, 7, 1>, <6, 7, 1>, <7, 5, 2>, <7, 7, 1>, <7, 8, 1>, <8, 5, 2>, <8,
8, 1>, <9, 5, 1>, <10, 1, 2>, <10, 2, 1>, <10, 10, 2>, <10, 12, 1>, <11, 1, 1>,
<11, 2, 1>, <11, 14, 1>, <12, 1, 2>, <12, 2, 1>, <12, 10, 1>, <12, 12, 1>, <13,
1, 2>, <13, 13, 1>, <14, 1, 2>, <14, 2, 1>, <14, 14, 1>, <15, 1, 1>, <15, 2, 2>,
<15, 10, 1>, <15, 12, 1>, <16, 1, 1>, <16, 2, 2>, <16, 10, 1>, <16, 12, 2>, <16,
13, 1>, <17, 3, 1>, <17, 4, 4>, <17, 5, 2>, <17, 6, 2>, <17, 7, 1>, <17, 8, 2>,
<17, 17, 2>]$}
\label{F:TypeE7p3}
\end{figure}

\begin{figure}[H]
\centering
\[\begin{tikzcd}[cells={nodes={draw=teal, thick, circle, minimum size=7mm, inner sep=0, fill=blue!20}},every arrow/.append style={-stealth}]
&4\ar[rrdd,bend right=20]&13\ar[l]\ar[r]\ar[dd,bend left=40]&3&17\ar[l]\\
&21\ar[u]\ar[dr,shift left=.5ex]\ar[dr,shift right=.5ex]&18\ar[ul]\ar[d,shift left=.5ex]\ar[d,shift right=.5ex]&29\ar[r,shift left=.5ex]\ar[r,shift right=.5ex]\ar[lu,shift right=.3ex]\ar[l]\ar[ur,shift left=.5ex]\ar[ur,shift right=.5ex]\ar[dr,shift left=.5ex]\ar[dr,shift right=.5ex]\ar[d,shift left=.5ex]\ar[d,shift right=.5ex]\ar[d,shift left=1.5ex]\ar[d,shift right=1.5ex]&15\ar[ul]\\
&20\ar[r]&2&14\ar[in=15,out=-15,loop]\ar[uu,bend left=50]\ar[l]&19\ar[ll,bend left=25]\ar[ll,bend left=40]\\
1&10\ar[l]&24\ar[in=75,out=105,loop]\ar[l]\ar[d]\ar[dr,shift left=1.5ex]\ar[dr,shift right=-.5ex]&23\ar[dl]\ar[d]\ar[r]&9\\
12\ar[u]&28\ar[u]\ar[d,shift left=.5ex]\ar[d,shift right=.5ex]\ar[r,shift left=.5ex]\ar[r,shift right=.5ex]\ar[l]\ar[rr,bend left]&8\ar[llu]&7&25\ar[l,shift left=.5ex]\ar[l,shift right=.5ex]\\
&6\ar[in=190,out=220,loop]\ar[uul]\ar[uur]&27\ar[l]\ar[uul]\ar[ur]\ar[u,shift left=.5ex]\ar[u,shift right=.5ex]&22\ar[lu]\ar[ll,bend left]\ar[u,shift left=.5ex]\ar[u,shift right=.5ex]&26\ar[ul,shift left=.5ex]\ar[ul,shift right=.5ex]\ar[ull]\\
&5&11&16
\end{tikzcd}\]
\caption{The Ext-quiver for $\Des_F(\E_7)$ when $p=5$. The total numbers of vertices and arrows are 29 and 65 respectively. The following is the Magma output.\\
$[<4, 14, 1>, <6, 1, 1>, <6, 6, 1>, <6, 24, 1>, <8, 1, 1>, <10, 1, 1>, <12, 1,
1>, <13, 2, 1>, <13, 3, 1>, <13, 4, 1>, <14, 2, 1>, <14, 3, 1>, <14, 14, 1>,
<15, 3, 1>, <17, 3, 1>, <18, 2, 2>, <18, 4, 1>, <19, 2, 2>, <20, 2, 1>, <21, 2,
2>, <21, 4, 1>, <22, 6, 1>, <22, 7, 2>, <22, 8, 1>, <23, 7, 1>, <23, 8, 1>, <23,
9, 1>, <24, 7, 2>, <24, 8, 1>, <24, 10, 1>, <24, 24, 1>, <25, 7, 2>, <26, 7, 2>,
<26, 8, 1>, <27, 6, 1>, <27, 7, 1>, <27, 8, 2>, <27, 10, 1>, <28, 6, 2>, <28, 7,
1>, <28, 8, 2>, <28, 10, 1>, <28, 12, 1>, <29, 13, 1>, <29, 14, 4>, <29, 15, 2>,
<29, 17, 2>, <29, 18, 1>, <29, 19, 2>]$}
\label{F:TypeE7p5}
\end{figure}

\begin{figure}[H]
\centering
\[\begin{tikzcd}[cells={nodes={draw=teal, thick, circle, minimum size=7mm, inner sep=0, fill=blue!20}},every arrow/.append style={-stealth}]
1&6\ar[l]&15\ar[l]\ar[r]\ar[dd,bend left=40]&5&19\ar[l]\\
&23\ar[u]\ar[dr,shift left=.5ex]\ar[dr,shift right=.5ex]&20\ar[ul]\ar[d,shift left=.5ex]\ar[d,shift right=.5ex]&31\ar[r,shift left=.5ex]\ar[r,shift right=.5ex]\ar[lu,shift right=.3ex]\ar[l]\ar[ur,shift left=.5ex]\ar[ur,shift right=.5ex]\ar[dr,shift left=.5ex]\ar[dr,shift right=.5ex]\ar[d,shift left=.5ex]\ar[d,shift right=.5ex]\ar[d,shift left=1.5ex]\ar[d,shift right=1.5ex]&17\ar[ul]\\
&22\ar[r]&4&16\ar[uu,bend left=50]\ar[l]&21\ar[ll,bend left=25]\ar[ll,bend left=40]\\
2&12\ar[l]&26\ar[l]\ar[d]\ar[dr,shift left=1.5ex]\ar[dr,shift right=-.5ex]&25\ar[dl]\ar[d]\ar[r]&11\\
14\ar[u]&30\ar[u]\ar[d,shift left=.5ex]\ar[d,shift right=.5ex]\ar[r,shift left=.5ex]\ar[r,shift right=.5ex]\ar[l]\ar[rr,bend left]&10\ar[llu]&9&27\ar[l,shift left=.5ex]\ar[l,shift right=.5ex]\\
3&8\ar[uul]\ar[l]&29\ar[in=-15,out=15,loop]\ar[l]\ar[uul]\ar[ur]\ar[u,shift left=.5ex]\ar[u,shift right=.5ex]&24\ar[lu]\ar[ll,bend left]\ar[u,shift left=.5ex]\ar[u,shift right=.5ex]&28\ar[ul,shift left=.5ex]\ar[ul,shift right=.5ex]\ar[ull]\\
&7&13&18
\end{tikzcd}\]
\caption{The Ext-quiver for $\Des_F(\E_7)$ when $p=7$. The total numbers of vertices and arrows are 31 and 63 respectively. The following is the Magma output.\\
$[<6, 1, 1>, <8, 2, 1>, <8, 3, 1>, <10, 2, 1>, <12, 2, 1>, <14, 2, 1>, <15, 4,
1>, <15, 5, 1>, <15, 6, 1>, <16, 4, 1>, <16, 5, 1>, <17, 5, 1>, <19, 5, 1>, <20,
4, 2>, <20, 6, 1>, <21, 4, 2>, <22, 4, 1>, <23, 4, 2>, <23, 6, 1>, <24, 8, 1>,
<24, 9, 2>, <24, 10, 1>, <25, 9, 1>, <25, 10, 1>, <25, 11, 1>, <26, 9, 2>, <26,
10, 1>, <26, 12, 1>, <27, 9, 2>, <28, 9, 2>, <28, 10, 1>, <29, 8, 1>, <29, 9,
1>, <29, 10, 2>, <29, 12, 1>, <29, 29, 1>, <30, 8, 2>, <30, 9, 1>, <30, 10, 2>,
<30, 12, 1>, <30, 14, 1>, <31, 15, 1>, <31, 16, 4>, <31, 17, 2>, <31, 19, 2>,
<31, 20, 1>, <31, 21, 2>]$}
\label{F:TypeE7p7}
\end{figure}

\begin{figure}[H]
\centering
\[\begin{tikzcd}[cells={nodes={draw=teal, thick, circle, minimum size=7mm, inner sep=0, fill=blue!20}},every arrow/.append style={-stealth}]
2&7\ar[l]&16\ar[l]\ar[r]\ar[dd,bend left=40]&6&20\ar[l]\\
&24\ar[u]\ar[dr,shift left=.5ex]\ar[dr,shift right=.5ex]&21\ar[ul]\ar[d,shift left=.5ex]\ar[d,shift right=.5ex]&32\ar[r,shift left=.5ex]\ar[r,shift right=.5ex]\ar[lu, shift right=.3ex]\ar[l]\ar[ur,shift left=.5ex]\ar[ur,shift right=.5ex]\ar[dr,shift left=.5ex]\ar[dr,shift right=.5ex]\ar[d,shift left=.5ex]\ar[d,shift right=.5ex]\ar[d,shift left=1.5ex]\ar[d,shift right=1.5ex]&18\ar[ul]\\
&23\ar[r]&5&17\ar[uu,bend left=50]\ar[l]&22\ar[ll,bend left=25]\ar[ll,bend left=40]\\
3&13\ar[l]&27\ar[l]\ar[d]\ar[dr,shift left=1.5ex]\ar[dr,shift right=-.5ex]&26\ar[dl]\ar[d]\ar[r]&12\\
15\ar[u]&31\ar[u]\ar[d,shift left=.5ex]\ar[d,shift right=.5ex]\ar[r,shift left=.5ex]\ar[r,shift right=.5ex]\ar[l]\ar[rr,bend left]&11\ar[llu]&10&28\ar[l,shift left=.5ex]\ar[l,shift right=.5ex]\\
4&9\ar[uul]\ar[l]&30\ar[l]\ar[uul]\ar[ur]\ar[u,shift left=.5ex]\ar[u,shift right=.5ex]&25\ar[lu]\ar[ll,bend left]\ar[u,shift left=.5ex]\ar[u,shift right=.5ex]&29\ar[ul,shift left=.5ex]\ar[ul,shift right=.5ex]\ar[ull]\\
1&8&14&19
\end{tikzcd}\]
\caption{The Ext-quiver for $\Des_F(\E_7)$ when $p\geq 11$. The total numbers of vertices and arrows are 32 and 62 respectively. The following is the Magma output.\\
$[<7, 2, 1>, <9, 3, 1>, <9, 4, 1>, <11, 3, 1>, <13, 3, 1>, <15, 3, 1>, <16, 5,
1>, <16, 6, 1>, <16, 7, 1>, <17, 5, 1>, <17, 6, 1>, <18, 6, 1>, <20, 6, 1>, <21,
5, 2>, <21, 7, 1>, <22, 5, 2>, <23, 5, 1>, <24, 5, 2>, <24, 7, 1>, <25, 9, 1>,
<25, 10, 2>, <25, 11, 1>, <26, 10, 1>, <26, 11, 1>, <26, 12, 1>, <27, 10, 2>,
<27, 11, 1>, <27, 13, 1>, <28, 10, 2>, <29, 10, 2>, <29, 11, 1>, <30, 9, 1>,
<30, 10, 1>, <30, 11, 2>, <30, 13, 1>, <31, 9, 2>, <31, 10, 1>, <31, 11, 2>,
<31, 13, 1>, <31, 15, 1>, <32, 16, 1>, <32, 17, 4>, <32, 18, 2>, <32, 20, 2>,
<32, 21, 1>, <32, 22, 2>]$}
\label{F:TypeE7p0}
\end{figure}


\begin{thebibliography}{111}
\bibitem{Aguiar/Bergeron/Nyman:2004} M. Aguiar, N. Bergeron and K. Nyman, The peak algebra and the descent algebras of types $B$ and $D$, Trans. Amer. Math. Soc. 356 (2004), no.7, 2781--2824.
\bibitem{Atkinson/Pfeiffer/vanWilligenburg:2002a} M. D. Atkinson, G. Pfeiffer and S. J. van Willigenburg, The $p$-modular descent algebras, Algebr. Represent. Theory 5 (2002), no. 1, 101--113.
\bibitem{Atkinson/vanWilligenburg:1997a} M. D. Atkinson and S. J. van Willigenburg, The $p$-modular descent algebra of the symmetric group, Bull. London Math. Soc. 29 (1997), no. 4, 407--414.
\bibitem{Auslander/Reiten/Smalo:1995a} M. Auslander, I. Reiten and S. O. Smalø, Representation Theory of Artin Algebras, Cambridge Stud. Adv. Math., 36, Cambridge University Press, Cambridge, 1995.
\bibitem{Benson:1991} D. J. Benson, Representations and Cohomology I: Basic Representation Theory of Finite Groups and Associative Algebras, Cambridge Studies in Advanced Mathematics, vol. 30, Cambridge University Press, 1991, reprinted in paperback, 1998.
\bibitem{Benson/Lim:2025a} D. J. Benson and K. J. Lim, Projective modules and cohomology for integral basic algebras, Int. Math. Res. Not. IMRN 2025, no. 9, Paper No. rnaf105, 23 pp.
\bibitem{Bergeron/Bergeron:1992} F. Bergeron and N. Bergeron, A decomposition of the descent algebra of the hyperoctahedral group, I, J. Algebra 148 (1992), no. 1, 86--97.
\bibitem{Bergeron/Bergeron/Howlett/Taylor:1992} F. Bergeron, N. Bergeron, R. B. Howlett and D. E. Taylor, A decomposition of the descent algebra of a finite Coxeter group, J. Algebraic Combin. 1 (1992), no. 1, 23--44.
\bibitem{Bergeron/Garsia/Reutenauer:1992a}  F. Bergeron, A. Garsia, and C. Reutenauer, Homomorphisms between Solomon's descent algebras, J. Algebra 150 (1992), no. 2, 503--519.
\bibitem{Bergeron/vanWilligenburg:1998} N. Bergeron and S. J. van Willigenburg, A multiplication rule for the descent algebra of type $D$, J. Algebra 206 (1998), no.2, 699--705.
\bibitem{Blessenohl/Laue:1996a} D. Blessenohl and H. Laue, On the descending Loewy series of Solomon's descent algebra,  J. Algebra 180 (1996), no. 3, 698--724.
\bibitem{Blessenohl/Laue:2002a} D. Blessenohl and H. Laue, The module structure of Solomon's descent algebra. J. Aust. Math. Soc. 72 (2002), no. 3, 317--333.
\bibitem{Magma} W. Bosma, J. Cannon and C. Playoust, The Magma algebra system. I. The user language, J. Symbolic Comput., 24 (1997), 235--265.

\bibitem{Cline/Parshall/Scott:1996a} E. Cline, B. Parshall and L. Scott, Stratifying Endomorphism Algebras, Mem. Amer. Math. Soc. 124 (1996), no. 591.
\bibitem{CB} W. Crawley-Boevey, On tame algebras and bocses. Proc. London Math. Soc.(3) 56 (1988), no.3, 451--483.
\bibitem{Dlab/Ringel:1976a} V. Dlab and C. M. Ringel, Indecomposable representations of graphs and algebras, Mem. Amer. Math. Soc. 6 (1976), no. 173.
\bibitem{Donovan/Freislich:1973} P. Donovan and M. R.  Freislich, The Representation Theory of Finite Graphs and Associated Algebras, Carleton Mathematical Lecture Notes, No. 5. Carleton University, Ottawa, ON, 1973. iii+83 pp.
\bibitem{Drozd:1980} Ju. A. Drozd, Tame and wild matrix problems, Representation Theory, II (Proc. Second Internat. Conf., Carleton Univ., Ottawa, Ont., 1979), pp. 242–258, Lecture Notes in Math., 832, Springer, Berlin, 1980.
\bibitem{Erdmann:1990a} K. Erdmann, Blocks of Tame Representation Type and Related Algebras, Lecture Notes in Mathematics, vol. 1428, Springer-Verlag, Berlin, 1990.
\bibitem{Erdmann/Holm/Iyama/Schroer:2004a} K. Erdmann, T. Holm, O. Iyama and J. Schr\"{o}er, Radical embeddings and representation dimension, Adv. Math. 185 (2004), no. 1, 159--177.
\bibitem{Erdmann/Lim:2025} K. Erdmann and K. J. Lim, The representation type of the descent algebras of type $\A$, Sci. China Math. 68 (2025), no. 12, 2827--2846.
\bibitem{Gabriel:1972a} P. Gabriel, Unzerlegbare Darstellungen. I. Manuscripta Math 6, 71--103 (1972). correction, ibid. 6 (1972), 309.
\bibitem{Gabriel:1979a} P. Gabriel, Auslander-Reiten sequences and representation-finite algebras. Representation theory I (Proc. Workshop, Carleton Univ., Ottawa, Ont., 1979), pp. 1--71. Lecture Notes in Math., 831, Springer, Berlin, 1980.
\bibitem{Garsia/Reutenauer:1989a} A. M. Garsia and C. Reutenauer, A decomposition of Solomon's descent algebra, Adv. Math. 77 (1989), no. 2, 189--262.
\bibitem{Geck/Pfeiffer:2000} M. Geck and G. Pfeiffer, Characters of Finite Coxeter Groups and Iwahori–Hecke Algebras, London Math. Soc. Monographs, vol. 21, Oxford University Press, 2000.
\bibitem{Humphreys:1990} J. E. Humphreys, Reflection Groups and Coxeter Groups, Cambridge Stud. Adv. Math., 29 Cambridge University Press, Cambridge, 1990. 


\bibitem{Krause:1997} H. Krause, Stable equivalence preserves representation type, Comment. Math. Helv. 72 (1997), no. 2, 266--284.

\bibitem{Lim:2023a} K. J. Lim, Modular idempotents for the descent algebras of type $\A$ and higher Lie powers and modules, J. Algebra, 628 (2023), 98--162.
\bibitem{Ringel:1975a} C. M. Ringel, The Representation Type of Local Algebras, Lecture Notes in Mathematics, vol. 488, Springer, Berlin, 1975.
\bibitem{Saliola:2008a} F. V. Saliola, On the quiver of the descent algebra, J. Algebra 320 (2008), 3866--3894.
\bibitem{Schocker:2004a} M. Schocker, The descent algebra of the symmetric group, in: Representations of Finite Dimensional Algebras and Related Topics in Lie Theory and Geometry, Fields Institute Communications, 40 (American Mathematical Society, Providence, RI, 2004), 145--161.
\bibitem{Solomon:1976a} L.  Solomon, A Mackey formula in the group ring of a Coxeter group, J. Algebra 41 (1976) 255--268.
\end{thebibliography}
\end{document}